\newtheorem{thm}{Theorem}[subsection]
\newtheorem{prop}[thm]{Proposition}
\newtheorem{lem}[thm]{Lemma}
\newtheorem{cor}[thm]{Corollary}
\theoremstyle{definition}
\newtheorem{df}[thm]{Definition}
\let\ro\mathcal 
\let\c@equation\c@thm
\def\theequation{\thesubsection.\arabic{equation}}
\def\makecompact#1{\g@addto@macro#1{%
  \setlength{\itemsep}{\z@}\setlength{\parsep}{\z@}%
  \setlength{\topsep}{\z@}\setlength{\partopsep}{\z@}%
}}
\def\theequation{\@arabic\c@section.\@arabic\c@subsection.\@arabic\c@thm}
\def\endequation{\eqno \hbox{\@eqnnum}$$\@ignoretrue}
\DeclareMathOperator\Fil{Fil}
\DeclareMathOperator\Gr{Gr}
\DeclareMathOperator\Hom{Hom}
\DeclareMathOperator\Ext{Ext}
\DeclareMathOperator\Ker{Ker}
\DeclareMathOperator\id{id}
\DeclareMathOperator\haut{ht}
\def\Qp{{ℚ_p}}
\def\Qph{{ℚ_{p^h}}}
\def\Qpnr{ℚ_p^{\mathrm{nr}}}
\def\cris{_{\mathrm{cris}}}
\def\dR{_{\mathrm{dR}}}
\def\Cp{{ℂ_p}}
\def\tN{t_{\mathrm{N}}}
\def\tH{t_{\mathrm{H}}}
\def\trans#1{{}^{\mathrm{t}}#1}
\def\Bt{\widetilde{B}}
\def\Be{B_{\mathrm{e}}}
\def\limi{\varinjlim}
\def\BSE{\mathbf{BS^+}}
\def\BS{\mathbf{BS}}
\def\BC{\mathbf{BC}}
\def\BCO{\mathbf{BCO}}
\def\Czu{\mathbf{C}^{\acco{-1,0}}}
\def\defdelim#1#2#3{\def#1##1{{\left#2##1\right#3}}}
\defdelim\cro[]
\def\dual#1{#1^{\smash{\scalebox{.7}[1.4]{%
  \rotatebox{90}{\textnormal{\guilsinglleft}}}}}}
\def\dualdR#1{#1\dR^{⋆}}
\def\application#1#2#3#4{\begin{array}{rcl}%
  \displaystyle#1&\longrightarrow&\displaystyle #2\\%
  \displaystyle#3&\longmapsto&\displaystyle #4\\\end{array}}
\begin{document}
\makecompact\itemize
\makecompact\enumerate

\title{Slope filtration on Banach-Colmez spaces}
\author{Jérôme Plût}
\date{March 1, 2012}
\maketitle

\begin{abstract}
We give a new proof of the ``weakly admissible implies admissible''
theorem of Colmez and Fontaine describing the semi-stable $p$-adic
representations. We study Banach-Colmez spaces, i.e. $p$-adic Banach
spaces with the extra data of a $\Cp$-algebra of analytic functions. The
``weakly admissible'' theorem is then a result of the existence of
\emph{dimension} and \emph{height} functions on these objects.
Furthermore, we show that the subcategory of Banach-Colmez spaces
corresponding to crystalline representations is naturally filtered by the
positive rationals.
\end{abstract}

\section*{Introduction}

The ``weakly admissible implies admissible'' theorem of Colmez and
Fontaine~\cite[Théorème~A]{CF2000} gives an equivalence of categories between
the semi-stable $p$-adic representations and an explicitly described
category of filtered~$(φ,N)$-modules. There exist at least five proofs of
the ``weakly admissible theorem'':
\begin{itemize}
\item the original proof by Colmez and Fontaine~\cite{CF2000} uses the
``almost surjectivity'' of certain formal series with coefficiens
in~$\Cp$;
\item Colmez' proof~\cite{Colmez2002EBDF}, which is a sheafification of the
proof of~\cite{CF2000};
\item Berger~\cite{Berger2008ED} related the filtered $φ$-module
and the $(φ,Γ)$-module attached to a crystalline representation
and used Kedlaya's filtration~\cite{Kedlaya2004monodromy} by
the slopes of the Frobenius;
\item Kisin~\cite{Kisin2006Crystalline} constructed a functor from filtered
$(φ,Γ)$-modules to a category of differential modules with a connexion;
\item most recently, Genestier and V. Lafforgue~\cite{GL2012HP} announced a
generalization of the proof by Kisin.
\end{itemize}
Of the above, our work is most closely related to the proof by
Colmez~\cite{Colmez2002EBDF}: although our proofs are different from his, the
category of Banach-Colmez spaces we introduce here is actually equivalent
to the category of ``Espaces de Banach de dimension finie''.

Following previous work~\cite{banach}, we adopt a geometric point of view
and consider \emph{spectral varieties}, which are topological spaces with
the data of a $\Cp$-Banach algebra of analytic functions and an
homeomorphism to the spectrum of this algebra.
In~\cite{banach}, we gave an analytic interpretation
of the ``fundamental lemma''~\cite[2.1]{CF2000}: we defined
the category of \emph{spectral Banach spaces} as the ``spectral varieties
in $p$-adic Banach spaces''. Finite-dimensional
$\Cp$-{}~and~$\Qp$-vector spaces naturally embed into this category, and
the category of \emph{effective Banach-Colmez spaces} is the full
subcategory of spectral Banach spaces which are analytic extensions of
finite-dimensional $\Cp$-vector spaces by finite-dimensional $\Qp$-vector
spaces. The fundamental lemma~\cite[Prop.~2.4.2]{banach} essentially
states that the dimension of the $\Cp-$ and~$\Qp$-parts of an effective
Banach-Colmez space~$E$ are well-defined. We name them respectively the
\emph{dimension} and \emph{height} of~$E$.

We define here the larger category of \emph{Banach-Colmez spaces} as
quotients of Banach-Colmez spaces by finite-dimensional $\Qp$-vector
spaces. Using the fundamental lemma, we show that this category is
abelian (Theorem~\ref{thm:ab}) and that the dimension and height
functions naturally extend to the larger category
(Corollary~\ref{cor:dh}).

We then describe a functor from the category of $φ$-modules to
Banach-Colmez spaces. Let~$K$ be a finite extension of~$\Qp$ and~$K_0$ be
the unramified subfield of~$K$. For any $φ$-module~$D$ with coefficients
in~$K_0$, we define
\begin{equation}
E(D) = \Hom_{K_0} (D, B^+\cris).
\end{equation}
We show that $E(D)$~has a natural structure as a Banach-Colmez space
(Prop.~\ref{prop:ED-BC}), of
dimension equal to the Newton number of~$D$, and height equal to the rank
of~$D$. Moreover, for any filtration~$\Fil$ on~$D_K = D ⊗_{K_0} K$, we
define
\begin{equation}
M(D,\Fil) = \Hom_{K} (D_K, B^+\dR) / \Hom_{K,\Fil} (D_K, B^+\dR).
\end{equation}
then $M(D,\Fil)$~is a $B^+\dR$-module of length equal to the Hodge number
of~$D$ and thus has a natural Banach-Colmez structure with
dimension~$\tH(D)$. Moreover, the canonical map~$E(D) → M(D,\Fil)$~is
analytic, and the ``weakly admissible'' theorem is then obtained simply
by counting dimensions and heights.

We finally study the essential image of the functor from $φ$-modules to
Banach-Colmez spaces: it is the full subcategory of \emph{oblique
Banach-Colmez spaces}, whose objects are extension of a finite-length
$B^+\dR$-module by a finite-dimensional $\Qp$-vector space. We give a
direct construction of an analogue of Kedlaya's filtration by the slopes
of the Frobenius for certain modules over the Robba ring: namely,
we show that oblique Banach-Colmez spaces have a canonical
Harder-Narasimhan filtration~(Theorem~\ref{thm:fil}) and that the stable
objects of slope~$μ = d/h$ are the Banach-Colmez spaces corresponding to
the isocrystal~$K_0[φ]/(φ^h-p^d)$ (Prop.~\ref{prop:stable}).

\section{Banach-Colmez spaces}

\subsection{Spectral Banach spaces}

We remind from~\cite{banach} that \emph{effective spectral Banach spaces}
are $p$-adic Banach spaces with an analytic structure provided by
an algebra of analytic functions.

Let~$\BSE$ be the category of effective spectral Banach spaces, and
$\Czu\BSE$ be the category of complexes~$V → E$, where~$V$~is a
finite-dimensional $\Qp$-vector space, $E$~is an effective spectral
Banach space, and the map~$V → E$~is injective. Then the
family~$\mathbf{Qis}$ of 
morphisms~$f: (V → E) → (V' → E')$ of~$\Czu\BSE$ such that the sequence
\[ 0 → V → E ⊕ V' → E' → 0 \]
is exact is a left multiplicative system
in the sense of~\cite[II.2.1]{Verdier1996}.

\begin{df}
The category~$\BS$ of \emph{spectral Banach spaces} is the localization
of the category~$\Czu\BSE$ relatively to the multiplicative
system~$\mathbf{Qis}$.
\end{df}

The natural functor from~$\BSE$ to~$\BS$, sending the
effective spectral Banach space~$E$ to the map~$(0 → E)$,
is fully faithful.

For any spectral Banach space~$X = (V →^d E)$, let~$H^0(X)$ be the
$p$-adic Banach space~$E / d(V)$. This construction is functorial and
defines a faithful functor from the category~$\BS$ to that of $p$-adic
Banach spaces.
Moreover, let~$X = (V →^d E)$, $X' = (V →^d E')$ be two
spectral Banach spaces; then a continuous linear map~$f: H^0(X) →
H^0(X')$ is in the image of~$H^0$ if, and only if, the \emph{graph}
of~$f$, i.e. the fibre product
\( E ×_{H^{0}(X')} E' \), is an analytic sub-space of the effective
spectral Banach space~$E × E'$.

Thereafter, by abuse of language, we shall identify the category~$\BS$
with its essential image by the faithful functor~$H^0$, i.e. we see the
spectral Banach spaces as $p$-adic Banach spaces. We call \emph{analytic}
the continous linear maps between spectral Banach spaces that are
morphisms of the category~$\BS$.

%
%

\begin{df}
A spectral Banach space~$X$ is \emph{étale} if there exists a
representative~$V → E$ of~$X$ such that $E$~is étale; it is
\emph{connected} if there exists a representative such that $E$~is
connected.
\end{df}

In particular, these definitions are compatible with the similar
definitions on the full subcategory of effective spectral Banach spaces.
Moreover, if $X$~is étale, then for all its representatives~$V → E$,
$E$~is étale; moreover, it is an effective spectral Banach space.

From the connected-étale sequence for effective spectral Banach spaces,
we deduce the analogous result for spectral Banach spaces.

\begin{prop}
Any analytic morphism from an étale spectral Banach space to a connected
one is zero.

Let~$X$ be a spectral Banach space. Then $X$~has a largest
connected sub-space~$X^0$, a largest étale quotient~$π_0(X)$, and the
sequence
\[ 0 → X^0 → X → π_0(X) → 0 \]
is exact. Moreover, this sequence is (non-canonically) split.
\end{prop}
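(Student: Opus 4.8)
The plan is to reduce every assertion to the already-established connected-étale sequence for \emph{effective} spectral Banach spaces, transporting it along a chosen representative $(V \to E)$ of $X$. The sentence preceding the statement signals exactly this strategy, and the graph description of morphisms in $H^0$ is the tool that lets one pass between $\BS$ and $\BSE$.

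First I would prove the vanishing statement. If $X$ is étale it is effective, so I may take a representative $(0 \to E)$ with $E$ étale, and if $Y$ is connected I take $(W \to F)$ with $F$ connected. By the graph criterion, an analytic morphism $f\colon X \to Y$ is a continuous linear map $H^0(X) = E \to H^0(Y) = F/W$ whose graph $\Gamma = E \times_{H^0(Y)} F$ is an analytic subspace of $E \times F$. The first projection $\Gamma \to E$ is surjective with kernel the finite-dimensional $\Qp$-vector space $W$, so $\Gamma$ is an extension of the étale space $E$ by an étale space, hence itself étale and effective. The second projection $p\colon \Gamma \to F$ is then an analytic morphism from an étale to a connected \emph{effective} spectral Banach space, so $p = 0$ by the effective connected-étale theory; since $p(e,g) = g$ vanishes identically while $\Gamma \to E$ is onto, we get $f = 0$. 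The point needing care here is the closure of the class of étale effective spaces under extension by a $\Qp$-vector space, which I would extract from the effective theory.

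Next I construct the sequence. Fix a representative $(V \to E)$ and apply the effective connected-étale sequence to $E$, choosing a splitting $E = E^0 \oplus \pi_0(E)$ with $E^0$ connected and $\pi_0(E)$ étale. Composing $V \hookrightarrow E$ with the projection onto the connected summand gives an analytic map from the étale space $V$ to the connected space $E^0$, which vanishes by the first part; hence $V \subseteq \pi_0(E)$, and $V \to \pi_0(E)$ is still injective. I then set $X^0 = (0 \to E^0)$, which is connected, and $\pi_0(X) = (V \to \pi_0(E))$, which is étale because this representative has étale $E$-part. The inclusion of the summand $E^0 \hookrightarrow E$ (with $0 \to V$) and the projection $E \twoheadrightarrow \pi_0(E)$ (with $\id_V$) assemble into $0 \to X^0 \to X \to \pi_0(X) \to 0$, whose exactness is checked on $H^0$; the inclusion of the complementary summand $\pi_0(E) \hookrightarrow E$ furnishes a section $\pi_0(X) \to X$, and the projection onto $E^0$ a retraction, so in fact $X \cong X^0 \oplus \pi_0(X)$.

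Finally I would transfer extremality and discuss canonicity. The maximality of $X^0$ among connected subspaces and of $\pi_0(X)$ among étale quotients is inherited through the decomposition $X \cong X^0 \oplus \pi_0(X)$ from the corresponding maximality of $E^0$ and $\pi_0(E)$ in $\BSE$; I would also verify independence of the chosen representative, which follows because any two are linked by a morphism of $\mathbf{Qis}$ and the assignment $E \mapsto (E^0,\pi_0(E))$ is compatible with such morphisms. The splitting is only non-canonical because it rests on the choice of splitting of the effective connected-étale sequence of $E$. I expect the main obstacle to be the first step, namely identifying the graph with an \emph{effective} and \emph{étale} spectral Banach space so that the effective vanishing theorem applies; once that is in place, the remaining steps are bookkeeping transported from $\BSE$ to $\BS$.
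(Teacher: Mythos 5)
Your proposal is correct and takes exactly the route the paper intends: the paper gives no proof of this proposition beyond the remark that it is deduced from the connected-étale sequence for effective spectral Banach spaces, and your transfer along a representative via the graph criterion is precisely that deduction made explicit (including the correctly flagged input from the effective theory that graphs of maps out of étale spaces are étale). The one point to nail down when writing it up is that the maximality of $X^0$ and of $\pi_0(X)$ needs the vanishing $\Hom_{\BS}(Y, Z) = 0$ for $Y$ connected and $Z$ étale --- the opposite direction from your first step --- which follows by the same graph mechanism (the graph of a morphism out of a connected space is itself connected, since its étale part injects through the first projection into the connected space $F$ and hence vanishes by your first part) together with the ``largest étale quotient'' property of $\pi_0$ in the effective theory.
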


\subsection{The abelian category of Banach-Colmez spaces}

\begin{df}
The category~$\BC$ of \emph{Banach-Colmez spaces} is the full subcategory
of $\BS$ whose objects have one representative of the form~$V → E$, where
$V$~is an étale spectral Banach space, and $E$~is an effective
Banach-Colmez space.
\end{df}

If $X$~is a Banach-Colmez space, then for any representative~$V → E$
of~$X$, $E$~is an effective Banach-Colmez space.

Let~$X$~be a Banach-Colmez space, and~$V_{-} → E$~be a representative
of~$X$. Then there exists a presentation of~$E$, of the form~$0 → V_{+} →
E → L → 0$, where $V_{+}$~is a finite-dimensional $\Qp$-vector space,
and $L$~is a finite-dimensional $\Cp$-vector space. We then say that the
diagram
\begin{equation}
0 → V_{-} → (V_{+} → E → L) → X → 0
\end{equation}
is a \emph{presentation} of~$X$. Given such a presentation, one defines
the \emph{dimension} of the presentation as being~$\dim_{\Cp} X$, and the
\emph{height} of the presentation as~$\dim_{\Qp} V_{+} - \dim_{\Qp}
V_{-}$.

From the fundamental lemma for effective Banach-Colmez
spaces~\cite[2.4.2]{banach}, we
deduce the following:
%
%

\begin{prop}\label{prop:dh-ker}
Let~$f: X → X'$ be a surjective morphism of Banach-Colmez spaces,
where~$X$ has a presentation of dimension~$d$ and height~$h$, and
$X'$~has a presentation of dimension~$d'$ and height~$h'$.

Then $X'' = \Ker f$~is a Banach-Colmez space, and it has a presentation
of dimension~$d'' = d - d'$ and height~$h'' = h - h'$.
\end{prop}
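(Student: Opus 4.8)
The plan is to reduce everything to the fundamental lemma for *effective* Banach-Colmez spaces, which guarantees that the dimension and height of an effective Banach-Colmez space are well-defined invariants. So the first thing I would do is choose compatible presentations. Start with presentations $0 → V_{-} → (V_{+} → E → L) → X → 0$ and $0 → V_{-}' → (V_{+}' → E' → L') → X' → 0$ realizing the stated dimensions and heights, and then lift the analytic surjection $f\colon X → X'$ to the level of the representing complexes $V_{-} → E$ and $V_{-}' → E'$. The point is that a morphism in the localized category $\BS$ is represented by a roof, so after passing to a common refinement (using that $\mathbf{Qis}$ is a left multiplicative system) I may assume $f$ comes from an honest analytic map $E → E'$ compatible with the $\Qp$-structures.

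\medskip

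Next I would identify the kernel concretely. The kernel $X'' = \Ker f$ should be computed as a fibre product / equalizer at the level of effective spectral Banach spaces: the underlying object of $X''$ is built from the analytic preimage of $d'(V_{-}')$ under the lifted map, which by the graph criterion for analyticity (spelled out after the definition of $H^0$) is again an analytic sub-space. The main technical content is to check that this sub-space is in fact an \emph{effective Banach-Colmez space}, i.e. that it is an analytic extension of a finite-dimensional $\Cp$-vector space by a finite-dimensional $\Qp$-vector space, and hence that $X''$ lies in $\BC$; this is where I expect the real work to be, since one must produce an explicit presentation $0 → V_{-}'' → (V_{+}'' → E'' → L'') → X'' → 0$ rather than merely knowing the kernel exists abstractly.

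\medskip

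Once a presentation of $X''$ is in hand, the dimension and height count should follow from additivity. The strategy is to splice the three presentations into a single exact diagram and read off the alternating sum of the $\Cp$-dimensions and of the $\Qp$-dimensions. Concretely, the exactness of $0 → X'' → X → X' → 0$ together with the fundamental lemma forces $\dim_{\Cp}$ to be additive, giving $d'' = d - d'$, and forces the height — the difference $\dim_{\Qp} V_{+} - \dim_{\Qp} V_{-}$ — to be additive as well, giving $h'' = h - h'$. The key input here is precisely~\cite[2.4.2]{banach}: it is what rules out the possibility that the invariants depend on the chosen presentation, so that the naive additivity of dimensions of $\Cp$- and $\Qp$-vector spaces in an exact sequence is legitimate.

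\medskip

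The hard part will be the analyticity bookkeeping in the middle step: lifting $f$ to the representing complexes and then verifying that the equalizer defining $X''$ is genuinely an analytic sub-space carrying an effective Banach-Colmez structure, rather than just a closed Banach subspace. I would handle this by invoking the graph criterion and the connected-étale formalism from the previous proposition, which let me separate the étale ($\Qp$-like) and connected ($\Cp$-like) parts and control each separately. Everything after that — the dimension and height identities — is a formal consequence of additivity and the well-definedness supplied by the fundamental lemma.
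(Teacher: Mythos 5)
There is a genuine gap, and it sits exactly where you placed your ``real work'' disclaimer. The fundamental lemma of~\cite[2.4.2]{banach}, in the form this proposition actually needs, is not a well-definedness statement: it says that a \emph{non-zero} analytic map~$g$ from a connected, effective, one-dimensional Banach-Colmez space to~$\Cp$ is \emph{surjective}, and that $\Ker g$~is a finite-dimensional $\Qp$-vector space of dimension equal to the height. To use it on a general surjection~$f\colon X \to X'$ you must first reduce to that special case, and your proposal contains no mechanism for doing so. The paper's proof is precisely this reduction chain: replace~$f$ by~$E \times_{X'} E' \to E'$ to make the target effective; decompose~$L'$ into lines and pull back along~$X' \times_{L'} L'_i$ to reduce to~$X' = L' = \Cp$; decompose~$L$ into lines to reduce to~$\dim L = 1$; then split~$E \cong \pi_0(E) \times E^0$ and apply the fundamental lemma to the composite~$g\colon E^0 \to \Cp$, which is non-zero because $f$~is surjective. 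Forming the kernel as a fibre product and invoking the graph criterion, as you suggest, only shows that $X''$~is an analytic subspace; it does not show that it is an extension of a finite-dimensional $\Cp$-vector space by a finite-dimensional $\Qp$-vector space, nor does it compute either dimension --- that is what the surjectivity-plus-kernel statement delivers after the reduction.

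Your final step is also logically backwards. Additivity of dimension and height is not ``a formal consequence of well-definedness'': well-definedness for general Banach-Colmez spaces is Corollary~\ref{cor:dh}, which the paper deduces \emph{from} Proposition~\ref{prop:dh-ker} (applied to the identity morphism), so invoking it here would be circular; and well-definedness for \emph{effective} spaces alone does not let you read dimensions off the sequence~$0 \to X'' \to X \to X' \to 0$, because that would require presentations of the three spaces fitting into a commutative exact diagram --- and producing such a presentation of~$X''$ with the correct $\Cp$- and $\Qp$-parts is the entire content of the proposition. In short, your plan correctly identifies the shape of the statement but defers the only genuinely nontrivial step, and the tool you reserve for the final bookkeeping (the fundamental lemma read as invariance of dimension and height) is not the tool that does the work (the fundamental lemma read as a surjectivity and kernel computation, made applicable through the reduction to~$X' = \Cp$ and~$L = \Cp$).
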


\begin{proof}
The fundamental lemma corresponds to the case where~$X$~is effective of
dimension one and~$X' = \Cp$. From this we deduce the general case as
follows.

Let~$V_{-} → (V_{+} → E → L) → X$ and~$V'_{-} → (V'_{+} → E' → L') → X'$
be presentations of~$X$ and~$X'$. By replacing~$f: X → X'$ by the map~$E
×_{X'} E' → E'$, we may assume that~$X'$ is effective and thus~$V'_{-} =
0$. 

Let~$L' = ⨁ L'_i$ be a decomposition of the $\Cp$-vector space~$L'$ as a
direct sum of $\Cp$-vector subspaces.
Then~$X'_i = X' ×_{L'} L'_i$ is an effective Banach-Colmez space and has
the presentation~$0 → V' → X'_i → L'_i → 0$. Let~$X_i = X ×_{L'} L'_i =
X ×_{X'} X'_i$: then the result is true for the map~$f: X → X'$ if, and
only if, it is true for all maps~$f_i: X_i → X'_i$.
We may therefore assume that~$L' = \Cp$. Moreover, replacing~$f$ by the
composed map~$X →^f X' → L'$ preserves the result, and we may thus
further assume that~$X' = L' = \Cp$.

Let now~$L = ⨁ L_i$ be a decomposition of~$L$, and let~$E_i = E ×_{L}
L_i$ and~$V_{-,i} = V_{-} ×_{E} E_i$. Then~$E_i$ is an effective
Banach-Colmez space, having the presentation~$0 → V_{+} → E_i → L_i → 0$,
and~$V_{-,i} ↪ E_i$ defines a Banach-Colmez space~$X_i$, as well as a
morphism~$X_i → X$. The result
for~$X → X'$ is then equivalent to the result for all composed maps~$X_i
→ X'$. We may therefore assume that~$L = \Cp$.

Let~$π_0(E)$ and~$E^0$ be the étale and connected components of~$E$; then
$E$~is isomorphic to~$π_0(E) × E^0$, and $E^0$~is a connected, effective
Banach-Colmez space of dimension one. The composed map~$g: E^0 ↪ E → X
→^f X' = \Cp$ is then analytic; since $f(X)$~is not étale by hypothesis,
$g$~is non-zero. By the fundamental lemma for the connected effective
Banach-Colmez space~$E$, $g$~is surjective and $\Ker g$~is a $\Qp$-vector
space of dimension~$\haut E^0 = \dim_{\Qp} V_{+} - \dim_{\Qp} π_0(V)$.
Therefore, $f$~is surjective, and its kernel has dimension~$\dim_{\Qp}
π_0(E) - \dim_{\Qp} V_{-} + \dim_{\Qp} \Ker g = \dim_{\Qp} V_{+} -
\dim_{\Qp} V_{-}$.
\end{proof}

\begin{cor}\label{cor:dh}
Let~$X$ be a Banach-Colmez space.
The dimension and height of any two presentations of~$X$ coincide.
\end{cor}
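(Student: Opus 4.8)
The plan is to deduce the corollary formally from Proposition~\ref{prop:dh-ker}, applied to the identity morphism, once I have pinned down the dimension and height of the zero object. The point is that Proposition~\ref{prop:dh-ker} already expresses the additivity of dimension and height across kernels, so all that remains is to exploit it for the trivial short exact sequence associated with~$\id_X$.

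First I would determine the possible invariants of the zero space. Suppose $0 → V_{-} → (V_{+} → E → L) → 0 → 0$ is a presentation of the zero Banach-Colmez space. Since the functor~$H^0$ is faithful, the vanishing of the object means~$E / d(V_{-}) = 0$; as the structural map~$V_{-} → E$ is injective, it is therefore an analytic isomorphism~$V_{-} \xrightarrow{\sim} E$. Now~$V_{-}$ is étale, so writing~$E = E^0 × π_0(E)$ for its connected-étale decomposition, the composite of this isomorphism with the projection onto the connected part~$E^0$ is an analytic morphism from the étale space~$V_{-}$ to the connected space~$E^0$, hence zero by the proposition on the connected-étale sequence. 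As the isomorphism is surjective, this forces~$E^0 = 0$, so~$E$ is étale and its $\Cp$-part~$L$ vanishes; thus the presentation has dimension~$\dim_{\Cp} L = 0$. The exact sequence~$0 → V_{+} → E → L → 0$ then reduces to an isomorphism~$V_{+} \xrightarrow{\sim} E$, and combining it with~$V_{-} \xrightarrow{\sim} E$ gives~$\dim_{\Qp} V_{+} = \dim_{\Qp} V_{-}$, so the height is~$0$ as well.

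Next I would run the main argument. Let~$X$ carry two presentations, of dimensions and heights~$(d_1, h_1)$ and~$(d_2, h_2)$ respectively. I apply Proposition~\ref{prop:dh-ker} to the identity map~$\id_X : X → X$, viewing the source through the first presentation and the target through the second. This map is surjective with~$\Ker \id_X = 0$, so the proposition produces a presentation of the zero space of dimension~$d_1 - d_2$ and height~$h_1 - h_2$. By the previous step these integers vanish, whence~$d_1 = d_2$ and~$h_1 = h_2$, which is exactly the assertion.

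The only genuine content beyond Proposition~\ref{prop:dh-ker} lies in the first step, and the main obstacle there is to justify that an isomorphism of~$E$ onto an étale space annihilates its connected component. This is precisely where the connected-étale structure is needed: the decomposition~$E = E^0 × π_0(E)$ together with the vanishing of every analytic morphism from an étale space to a connected one makes the argument go through, and the rest is a formal application of the additivity already established.
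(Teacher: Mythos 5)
Your proof is correct and takes essentially the same route as the paper: the paper's entire proof is the one line ``Apply Proposition~\ref{prop:dh-ker} to the identity morphism of~$X$'', which is exactly your main step, reading the source through one presentation and the target through the other. Your preliminary verification that every presentation of the zero space has dimension and height zero (via the connected-étale decomposition and the vanishing of analytic maps from étale to connected spaces) is precisely the detail the paper leaves implicit, and you justify it correctly.
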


\begin{proof}
Apply~\ref{prop:dh-ker} to the identity morphism of~$X$.
\end{proof}

We call these the \emph{dimension} and \emph{height} of~$X$, and we
write~$\dim X$ and~$\haut X$.

\begin{prop}\label{prop:quo}
Let~$X$ be a Banach-Colmez space and~$X'$ be a sub-Banach-Colmez space of~$X$.
There exists a quotient Banach-Colmez space~$X'' = X/X'$;
moreover, $\dim X = \dim X' + \dim X''$ and~$\haut X = \haut X' + \haut X''$.
\end{prop}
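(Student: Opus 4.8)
The plan is to reduce the whole statement to the construction of the quotient object $X''$ together with a surjection $q\colon X \to X''$ whose kernel is $X'$; once this is in hand, the additivity of dimension and height is formal. Indeed, applying Proposition~\ref{prop:dh-ker} to the surjection $q$ shows that its kernel, which is $X'$, is a Banach-Colmez space with $\dim X' = \dim X - \dim X''$ and $\haut X' = \haut X - \haut X''$; rearranging gives exactly the asserted relations $\dim X = \dim X' + \dim X''$ and $\haut X = \haut X' + \haut X''$. Thus the entire content lies in producing $X''$.

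To construct $X''$, I would first arrange compatible representatives. Write $X$ in the form $(V_- \to E)$ with $V_-$ a finite-dimensional $\Qp$-vector space and $E$ an effective Banach-Colmez space. Using that $\BS$ is the localization of $\Czu\BSE$ at the left multiplicative system $\mathbf{Qis}$, I would lift the monomorphism $X' \hookrightarrow X$ to an honest morphism of complexes $(V'_- \to E') \to (V_- \to E)$ representing it, and exploit the mono hypothesis to replace the representing roof by one in which $E' \to E$ is a genuine analytic subobject inclusion of effective spectral Banach spaces, compatibly with $V'_- \hookrightarrow V_-$ and with $E' \cap V_- = V'_-$. This passage from the abstract monomorphism in the localized category to a concrete closed inclusion of effective objects is where the structure of $\mathbf{Qis}$ must really be used, and I expect it to be the main obstacle.

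Granting such a representative, I would form the quotient at the level of effective spaces: set $E'' = E/E'$, the cokernel of $E' \hookrightarrow E$, and let $V''_-$ be the image of $V_-$ in $E''$. A short diagram chase (a third-isomorphism argument) identifies $X/X'$ with $E/(E' + V_-)$, that is, with the complex $(V''_- \to E'')$, which is manifestly of the admissible form \emph{étale $\to$ effective Banach-Colmez} as soon as $E''$ is itself an effective Banach-Colmez space. Verifying this last point is the one genuinely analytic ingredient: I would show that the quotient of an effective Banach-Colmez space by a sub-effective-Banach-Colmez space is again effective, using the connected–étale decomposition $E \simeq \pi_0(E) \times E^0$ to separate the finite-dimensional $\Qp$- and $\Cp$-contributions and to control the quotient on each factor.

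It remains to check that $X'' = (V''_- \to E'')$ so defined satisfies the universal property of the cokernel and that $\Ker q = X'$, which closes the circle and licenses the application of Proposition~\ref{prop:dh-ker} above. The numerical assertions then require no further computation: they are purely a consequence of the additivity of dimension and height for kernels of surjections already established in~\ref{prop:dh-ker}.
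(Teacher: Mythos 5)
Your reduction of the numerical claims to Proposition~\ref{prop:dh-ker} is sound and matches the paper's logic: once a surjection $q\colon X \to X''$ with kernel $X'$ exists, additivity of dimension and height follows formally from \ref{prop:dh-ker} together with Corollary~\ref{cor:dh}. The genuine gap is in the lemma you isolate as ``the one genuinely analytic ingredient'': it is \emph{false} that the quotient of an effective Banach-Colmez space by a sub-effective-Banach-Colmez space is again effective. Take $E = \Cp$ and $E' = \Qp$, both effective (take $(V_+, L) = (0, \Cp)$ and $(\Qp, 0)$ respectively). The quotient $\Cp/\Qp$ has a presentation of dimension $1$ and height $-1$, whereas every effective space has a presentation of height $\dim_{\Qp} V_+ \geq 0$; by Corollary~\ref{cor:dh} these numbers are invariants, so $\Cp/\Qp$ is isomorphic to no effective space. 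This is not a marginal case: $\Cp/\Qp$ is the prototypical object for which the category $\BC$ was enlarged beyond $\BSE$ in the first place, and the correct representative of this quotient is the complex $(\Qp \to \Cp)$ --- that is, the subspace $E'$ must be moved into the degree $-1$ slot of the representing complex, not divided out of $E$.

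This asymmetry between the étale and connected parts of $E'$ is exactly what your outline misses and what the paper's proof supplies. Only the connected part of $E'$ can be divided out of $E$ while remaining in the effective world (and even that requires the fundamental lemma); the étale part must be absorbed into $V_-$. Concretely, the paper reduces, as in the proof of \ref{prop:dh-ker}, to the case $X = \Cp$, and then argues by the étale/connected dichotomy: if $X'$ is étale, the quotient $\Cp/X'$ is by definition the Banach-Colmez space represented by the complex $(X' \to \Cp)$; if $X'$ is connected and non-zero, the fundamental lemma shows that the composite $E' \to X' \to \Cp$ is surjective, so that $\Cp/X' = 0$. Your diagram chase, which delegates the whole construction to the false effectivity lemma, cannot be repaired without reorganizing it along these lines.
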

\begin{proof}
Let~$V_{-} → (V_{+} → E → L) → X$ and~$V'_{-} → (V'_{+} → E' → L') → X'$
be presentations of~$X$ and~$X'$. As in the proof of
Prop.~\ref{prop:dh-ker}, we may reduce to the case where~$X = \Cp$.

If $X'$~is étale, then the quotient~$\Cp/X'$ exists and the result is true.
If on the other hand $X'$~is connected and non-zero,
then let~$0 → V' → E' → X' → 0$ be a presentation of~$X'$, with $E$~connected;
by the fundamental lemma for~$E'$,
the composed morphism~$E' → X' → \Cp$ is surjective.
In this case, the quotient~$\Cp/X'$ is zero.
\end{proof}

\begin{thm}\label{thm:ab}
The category of Banach-Colmez spaces is abelian. Moreover, there exist
unique functions \emph{dimension} and \emph{height}, additive on short
exact sequences, and such that
\[ \dim \Cp = 1, \quad \haut \Cp = 0; \qquad \dim \Qp = 0,\quad \haut \Qp = 1. \]
\end{thm}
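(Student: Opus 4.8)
\emph{Plan.} The aim is to deduce the abelian structure of $\BC$ from the additivity results already in hand, and then to read off the dimension and height functions. First I would check additivity: the ambient category $\BS$ is additive, being the localization of the additive category $\Czu\BSE$ at a multiplicative system compatible with its additive structure, and $\BC$ is a full subcategory containing $0$ and stable under finite direct sums, since the termwise direct sum of two presentations is a presentation of the direct sum. It then remains to produce kernels and cokernels for every morphism and to show that monomorphisms and epimorphisms are normal.

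The core step is an epi-mono factorization: every morphism $f\colon X \to X'$ of $\BC$ should factor as $X \twoheadrightarrow I \hookrightarrow X'$, with $I$ a sub-Banach-Colmez space of $X'$. Granting this, $\Coker f = X'/I$ exists by Prop.~\ref{prop:quo}, while $\Ker f = \Ker(X \twoheadrightarrow I)$ is a Banach-Colmez space by Prop.~\ref{prop:dh-ker}; both propositions also record the dimensions and heights. To construct $I$, I would follow the reduction scheme of the proof of Prop.~\ref{prop:dh-ker}: decomposing the $\Cp$-parts $L'$ and $L$ and invoking the connected-étale splitting, one reduces to the case where $X'$ is a building block $\Cp$ or $\Qp$. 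For the étale block $\Qp$ the image is a sub-$\Qp$-vector space; for the connected block $\Cp$, a map from the étale part vanishes while the fundamental lemma forces a nonzero map from the connected part to be surjective, so the image is $0$ or $\Cp$. In each case $I$ is a Banach-Colmez subspace.

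For normality I would argue by counting. If $Z \subseteq X'$ is a sub-Banach-Colmez space with $\dim Z = \dim X'$ and $\haut Z = \haut X'$, then by additivity (Prop.~\ref{prop:quo}) the quotient $X'/Z$ has dimension and height $0$; a presentation of such a quotient has $L = 0$ and $\dim_{\Qp} V_+ = \dim_{\Qp} V_-$, so its underlying Banach space $H^0$ vanishes and, $H^0$ being faithful, $X'/Z = 0$, that is $Z = X'$. Applying this to the canonical inclusion $Y \hookrightarrow \Ker(X' \to X'/Y)$ attached to a monomorphism $Y \hookrightarrow X'$ (and dually) shows that every monomorphism is the kernel of its cokernel and every epimorphism is the cokernel of its kernel. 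With the previous step this proves $\BC$ abelian.

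Finally, for the dimension and height functions, Cor.~\ref{cor:dh} makes $\dim$ and $\haut$ well defined on objects, and Props.~\ref{prop:quo} and~\ref{prop:dh-ker} make them additive on short exact sequences; the prescribed values on $\Cp$ and $\Qp$ are read off from the evident presentations. Uniqueness follows because, through its presentation, every object sits in exact sequences assembled from copies of $\Cp$ (the part $L$) and $\Qp$ (the parts $V_\pm$), so additivity together with the values on $\Cp$ and $\Qp$ determines the functions. I expect the construction of the image $I$ as a genuine analytic subspace to be the main obstacle: in the localized, analytic setting a set-theoretic image need not be an analytic subspace, so it is essential to control it through the connected-étale decomposition and the fundamental lemma rather than naively.
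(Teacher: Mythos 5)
Your overall assembly is the one the paper intends: the paper gives no separate argument for Theorem~\ref{thm:ab}, which is meant to follow from Prop.~\ref{prop:dh-ker} (kernels of surjections), Prop.~\ref{prop:quo} (cokernels of inclusions) and Cor.~\ref{cor:dh}; your additivity, normality-by-counting and existence/uniqueness paragraphs carry this out correctly. The genuine gap is in the step you yourself single out as the crux, namely the construction of the image $I = f(X)$ as a sub-Banach-Colmez space: the reduction scheme of the proof of Prop.~\ref{prop:dh-ker} does not transport this problem to the building blocks $\Cp$ and $\Qp$. That scheme (decompose $L' = \bigoplus_i L'_i$, put $X'_i = X' \times_{L'} L'_i$, $X_i = X \times_{X'} X'_i$, $f_i \colon X_i \to X'_i$) is sound for kernels of surjections, because $\Ker f_i = \Ker f$ for every $i$ and surjectivity is inherited by the $f_i$; but the image of $f$ is simply not determined by the images of the $f_i$. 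Concretely, take $X = \Cp$, $X' = \Cp \oplus \Cp$ and $f$ the diagonal map: then $X_i = f^{-1}(X'_i) = 0$, so both $f_i$ have image $0$, while $f(X)$ is the diagonal. The conclusion is still true --- the diagonal is a sub-Banach-Colmez space --- but your reduction only ever sees the zero images and cannot prove it. Likewise, the other reduction move of Prop.~\ref{prop:dh-ker}, replacing $f$ by the composite $X \to X' \to L'$, is harmless only when $f$ is surjective, which is exactly what you cannot assume when constructing the image.

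The repair is an induction on $\dim X'$ in which the image is ultimately exhibited as a \emph{kernel of a surjection}, so that Prop.~\ref{prop:dh-ker} genuinely applies; in the example above, the diagonal is $\Ker\bigl(\Cp \oplus \Cp \to \Cp,\ (a,b) \mapsto a-b\bigr)$. Sketch: pick a projection $\pi \colon X' \to \Cp$ coming from the presentation of $X'$; by the connected-étale decomposition and the fundamental lemma, the composite $\pi \circ f$ has image either an étale subspace $U \subset \Cp$ or all of $\Cp$. In the first case $f$ factors through $X' \times_{\Cp} U$, which has smaller dimension, and induction applies. In the second case, set $K = \Ker(\pi \circ f)$ and $K' = \Ker \pi$, both Banach-Colmez by Prop.~\ref{prop:dh-ker}; one checks $f(X) \cap K' = f(K)$, which by induction is a sub-Banach-Colmez space $I_1 \subset K'$; then the image of $X$ in $X'/I_1$ (quotient by Prop.~\ref{prop:quo}) is the image of an analytic section of $X'/I_1 \to \Cp$ (analyticity uses the cokernel property of that quotient), hence the kernel of an analytic surjection onto $K'/I_1$, and $f(X)$ is its full preimage in $X'$, again a kernel of a surjection. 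With this replacement your proof goes through; everything else --- kernels and cokernels from the two propositions once $I$ exists, normality by dimension/height counting, and the values and uniqueness of $\dim$ and $\haut$ --- is correct as written.
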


\section{Banach-Colmez spaces from isocrystals and the ``weakly
admissible'' theorem}
\subsection{From isocrystals to Banach-Colmez spaces}

We use the notations of~\cite{CF2000}. Let $K_0$~be an unramified closed
subfield of~$\Cp$ with residue field~$k$; we write
$σ: K_0 → K_0$ for the absolute Frobenius automorphism. A
\emph{$K_0$-isocrystal} is a finite-dimensional $K_0$-vector space~$D$
with an injective $σ$-linear endomorphism~$φ: D → D$. The \emph{rank}
of~$D$ is its dimension~$h$ as a $K_0$-vector space. The
$K_0$-line~$⋀^{h} D$ is again an isocrystal, and the valuation~$v_p
(⋀^{h} φ)$ does not depend on the basis of~$D$. This integer is called
the \emph{Newton number} of~$D$, and is written~$\tN(D)$.

Let $D$~be an isocrystal. We define a contravariant functor~$E$ by
\begin{equation}\label{eq:ED}
E(D) = \Hom_{K_0[φ]} (D, B\cris).
\end{equation}
The isocrystal~$D$ is \emph{effective} if there exists a lattice~$\ro D$
of~$D$ such that~$φ(\ro D) ⊂ \ro D$. In this case, all elements of~$E(D)$
have their image contained in~$B^+\cris$.

If the residual field~$k$ of~$K_0$ is algebraically closed, then the
category of $K_0$-isocrystals is semi-simple, and its simple objects are
the objects
\begin{equation}
D_{d,h} = K_0[φ]/(φ^h-p^d), \quad \text{$d ∈ ℤ$, $h ≥ 1$}.
\end{equation}
In the general case, the rationals~$μ = d/h$ appearing in the decomposition
of~$D ⊗_{K_0} \Qpnr$ are called the \emph{slopes} of~$D$.
For any left $K_0[φ]$-module $(M, φ)$ and for~$r ∈ ℤ$, we write~$M(r)$
for the twisted module~$(M, p^r φ)$. Finally, we define
\begin{equation}
E_{d,h} = E(D_{d,h}) = \acco { x ∈ B^+\cris, φ^h(x) = p^{d} x}
\end{equation}
and, for all~$d$, $E_{d} = E_{d,1}$.

\begin{lem}\label{lem:Eeta-surj}
Let $D$~be an isocrystal with slopes~$≥ 1$ and of rank~$h$. Then there
exists a $K_0[φ]$-linear map~$η: D → B^+\cris(1)^h$ such that the
composed map
\[ θ ∘ E(η): E_1^h → E(D) → \Hom_{K_0} (D, \Cp) \]
is surjective.
\end{lem}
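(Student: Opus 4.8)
The plan is to first make the map $E(η)$ completely explicit, and then to recognise the surjectivity of $θ ∘ E(η)$ as a spanning condition inside $\Hom_{K_0}(D,\Cp)$. Write $η=(η_1,…,η_h)$ with each $η_i ∈ \Hom_{K_0[φ]}(D, B^+\cris(1))$. I claim $E(η)$ sends $(x_1,…,x_h) ∈ E_1^h$ to the map $d ↦ \sum_i x_i\,η_i(d)$: indeed, using $φ(x_i)=p x_i$ (valid since $x_i ∈ E_1$) and $η_i(φ d)=p φ(η_i(d))$ (valid since $η_i$ is $K_0[φ]$-linear with values in the twist $B^+\cris(1)$, whose Frobenius is $pφ$), one checks at once that $\sum_i x_i\,η_i(φ d)=φ(\sum_i x_i\,η_i(d))$, so this map indeed lies in $E(D)=\Hom_{K_0[φ]}(D,B\cris)$. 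Applying $θ$ then yields $d ↦ \sum_i θ(x_i)\,g_i(d)$, where $g_i:=θ∘η_i ∈ \Hom_{K_0}(D,\Cp)$.

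Next I would use the fundamental exact sequence $0 → \Qp\,t → E_1 →^{θ} \Cp → 0$, where $t$ is the usual element with $φ(t)=pt$. Its surjectivity means that, as $(x_i)$ runs over $E_1^h$, the scalars $(θ(x_1),…,θ(x_h))$ run over all of $\Cp^h$. Hence the image of $θ ∘ E(η)$ is exactly the $\Cp$-linear span of $g_1,…,g_h$ in $\Hom_{K_0}(D,\Cp)$. Since this target is a $\Cp$-vector space of dimension $h$ (the rank of $D$), the composite is surjective if and only if $g_1,…,g_h$ form a $\Cp$-basis. The lemma is therefore equivalent to the assertion that the functionals $θ∘η$, as $η$ ranges over $\Hom_{K_0[φ]}(D,B^+\cris(1))=E(D(-1))$, $\Cp$-span $\Hom_{K_0}(D,\Cp)$; one then takes for $η_1,…,η_h$ any elements whose reductions form a basis. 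The hypothesis that the slopes of $D$ are $≥1$ enters exactly here: it guarantees that $D(-1)$ is effective, so that these periods take values in $B^+\cris$ and $θ$ is defined on them.

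It remains to prove the span claim, and this is where the real content lies. I would dévisser along the slope filtration of $D$: after enlarging the residue field to its algebraic closure — and checking that a spanning family $η$ may be chosen Galois-equivariantly, so as to descend to $K_0$ — the isocrystal $D$ decomposes into simple summands $D_{d,h_0}$ with $d/h_0 ≥ 1$. Both sides of the span claim are additive in $D$, and a spanning family for a direct sum is assembled from spanning families for the summands; so it suffices to treat $D=D_{d,h_0}$ with $d ≥ h_0$. There $E(D(-1))=\acco{x ∈ B^+\cris,\ φ^{h_0}(x)=p^{d-h_0}x}$ with $d-h_0 ≥ 0$, and $θ∘η$ is encoded by the Hodge–Tate coordinates $θ(φ^j x)$ for $0 ≤ j < h_0$; the claim becomes that these coordinates fill out $\Cp^{h_0}$. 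I expect this to be the main obstacle: it is a span-version of the surjectivity in Fontaine's fundamental exact sequence for $(B^+\cris)^{φ^{h_0}=p^{n}}$, which I would derive from the case $h_0=1$ treated above, using multiplication by powers of $t$ and the conjugation action of $φ$ to pass between the $h_0$ coordinates. By comparison, the reductions of the first two paragraphs — the explicit form of $E(η)$ and the passage to a basis — are purely formal.
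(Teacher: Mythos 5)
Your first two paragraphs are correct and coincide with the paper's own (implicit) setup: the paper likewise reads $E_1$ as $\Hom_{K_0[φ]}(B^+\cris(1), B\cris)$ acting by multiplication, so that $θ ∘ E(η)$ sends $(x_i)$ to $∑_i θ(x_i)\,(θ∘η_i)$, and surjectivity becomes the existence of $η_1, …, η_h ∈ E(D(-1))$ whose images $θ∘η_i$ form a $\Cp$-basis of $\Hom_{K_0}(D, \Cp)$. Your reduction to $K_0 = \Qpnr$ and then to simple summands $D_{d,h_0}$ also parallels the paper; in fact it is more careful than the paper, whose cyclic vector $e$ with $φ^{h}(e) = p^{d}e$ exists only after such a reduction to the isoclinic case. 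Note, however, that no Galois-equivariant choices are needed for the descent: by adjunction $\Hom_{K_0[φ]}(D, B\cris) = \Hom_{K_0'[φ]}(D ⊗_{K_0} K_0', B\cris)$ and $\Hom_{K_0}(D, \Cp) = \Hom_{K_0'}(D ⊗_{K_0} K_0', \Cp)$, so one simply restricts $η'$ to $D$; this is what the paper's one-line reduction means.

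The genuine gap is exactly at the point you yourself flag as ``the main obstacle'': the spanning claim for a single $D_{d,h_0}$ with $h_0 ≥ 2$ is the entire non-formal content of the lemma, and the sketch you give for it cannot work as stated. Multiplication by powers of $t$ is of no help, because $t ∈ \Ker θ$: multiplying a period by $t$ kills all of its coordinates $θ(φ^j(\cdot))$ instead of producing new directions in $\Cp^{h_0}$. Conjugation by $φ$ merely shifts the coordinates cyclically (with a twist by $p^{d-h_0}$), so it preserves any candidate span and cannot enlarge it. Nor does the case $h_0 = 1$ formally imply the case $h_0 ≥ 2$: when $d - h_0 < h_0$ the map $E_{d-h_0,h_0} → \Cp^{h_0}$ is not even surjective ($E_{d-h_0,h_0}$ is then the period space of a $p$-divisible group of dimension $d-h_0 < h_0$, so the fundamental lemma forbids a surjection onto $\Cp^{h_0}$); only the span statement can hold, and it needs a genuinely new input. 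The paper supplies that input by an explicit construction: it chooses a period $a$ with $φ^{h_0}(a) = p^{d-h_0}a$ subject to the Moore-type nonvanishing $θ(\det(φ^{i+j}(a))_{i,j}) ≠ 0$, and builds the $h_0$ components of $η$ from the twists $φ^j(a)$ via $η_j(φ^i(e)) = p^i φ^{i+j}(a)$; that determinant condition is precisely the linear independence of the coordinate vectors you need. (What makes such an $a$ available is the $ℚ_{p^{h_0}}$-module structure of the period space: for $d = h_0$ the space is $ℚ_{p^{h_0}}$ itself and a normal-basis generator works, by linear independence of the characters $σ^i$; for $d > h_0$ one combines normal-basis multipliers from $ℚ_{p^{h_0}}$ with a single period all of whose coordinates are nonzero, which in turn uses the nontrivial fact that $θ$ does not vanish identically on $E_{d-h_0,h_0}$.) Admittedly the paper asserts the existence of $a$ rather tersely, but it does reduce the lemma to this concrete construction, whereas your proposal reduces the lemma to the same core statement and then leaves it as an expectation; as it stands, it does not prove the lemma.
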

\begin{proof}
Here we understand $E_1$ as the space~$\Hom_{K_0[φ]} (B^+\cris(1), B\cris)$;
thus $E(η): E_1^h → E(D)$ is actually the image of~$η: D → B^+\cris(1)^h$
under the contravariant functor~$E$.

If the lemma is true for any unramified extension~$K_0'$ of~$K_0$, then
it is true for~$K_0$; therefore we may assume that $K_0 = \Qpnr$.
Let~$d = \tN(D)$; then there exists~$e ∈ D$ such
that $(e, φ(e), …, φ^{h-1}(e))$~is a basis of~$D$ and~$φ^h(e) = p^d e$.
Since $D$~has slopes~$≥ 1$, we have $d ≥ h$, thus there exists~$a ∈
B^+\cris$ such that~$φ^h(a) = p^{d-h} a$ and~$θ(\det φ^i(a)_{i = 0,…,
h-1}) ≠ 0$. Let~$η: D → B^+\cris(1)^h$ be defined by $η(φ^i(e)) = p^i
φ^i(a)$. Then $η$~is $K_0[φ]$-linear and the condition~$θ(\det φ^i(a)) ≠
0$ implies that $θ ∘ E(η)$~is surjective.
\end{proof}

\begin{prop}\label{prop:ED-BC}
For any isocrystal~$D$, $E(D)$~has a canonical structure as a
Banach-Colmez space, of dimension~$d = \tN(D)$ and height~$h = \dim_{K_0}
D$. Moreover, for any $K_0[φ]$-linear map~$f: D → D' ⊗_{K_0} B^+\cris$,
the deduced map~$E(f): E(D') → E(D)$ is analytic.
\end{prop}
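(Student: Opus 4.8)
The plan is to build the analytic structure on $E(D)$, compute its two invariants by dévissage, and finally check the functoriality statement. For the analytic structure I would start from the effective case: fixing a $φ$-stable lattice $\ro D ⊂ D$ forces every $φ$-equivariant map $f: D → B\cris$ to take values in $B^+\cris$, so that $E(D)$ is the locus of $φ$-equivariant maps inside $\Hom_{K_0}(D, B^+\cris) \cong (B^+\cris)^h$. The $φ$-equivariance is a closed analytic condition, so $E(D)$ is a closed analytic subspace of a finite power of $B^+\cris$ and thus an effective spectral Banach space; for a non-effective $D$ I would recover the structure by twisting, using that multiplication by Fontaine's period $t$ (with $φ(t) = pt$) embeds $E(D)$ analytically into $E(D(r))$ for $r$ large enough that $D(r)$ is effective. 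Before computing the invariants I would make two reductions. First, for the unramified extension $\Qpnr/K_0$ the restriction–extension adjunction gives a natural, structure-preserving identification $E(D ⊗_{K_0} \Qpnr) \cong E(D)$, leaving $\tN$ and the rank unchanged, so I may assume the residue field algebraically closed. Second, the category of isocrystals is then semi-simple with simple objects the $D_{d,h}$; since $E$ carries finite direct sums to finite direct sums and both $\tN$ and the rank are additive, the additivity of dimension and height on split short exact sequences (Theorem~\ref{thm:ab}) reduces everything to $D = D_{d,h}$.

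To compute the invariants of $E(D_{d,h})$ I would run an induction on the Newton number using the period $t$. When $D$ has slopes $≥ 1$, Lemma~\ref{lem:Eeta-surj} provides a surjection $θ_*: E(D) → \Hom_{K_0}(D, \Cp) \cong \Cp^h$, and I expect its kernel to be exactly $t \cdot E(D(-1))$, yielding the exact sequence
\[ 0 → E(D(-1)) → E(D) → \Cp^h → 0, \]
in which the first map is multiplication by $t$ and the second is $θ_*$. Granting this, Proposition~\ref{prop:dh-ker} gives $\dim E(D) = \dim E(D(-1)) + h$ and $\haut E(D) = \haut E(D(-1))$; since $\tN(D(-1)) = \tN(D) - h$, this is exactly the recursion needed to propagate the formulas $\dim = \tN$ and $\haut = \dim_{K_0} D$ down the twists. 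The anchor is the effective Banach-Colmez space $E_1$ of dimension and height one coming from the fundamental lemma. The dimension itself is best seen on the de Rham side: the genuine $\Cp$-part of $E(D)$, of $\Cp$-dimension $\tN(D)$, arises from the realization of $E(D) ⊂ \Hom_{K_0}(D, B^+\cris)$ inside $\Hom_{K_0}(D, B^+\dR)$ modulo the appropriate power of $t$, while the $\Qp$-part of dimension $h$ is the sublattice of maps landing in a high power of $t$.

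The principal obstacle is the base of this induction, namely the isoclinic case of slopes in $[0,1)$, together with the exactness of the displayed sequence. The latter amounts to a $t$-divisibility statement in $B^+\cris$ — that a $φ$-equivariant map annihilated by $θ$ is divisible by $t$ — and the former cannot be reduced to integer slopes by twisting, since the fractional slope is a twist-invariant; both must be settled through the fundamental lemma rather than by explicit computation in $B^+\cris$, which is exactly where the analytic machinery of Banach-Colmez spaces does its work. Finally, for the functoriality statement, given $f: D → D' ⊗_{K_0} B^+\cris$ and $g ∈ E(D')$, the map $E(f)$ sends $g$ to the composite of $f$ with $g ⊗ \id$ followed by the multiplication $B\cris ⊗ B^+\cris → B\cris$; this is manifestly built from the (analytic) multiplication of $B\cris$ and is $K_0[φ]$-linear, so $E(f)$ is analytic. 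I expect the only point requiring care here to be that $E(f)$ respects the closed analytic subspaces cut out inside the relevant powers of $B^+\cris$, which again follows from the analyticity of multiplication.
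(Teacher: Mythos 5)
Your proposal has two genuine gaps, and they sit exactly where the content of the proposition lies. First, the foundational step --- ``$φ$-equivariance is a closed analytic condition, so $E(D)$ is a closed analytic subspace of $(B^+\cris)^h$ and thus an effective spectral Banach space'' --- is not available. The space $B^+\cris$ is not a spectral Banach space (it is not even a Banach space), so there is no ambient analytic object in which to cut out a closed subspace, and even granting some ambient structure, ``closed'' would give you at best a Banach subspace, not the finite presentation $0 → V_{+} → E → L → 0$ that a Banach-Colmez structure requires. Producing that presentation is precisely what must be proved. The paper does it the other way around: by induction on the maximal slope it constructs an \emph{admissible pair} $(Δ, α)$ with $Δ = D(-1) ⊕ K_0(1)^h$ and $α(x) = (x ⊗ t) ⊕ η(x)$, and \emph{defines} the structure on $E(D)$ by exhibiting it as the quotient of the already-constructed Banach-Colmez space $E(Δ)$ by the $h$-dimensional kernel $\Hom_{K_0[φ]}(D, K_0(1)^h)$ --- which is exactly the definition of a Banach-Colmez space as a quotient by a finite-dimensional $\Qp$-vector space. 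Because your structure is unjustified, your subsequent appeal to Proposition~\ref{prop:dh-ker} on the sequence $0 → E(D(-1)) → E(D) → \Cp^h → 0$ is circular: that proposition applies to \emph{analytic} surjections of Banach-Colmez spaces, which presupposes the structure you are trying to build.

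Second, you correctly identify the base case (slopes in $[0,1)$, not removable by twisting) as ``the principal obstacle,'' but you do not resolve it; saying it ``must be settled through the fundamental lemma'' is not an argument. The paper's missing idea here is Dieudonné theory: when all slopes of $D$ lie in $[0,1]$ there is a lattice $\ro D$ with $p \ro D ⊂ φ(\ro D) ⊂ \ro D$, so $D$ is the Dieudonné module of a $p$-divisible group, and the Banach-Colmez structure on $E(D)$ with the stated dimension and height is supplied by~\cite[2.1.5]{banach}. Finally, your functoriality argument (``manifestly built from the analytic multiplication of $B\cris$'') suffers from the same defect as the first gap: ``analytic'' is a structured notion (a morphism in $\BC$), and multiplication in $B\cris$ carries no such structure; the paper instead proves analyticity of $E(f)$ by a graph argument, showing the graph is the fibre product of $E(α) ∘ E(g)$ and $E(α)$ for compatible admissible pairs, hence analytically closed in $E(Δ) × E(Δ')$ by the induction hypothesis --- and the same argument applied to $\id_D$ is what makes the structure canonical, a point your reduction to simple summands $D_{d,h}$ also leaves unaddressed.
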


In particular, this defines a functor from the category of
$K_0$-isocrystals to that of Banach-Colmez spaces.

\begin{proof}
For any~$r ∈ ℤ$, let~$D(r)$ be the Tate twisted isocrystal~$(D, p^{r}
φ)$. Then the slopes of~$D(r)$ are the slopes of~$D$, translated by~$r$.
By using a Tate twist, we may assume that $D$~is effective.

We now prove the proposition by induction on the maximal slope of~$D$. If
all the slopes of~$D$ lie in the interval~$[0,1]$, then there exists a
lattice~$\ro D$ of~$D$ such that~$p \ro D ⊂ φ(\ro D) ⊂ \ro D$, and
therefore $D$~is the Dieudonné module of a $p$-divisible group. In this
case the result follows from~\cite[2.1.5]{banach}.

Assume that the proposition is true for all isocrystals with slopes~$≤ m$
for some integer~$m$, and let~$D$ be an isocrystal with slopes~$≤ m+1$.
By using the isocline decomposition for~$D$, we may assume that $D$~has
all its slopes in the interval~$[m,m+1]$. Let~$h = \dim_{K_0} D$.

%
Define an \emph{admissible pair} for~$D$ as a pair~$(Δ, α)$, where $Δ$~is
an isocrystal with slopes~$≤ m$ and $α: D → Δ ⊗_{K_0} B^+\cris$ is a
$K_0[φ]$-linear map such
that $E(α): E(Δ) → E(D)$, defined by~$E(α)(f) = (\id_{B^+\cris} ⊗ f) ∘
α$, is surjective and has a finite-dimensional
kernel (as a $\Qp$-vector space). Then any admissible pair for~$D$ gives
a Banach-Colmez structure on~$E(D)$. Moreover, let~$f: D → D'$ be a morphism
of isocrystals, $(Δ, α)$~an admissible pair for~$D$, $(Δ', α')$ an
admissible pair for~$D'$, and $g: Δ → B^+\cris ⊗_{K_0} Δ'$~a
$K_0[φ]$-linear map such that~$g ∘ α = α' ∘ f$. Then the graph of~$E(f)$
is the fibre product of the maps~$E(f) ∘ E(α') = E(α) ∘ E(g)$
and~$E(α)$. By the induction hypothesis for~$Δ$ and~$Δ'$, it is
analytically closed in~$E(Δ) × E(Δ')$, and therefore $E(f)$~is an
analytic morphism. Finally, applying this to the identity morphism of~$D$
proves that the analytic structure on~$E(D)$ does not depend on the
choice of the admissible pair~$(Δ, α)$.

\medskip

It remains to prove that the isocrystal~$D$ has
an admissible pair~$(Δ, α)$.
Since $D$~has slopes~$≤ m+1$, the isocrystal~$D(-1)$ has slopes~$≤ m$
and, by the induction hypothesis, $E(D(-1))$~is a Banach-Colmez space.
Moreover, multiplication by a generator~$t$ of~$ℤ_p(1)$ in~$B^+\cris$
gives the left exact sequence
\begin{equation}
0 → E(D(-1)) →^{× t} E(D) →^{π} \Hom_{K_0} (D, \Cp).
\end{equation}
Let~$Δ = D(-1) ⊕ K_0(1)^h$
and~$α: D → Δ ⊗_{K_0} B^+\cris$ be the map defined by
$α(x) = (x ⊗ t) ⊕ η(x)$,
where $η: D → B^+\cris(1)^h$ is defined as in Lemma~\ref{lem:Eeta-surj}.
Then $α$~is $K_0[φ]$-linear and the image of~$E(α)$ contains
both~$E(D(-1))$ and representatives of~$\Hom_{K_0} (D, \Cp)$;
therefore, $E(α)$~is surjective.
The kernel of~$E(α)$ is isomorphic to~$\Hom_{K_0[φ]} (D, K_0(1)^h)$,
which is a $\Qp$-vector space of dimension~$h$.
Therefore, $(Δ, α)$~is an admissible pair for~$D$.

Since~$\tN(D(-1)) = d-h$ and~$\dim_{K_0} D(-1) = h$,
$E(D(-1))$~has dimension~$d-h$ and height~$h$ by the induction hypothesis;
therefore, $E(Δ)$~has dimension~$d$ and height~$2h$,
and $E(D)$~is a Banach-Colmez space of dimension~$d$ and height~$h$.
\end{proof}

%

\begin{prop}\label{prop:hom-ED}
Let $D$, $D'$ be two isocrystals. Then
\[ \Hom_{\BC} (E(D'), E(D)) = \Hom_{K_0[φ]} (D, D' ⊗_{K_0} B^+\cris). \]
\end{prop}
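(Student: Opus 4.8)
The statement asserts that the contravariant functor $E$ is fully faithful after $B^+\cris$-linearising its source. Proposition~\ref{prop:ED-BC} already supplies the comparison map in one direction: to a $K_0[\varphi]$-linear map $f\colon D\to D'\otimes_{K_0}B^+\cris$ it attaches the analytic morphism $E(f)\colon E(D')\to E(D)$ sending $g\in E(D')=\Hom_{K_0[\varphi]}(D',B\cris)$ to the composite
\[ D\xrightarrow{\ f\ }D'\otimes B^+\cris\xrightarrow{\ g\otimes\id\ }B\cris\otimes B^+\cris\to B\cris, \]
the last arrow being multiplication. This assignment is $\Qp$-linear and functorial in $D$ and $D'$, so it defines a natural map
\[ \Phi\colon\Hom_{K_0[\varphi]}(D,D'\otimes_{K_0}B^+\cris)\longrightarrow\Hom_{\BC}(E(D'),E(D)), \]
and the plan is to prove that $\Phi$ is bijective.

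First I would reduce to simple isocrystals. Both source and target of $\Phi$ are additive bifunctors, contravariant in $D$ and covariant in $D'$. In the target this rests on the exactness of $E$: the functor $\Hom_{K_0[\varphi]}(-,B\cris)$ is left exact, and additivity of dimension and height (Theorem~\ref{thm:ab}, via Propositions~\ref{prop:dh-ker} and~\ref{prop:quo}) upgrades this to exactness of $E$ as a functor into the abelian category $\BC$; hence $\Hom_{\BC}(E(D'),-)$ and $\Hom_{\BC}(-,E(D))$ are left exact. Because $E$ is insensitive to unramified base change while the twisted Hom satisfies Galois descent under it (as in the reduction opening the proof of Lemma~\ref{lem:Eeta-surj}), I may assume $K_0=\Qpnr$, so that the residue field is algebraically closed and every isocrystal is a direct sum of the simple objects $D_{d,h}$. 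By bi-additivity it then suffices to treat $D=D_{d,h}$ and $D'=D_{d',h'}$.

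Next, for fixed $D'$ I would run the induction on the maximal slope of $D$ used in Proposition~\ref{prop:ED-BC}. After a Tate twist reducing to the effective case, the multiplication-by-$t$ sequence
\[ 0\to E(D(-1))\xrightarrow{\ \times t\ }E(D)\xrightarrow{\ \pi\ }\Hom_{K_0}(D,\Cp) \]
presents $E(D)$ as an extension of part of a $\Cp$-vector space by the lower-slope space $E(D(-1))$. Applying $\Hom_{\BC}(E(D'),-)$ and comparing with the analogous filtration of $\Hom_{K_0[\varphi]}(D,D'\otimes B^+\cris)$ by the order of vanishing along $t$ reduces the isomorphism for $D$ to the inductive hypothesis together with the base case in which all slopes of $D$ and $D'$ lie in $[0,1]$. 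In that base case both $E(D)$ and $E(D')$ come from $p$-divisible groups, and the identification of $\Phi$ with an isomorphism is precisely the computation of morphisms furnished by~\cite{banach}.

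The main obstacle is the surjectivity of $\Phi$: showing that every analytic morphism $E(D')\to E(D)$ is induced by a genuine $K_0[\varphi]$-linear map $D\to D'\otimes B^+\cris$. Injectivity is comparatively soft, following from Lemma~\ref{lem:Eeta-surj}, which guarantees enough functionals in $E(D')$ to detect a nonzero~$f$. For surjectivity the key is the graph description of morphisms in $\BC$: given $\phi\colon E(D')\to E(D)$, its graph is an analytic subspace of $E(D')\times E(D)$, and I would show that the inductive structure above forces this graph to coincide with the one attached to some $f$. The delicate point is controlling the $\Cp$-quotient term $\Hom_{K_0}(D,\Cp)$ occurring in the connecting maps — in particular checking that the induced morphisms on these finite-dimensional $\Cp$-vector spaces are $\Cp$-linear of the expected shape — and this is exactly where the analyticity of $\phi$, rather than mere $\Qp$-linearity, is indispensable.
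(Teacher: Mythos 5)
Your easy direction coincides with the paper's: Proposition~\ref{prop:ED-BC} supplies the map $\Phi$ and its analyticity. The converse, however, is where your text is a plan rather than a proof, and the two steps on which the plan rests are exactly the ones left open. First, the base case: you assert that for slopes in $[0,1]$ the identification of $\Hom_{\BC}(E(D'),E(D))$ with $\Hom_{K_0[\varphi]}(D,D'\otimes_{K_0}B^+\cris)$ is ``precisely the computation of morphisms furnished by~\cite{banach}''; but that reference is invoked in this paper only for the Banach-Colmez \emph{structure} on $E(D)$ when $D$ is the Dieudonné module of a $p$-divisible group (\cite[2.1.5]{banach}), not for any computation of analytic morphism spaces, so the citation does not carry the statement you need. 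Second, the inductive step: to pass along the sequence $0\to E(D(-1))\xrightarrow{\times t}E(D)\xrightarrow{\pi}\Hom_{K_0}(D,\Cp)$ after applying $\Hom_{\BC}(E(D'),-)$, you must (a) control its right-hand end, since it is only left exact as written, and (b) compute $\Hom_{\BC}(E(D'),\Cp)$, i.e.\ prove that every analytic map from $E(D')$ to a finite-dimensional $\Cp$-vector space is a $\Cp$-linear combination of maps of the form $\theta\circ\varphi^r$, together with enough control of the relevant $\Ext^1$ terms to make the diagram chase conclude. You name this yourself as ``the delicate point'' where analyticity is indispensable, and then stop. But that rigidity statement \emph{is} the substance of the proposition (it is of the same nature as Lemma~\ref{lem:M-plein} and Lemma~\ref{lem:lf-ext1}~(ii)); acknowledging it without proving it leaves the argument empty at its core.

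For contrast, the paper does not induct on slopes at all: it routes the hard direction through the de Rham envelope. An analytic map $f\colon E(D)\to E(D')$ induces a $B^+\dR$-linear map $f^+\dR\colon E(D)^+\dR\to(E(D'))^+\dR$ --- this is where all of the analytic rigidity is concentrated, via the theory of constructible spaces and the full faithfulness of the $B^+\dR$-module functor (Lemma~\ref{lem:M-plein}, Proposition~\ref{prop:Hom-dR}) --- and since $f^+\dR$ carries $E(D)$ into $E(D')$ and is compatible with the Frobenius structures, it is induced by a $B^+\cris[\varphi]$-linear map $D\otimes_{K_0}B^+\cris\to D'\otimes_{K_0}B^+\cris$, i.e.\ by an element of $\Hom_{K_0[\varphi]}(D,D'\otimes_{K_0}B^+\cris)$. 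If you wish to salvage your inductive approach, the ingredient you must supply is exactly a substitute for that rigidity: a proof of the computation of $\Hom_{\BC}(E(D'),\Cp)$ and of the vanishing or identification of the $\Ext^1$ terms appearing along the $\times t$ sequences.
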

\begin{proof}
We know that any $K_0[φ]$-linear map~$D → D' ⊗_{K_0} B^+\cris$ is
analytic by Proposition~\ref{prop:ED-BC}. Conversely, any analytic
map~$f: E(D) → E(D')$ defines a $B^+\dR$-linear map~$f^+\dR:
E(D)^+\dR → E(D')^+\dR$. Since we also have~$f^+\dR(E(D)) ⊂ E(D'))$,
we see that $f$~comes from a $B^+\cris[φ]$-linear map~$D
⊗_{K_0} B^+\cris → D' ⊗_{K_0} B^+\cris$.
\end{proof}

\subsection{$B^+\dR$-modules as Banach-Colmez spaces}

Let~$K$ be a closed subfield of~$\Cp$ with discrete valuation and~$K_0$
be its maximal non-ramified subfield. A \emph{filtered $K$-isocrystal} is
a pair~$(D, \Fil)$, where $D$~is a $K_0$-isocrystal, and $\Fil$~is an
exhaustive and separated decreasing filtration on the $K$-vector
space~$D_K = D ⊗_{K_0} K$. By abuse, we shall sometimes write~$D$ instead
of~$(D, \Fil)$ when the filtration is clear. The \emph{Hodge number}
of~$(D, \Fil)$ is the integer~$\tH(D,\Fil)$ defined by
\begin{equation}
\tH(D,\Fil) = ∑_{i ∈ ℤ} i · \dim_K (\Fil^i/\Fil^{i+1} D_K).
\end{equation}
Since $D$~is finite-dimensional, this sum is finite. We also define 
\begin{gather}
V(D,\Fil) = \Hom_{φ,\Fil}(D, B^+\cris),\\
\notag
M(D,\Fil) = \Hom_{K} (D, B^+\dR) / \Hom_{K,\Fil} (D, B^+\dR).
\end{gather}
Together with~$E(D)$ as defined in~(\ref{eq:ED}), they make a left-exact
sequence of $\Qp$-vector spaces:
\begin{equation}\label{eq:VEM}
0 → V(D,\Fil) → E(D) → M(D,\Fil).
\end{equation}
The spaces~$E(D)$ and~$M(D,\Fil)$ are contravariant analogues of the
spaces~$V^0\cris$ and~$V^1\cris$ of~\cite[§5]{CF2000}.


\begin{prop}\label{prop:BdR-BC}
There exists a fully faithful functor from the category of finite-length
$B^+\dR$-modules to that of Banach-Colmez spaces,
extending the inclusion of finite-dimensional vector spaces over $ℂ_p$.
Moreover, for any filtered isocrystal~$D$,
this analytic structure on~$M(D,\Fil)$ makes the sequence~(\ref{eq:VEM})
a left exact sequence of Banach-Colmez spaces.
\end{prop}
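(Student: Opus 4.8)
The plan is to reduce the construction of the functor to the cyclic modules $B^+\dR/t^n$, to realise each of these as an explicit quotient of a Banach-Colmez space produced by the functor~$E$, and then to deduce full faithfulness from the de Rham realisation.

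Since $B^+\dR$~is a complete discrete valuation ring with uniformiser~$t$ and residue field~$\Cp$, every finite-length $B^+\dR$-module is (non-canonically) a finite direct sum of cyclic modules $B^+\dR/t^{n_i}$, so the essential case is that of $B^+\dR/t^n$. For $n ≥ 1$ I would use Fontaine's \emph{fundamental exact sequence}
\[ 0 → \Qp · t^n → E_n →^{θ_n} B^+\dR/t^n → 0, \]
where $θ_n$~is reduction modulo~$t^n$ and the kernel is $\{x ∈ E_n : x ∈ t^n B^+\dR\} = \Qp · t^n$. Here $E_n = E(D_{n,1})$~is a Banach-Colmez space of dimension~$n$ and height~$1$ (Proposition~\ref{prop:ED-BC}) and $\Qp · t^n$~is a one-dimensional étale $\Qp$-subspace; by Proposition~\ref{prop:quo} the quotient $B^+\dR/t^n ≅ E_n/\Qp · t^n$ is therefore a Banach-Colmez space of dimension~$n$ and height~$0$. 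Taking direct sums attaches to every finite-length module~$M$ a Banach-Colmez space~$\mathcal{M}$ with $\dim \mathcal M$ equal to the length of~$M$ and $\haut \mathcal M = 0$; on modules killed by~$t$, i.e. on finite-dimensional $\Cp$-vector spaces, this recovers their given Banach-Colmez structure (the case $n = 1$ being the fundamental lemma for the surjection $E_1 → \Cp$).

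The heart of the matter — and the step I expect to be the \emph{main obstacle} — is full faithfulness, namely $\Hom_{\BC}(\mathcal M, \mathcal N) = \Hom_{B^+\dR}(M, N)$. For this I would invoke the de Rham realisation already used in the proof of Proposition~\ref{prop:hom-ED}: an analytic map induces a $B^+\dR$-linear map on realisations, and the realisation of~$\mathcal M$ is~$M$. As the realisation functor is faithful, this gives an injection $\Hom_{\BC}(\mathcal M, \mathcal N) ↪ \Hom_{B^+\dR}(M, N)$; the content is its surjectivity, i.e. that every $B^+\dR$-linear map is analytic. Decomposing into a matrix of maps between cyclic modules reduces this to the case $B^+\dR/t^m → B^+\dR/t^n$; such a map is $b$~times a canonical map (a projection, or a multiplication by a power of~$t$) for some $b ∈ B^+\dR$, the canonical maps being analytic — the projections as cokernel maps in~$\BC$, and multiplication by~$t$ as the image under~$E$ of a suitable $K_0[φ]$-linear map (Proposition~\ref{prop:ED-BC}). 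Everything then comes down to showing that the $B^+\dR$-action is by analytic endomorphisms of these spaces; this is the delicate technical point, which I would settle by the compatibility of the action with the de Rham realisation.

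For the final assertion about~(\ref{eq:VEM}), the module $M(D,\Fil)$~has finite length~$\tH(D,\Fil)$ and therefore carries the Banach-Colmez structure just constructed, of dimension~$\tH(D,\Fil)$ and height~$0$. The map $π\colon E(D) → M(D,\Fil)$ is induced by the inclusion $B^+\cris ⊂ B^+\dR$ and reduction modulo filtered homomorphisms; it coincides with the map of de Rham realisations and is hence analytic. By construction its kernel is $\Hom_{φ,\Fil}(D, B^+\cris) = V(D,\Fil)$, which is the left-exactness of~(\ref{eq:VEM}) at the level of $\Qp$-vector spaces. Since $π$~is analytic and $\BC$~is abelian (Theorem~\ref{thm:ab}), the kernel $V(D,\Fil)$~is a sub-Banach-Colmez space and~(\ref{eq:VEM}) is a left exact sequence of Banach-Colmez spaces, as required.
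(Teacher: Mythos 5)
Your first step --- realising each cyclic module $B_m = B^+\dR/\Fil^m B^+\dR$ as the quotient of $E_m$ by $\Qp(m)$ via the fundamental exact sequence~\eqref{eq:EmBm}, then summing over a decomposition into cyclic modules --- is exactly the paper's Lemma~\ref{lem:M-BC}, and is correct. The gap is everything after that: each of the three remaining claims (analytic maps between finite-length modules are $B^+\dR$-linear; $B^+\dR$-linear maps, in particular multiplication by $b \in B^+\dR$, are analytic; the map $E(D) \to M(D,\Fil)$ is analytic) is justified in your proposal by an appeal to the de Rham realisation, and every one of these appeals is circular. In this paper the realisation $X^+\dR$ is constructed only in the section \emph{following} this proposition, from the functor $X \mapsto \limi_m \Hom_{\BC}(X, B_m)$; consequently the property you invoke, ``the realisation of $\mathcal M$ is $M$'', is equivalent to the equality $\Hom_{\BC}(M, B_m) = \Hom_{B^+\dR}(M, B_m)$, which is precisely the full faithfulness you are trying to prove. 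Likewise, ``compatibility of the $B^+\dR$-action with the de Rham realisation'' cannot yield analyticity of anything: the assertion that a $B^+\dR$-linear map compatible with the spaces is analytic \emph{is} the statement at stake, so the step you yourself flag as the delicate technical point is left with no argument at all, and the same objection applies to your claim that $E(D) \to M(D,\Fil)$ is analytic ``because it coincides with the map of de Rham realisations''.

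What is missing are the paper's three concrete arguments, none of which has a counterpart in your proposal. (a)~Lemma~\ref{lem:E-M-analytic}: the reduction map $E_{d,h} \to B_m$ is analytic because its graph is the fibre product of two explicit presentations, $E_1^{qh} \oplus E_{r,h} \to E_{d,h}$ (built from Lemma~\ref{lem:Eeta-surj}) and $E_1^m \to B_m$, hence is analytically closed. (b)~Lemma~\ref{lem:B+dR-f-an}: multiplication by $a \in B_m$ is analytic because, after lifting $a$ to $\widehat a \in E_m$ and choosing $v \in E_m$ with $v \equiv 1 \pmod{\Fil^m}$, its graph is $\acco{(x,y) \in E_m \times E_m : \widehat{a} x - v y \in \Fil^m E_{2m}}$, which is analytically closed by~(a); this, not the realisation, is how ``linear implies analytic'' is obtained, it is what shows the structure is independent of your non-canonical choice of cyclic decomposition (apply it to the identity map), and (a) is also what makes~(\ref{eq:VEM}) a left exact sequence of Banach-Colmez spaces. (c)~Lemma~\ref{lem:M-plein}: ``analytic implies linear'' reduces to $\Hom_{\BC}(B_2,\Cp) = \Cp$, proved by a dimension count on the presentation $0 \to V \to E_{2,2} \to B_2 \to 0$ with $V = E_{2,2} \cap \Fil^2 B^+\cris$, using $E_{2,2} \cong E_1 \otimes_{\Qp} \mathbb{Q}_{p^2}$ and the fact that $V \not\subset \mathbb{Q}_{p^2}(1)$ to show that the restriction map to $\Hom_{\BC}(V,\Cp)$ is nonzero. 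Note in particular that this last direction, which you dismiss as the formal one, is the one requiring the subtlest argument; a repaired proof must supply (a)--(c), or equivalents, before the de Rham functor is ever mentioned.
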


We prove this proposition in Lemmas~\ref{lem:M-BC} to~\ref{lem:M-plein}.

\begin{lem}\label{lem:M-BC}
Any finite-length $B^+\dR$-module~$M$
has a structure as a Banach-Colmez space.
\end{lem}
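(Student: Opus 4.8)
The plan is to reduce the statement to the cyclic modules $B^+\dR/t^n$ and to present each of these as the quotient of an effective Banach-Colmez space by a $\Qp$-line, the required presentation being furnished by a fundamental exact sequence.

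First I would apply the structure theorem for finite-length modules over the discrete valuation ring $B^+\dR$, whose maximal ideal is generated by a uniformizer $t$ (a generator of $ℤ_p(1)$): every such module decomposes as $M \cong \bigoplus_j B^+\dR/t^{n_j}$. Since the category $\BC$ is abelian by Theorem~\ref{thm:ab}, it is closed under finite direct sums, and the direct sum of Banach-Colmez structures on the summands has underlying Banach space $M$. It therefore suffices to treat a single $M = B^+\dR/t^n$.

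Next I would use the effective Banach-Colmez space $E_n = E(D_{n,1}) = \{x \in B^+\cris : \phi(x) = p^n x\}$, which by Proposition~\ref{prop:ED-BC} has dimension $\tN(D_{n,1}) = n$ and height $1$. The crux is the exact sequence
\[ 0 \to \Qp\, t^n \to E_n \xrightarrow{\ \rho\ } B^+\dR/t^n \to 0, \]
in which $\rho$ is reduction modulo $t^n$ along the inclusion $B^+\cris \subset B^+\dR$. Granting it, the pair $\Qp t^n \to E_n$ is exactly a presentation in the sense defined above --- an \'etale $\Qp$-line mapping into an effective Banach-Colmez space --- so its $H^0$, namely $B^+\dR/t^n = E_n/\Qp t^n$, is a Banach-Colmez space by the very definition of $\BC$ (and has dimension $n$, height $0$ by Proposition~\ref{prop:quo}).

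The main obstacle is thus the displayed exact sequence. The inclusion $\Qp t^n \subseteq \Ker \rho$ is immediate from $\phi(t) = pt$ and $t^n \in t^n B^+\dR$, and equality reduces to the standard identity $B\cris^{\phi=1} \cap B^+\dR = \Qp$. For surjectivity of $\rho$ I would argue by induction on $n$; the base case $n=1$ is the surjectivity of $E_1 \to \Cp$, which is the content of Lemma~\ref{lem:Eeta-surj}. For the inductive step I would compare the two short exact sequences
\[ 0 \to E_{n-1} \xrightarrow{\ \times t\ } E_n \to \Cp \to 0, \qquad 0 \to B^+\dR/t^{n-1} \xrightarrow{\ \times t\ } B^+\dR/t^n \to \Cp \to 0, \]
related by the reduction maps and the identity on $\Cp$: the first row comes from the left-exact sequence of Proposition~\ref{prop:ED-BC} (its surjectivity onto $\Cp$ again being Lemma~\ref{lem:Eeta-surj}), and the second from the $t$-adic filtration. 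A cokernel chase (snake lemma) then shows that $\rho$ is surjective as soon as $E_{n-1} \to B^+\dR/t^{n-1}$ is, by induction. In this way all of the analytic difficulty is concentrated in the base case $n=1$, i.e. in the fundamental lemma of~\cite{banach}, and the remainder of the argument is purely formal.
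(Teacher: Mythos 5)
Your proposal follows the same skeleton as the paper's proof: reduce to the cyclic quotients $B_n = B^+\dR/t^n$; realize $B_n$ via the exact sequence $0 \to \Qp(n) \to E_n \to B_n \to 0$, where $E_n = E(D_{n,1})$ is a Banach-Colmez space by Proposition~\ref{prop:ED-BC}; conclude by passing to the quotient. The one methodological difference is that the paper quotes this exact sequence wholesale from~\cite[5.3.7(ii)]{Fontaine1994Corps} (it is the sequence~\eqref{eq:EmBm}), whereas you re-derive it. Your induction for surjectivity, driven by Lemma~\ref{lem:Eeta-surj} and the left-exact sequence appearing in the proof of Proposition~\ref{prop:ED-BC}, is sound; but your kernel computation appeals to the identity $\Be \cap B^+\dR = \Qp$, which is a Fontaine-level theorem of exactly the same depth --- in the paper it would be read off from the limit sequence~\eqref{eq:BeBt}, i.e.\ from the very citation you are trying to avoid. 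So the external input is displaced rather than eliminated; this is acceptable, but only if you state that identity as a granted standard fact rather than as something ``reduced to''.

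There is, however, one false assertion that needs repair: $E_n$ is \emph{not} an effective Banach-Colmez space when $n \ge 2$, so the pair $\Qp t^n \to E_n$ is not a representative ``in the sense of the definition of~$\BC$'', and the phrase ``by the very definition of $\BC$'' does not apply. Proposition~\ref{prop:ED-BC} produces a Banach-Colmez structure (concretely, as a quotient of some $E(\Delta)$ by an \'etale kernel), not an effective one; and effectivity genuinely fails: an exact sequence $0 \to V \to E_n \to L \to 0$ with $V$ a finite-dimensional $\Qp$-vector space and $L \cong \Cp^n$ would force $\dim_{\Cp} \Hom_{\BC}(E_n, \Cp) \ge n$ by pullback along $E_n \to L$, whereas $\Hom_{\BC}(E_n, \Cp) = \Cp\,\theta$ is one-dimensional (compare Proposition~\ref{prop:Hom-dR}: the $B^+\dR$-envelope of $E_n$ is $B^+\dR$ itself, and $\Hom_{B^+\dR}(B^+\dR, \Cp) = \Cp$). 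The step is easily repaired, and the repair is the one you already invoke for the dimension count: $\Qp t^n$ is an \'etale sub-Banach-Colmez space of the Banach-Colmez space $E_n$, so the quotient $B_n = E_n/\Qp t^n$ exists in $\BC$ by Proposition~\ref{prop:quo} (equivalently, by Theorem~\ref{thm:ab}); this is precisely how the paper concludes.
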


\begin{proof}
It is enough to prove this for
the quotient spaces~$B_m = B^+\dR / \Fil^m B^+\dR$.
This module inserts in the exact sequence~\cite[5.3.7(ii)]{Fontaine1994Corps}
\begin{equation}\label{eq:EmBm}
0 → \Qp(m) → E_m → B_m → 0,
\end{equation}
where~$E_m = \acco{ x ∈ B^+\cris, φ(x) = p^m x}$.
Let~$D_m$ be the isocrystal~$K_0[φ]/(φ-p^m)$;
then~$E(D_m) = E_m$, thus by Proposition~\ref{prop:ED-BC},
$E_{m})$~is a Banach-Colmez space.
Therefore, its quotient $B_m$ is a Banach-Colmez space.
\end{proof}

\begin{lem}\label{lem:E-M-analytic}
Let~$(D, \Fil)$ be a filtered isocrystal. Then the map~$E(D) → M(D)$ is
analytic.
\end{lem}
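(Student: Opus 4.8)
The plan is to make the map explicit, reduce to the building blocks~$B_n$ of Lemma~\ref{lem:M-BC}, and then decide analyticity on the $B^+\dR$-realizations. First I would reduce to the case where~$D$ is effective. After the Tate twist~$D(r)$, which as in the proof of Proposition~\ref{prop:ED-BC} affects neither the Banach-Colmez structures nor the analytic nature of the map, we may assume~$φ(\ro D) ⊂ \ro D$ for some lattice~$\ro D$, so that every~$x ∈ E(D) = \Hom_{K_0[φ]}(D, B\cris)$ has image in~$B^+\cris$. Extending~$x$ to a $K$-linear map~$x_K : D_K → B^+\cris ⊗_{K_0} K ⊂ B^+\dR$, the map~(\ref{eq:VEM}) sends~$x$ to the class of~$x_K$ in~$M(D,\Fil)$. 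Choosing a $K$-basis~$(e_j)$ of~$D_K$ adapted to the filtration, with jumps~$n_j$, identifies~$M(D,\Fil)$ with~$⨁_j B_{n_j}$, where~$B_n = B^+\dR/\Fil^n B^+\dR$, and writes the map as~$x \mapsto (x_K(e_j) \bmod \Fil^{n_j})_j$. Since the category is abelian (Theorem~\ref{thm:ab}) and~$M(D,\Fil)$ is the finite product of the~$B_{n_j}$, the map is analytic as soon as each of its components~$π_j : E(D) → B_n$, $n = n_j$, is; so it suffices to treat one~$π_j$.

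Recall that~$B_n$ carries the Banach-Colmez structure obtained in Lemma~\ref{lem:M-BC} from Fontaine's analytic surjection~$E_n → B_n$ of~(\ref{eq:EmBm}), namely the reduction~$\acco{x ∈ B^+\cris, φ(x) = p^n x} → B^+\dR/\Fil^n$. Were~$e_j$ a $φ$-eigenvector of~$D$ over~$K_0$ with~$φ e_j = p^n e_j$, the component~$π_j$ would be~$E(γ)$ followed by this surjection, where~$γ : D_n → D$ carries the generator to~$e_j$, and analyticity would be immediate from Proposition~\ref{prop:hom-ED} and Lemma~\ref{lem:M-BC}. In general, though, $e_j$~is only a filtered basis vector of~$D_K$ and bears no relation to~$φ$; this is the heart of the difficulty, for the filtration is not functorial for the isocrystal structure, and the functoriality of~$E$ established in Proposition~\ref{prop:ED-BC} therefore gives no hold on~$π_j$.

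To get around this I would argue on the $B^+\dR$-realization functor~$X \mapsto X^+\dR$ already invoked in the proof of Proposition~\ref{prop:hom-ED}, for which~$E(D)^+\dR$ is the finite free module~$\Hom_K(D_K, B^+\dR)$ and~$(B_n)^+\dR = B_n$. On realizations~$π_j$ becomes the tautological $B^+\dR$-linear surjection~$f \mapsto f(e_j) \bmod \Fil^n$, so that its graph, viewed on realizations, is the $B^+\dR$-submodule of~$\Hom_K(D_K, B^+\dR) × B_n$ cut out by the single $B^+\dR$-linear equation~$m = f(e_j) \bmod \Fil^n$. By the characterisation of analytic maps through their graphs recalled in~§1, it remains to see that such a $B^+\dR$-linear locus is an analytic subspace; this last implication—that $B^+\dR$-linearity on realizations forces analyticity—is the one genuine obstacle, and it is here, rather than at the level of isocrystals, that the work has to be done. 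Granting it, each~$π_j$ is analytic, and hence so is~$E(D) → M(D,\Fil)$.
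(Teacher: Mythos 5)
Your reductions are sound as far as they go: twisting to make $D$ effective, writing the map in a filtered basis as $x \mapsto (x_K(e_j) \bmod \Fil^{n_j})_j$, and reducing to a single component $\pi_j: E(D) \to B_n$ roughly parallel the paper's own first step (the paper reduces instead to $D = D_{d,h}$ with the single-jump filtration $\Fil_m$, which is sharper because it makes the \emph{source} explicit as well). But your proof stops exactly where the lemma actually lives. Everything is reduced to the claim that a map of Banach-Colmez spaces which is $B^+\dR$-linear on realizations is analytic, and you concede that you do not prove this (``Granting it\dots''). That claim is not available at this point of the paper, and cannot be used without circularity: it is essentially Lemma~\ref{lem:B+dR-f-an} together with Proposition~\ref{prop:Hom-dR}, and the paper proves Lemma~\ref{lem:B+dR-f-an} \emph{by means of} Lemma~\ref{lem:E-M-analytic}, while Proposition~\ref{prop:Hom-dR} lies still further downstream. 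Even taken abstractly, the reduction only renames the problem: saying that the graph is ``cut out by a $B^+\dR$-linear equation'' is precisely the statement whose analyticity must be verified, since analytic subspaces are a priori far more restrictive than $B^+\dR$-linear loci.

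The missing content is supplied in the paper by an explicit presentation argument. Having reduced to $E(D) = E_{d,h}$ and $M(D,\Fil_m) = \Hom_K(D, B_m)$, it presents $E_{d,h}$ as an analytic quotient of $E_1^{qh} \oplus E_{r,h}$ via $(x_{i,j};\, y) \mapsto \sum u^i \eta_j x_{i,j} + t^q y$ (this uses the map $\eta$ of Lemma~\ref{lem:Eeta-surj} and the admissible pair construction of Proposition~\ref{prop:ED-BC}), and presents $B_m$ as the quotient of $E_1^m$ via $(x_i) \mapsto \sum u^i x_i \bmod \Fil^m$, using the sequence~(\ref{eq:EmBm}). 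The graph of the reduction map is then the fibre product of these two explicit maps inside $E_1^{m+qh} \times E_{r,h}$, i.e.\ an explicit congruence modulo $\Fil^m B^+\cris$ between $\varphi$-eigenvector expressions in $B^+\cris$, and this locus is checked to be analytically closed; analyticity of $f$ then follows from the graph criterion recalled in \S 1. This direct verification inside $B^+\cris$, with no appeal to $B^+\dR$-realizations, is the genuine content of the lemma; without it, or an independent substitute for your realization principle, your argument is a reformulation of the statement rather than a proof of it.
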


\begin{proof}
It is enough to prove this in the case where~$D = D_{d,h} =
K_0[φ]/(φ^h-p^d)$ and $\Fil$~is the filtration~$\Fil_m$, defined
by~$\Fil_m^{m-1} D_K = D_K$ and~$\Fil_m^{m} D_K = 0$.
In this case, $M(D, \Fil_m) = \Hom_{K} (D, B_m)$.

Let~$d = qr + h$ be
the Euclidean division of~$d$ by~$h$, and let~$η = (η_j)_{j=1,…,h}:
D_{r,h} → B^+\cris(1)$ as in Lemma~\ref{lem:Eeta-surj}; also let~$u ∈
E_1$ be such that~$θ(u) = 1$. According to Proposition~\ref{prop:ED-BC},
the analytic structure on~$E(D) = E_{d,h}$ is given by
\begin{equation}
\application{E_1^{qh} ⊕ E_{r,h}}{E_{d,h}}
{\pa{(x_{i,j})_{\begin{subarray}{l}i=0,…,q-1\\j=1,…,h\end{subarray}}; \quad y}}
{∑ u^i η_j x_{i,j} + t^q y }
\end{equation}
and by Lemma~\ref{lem:M-BC}, that on~$B_m$ is given by
\begin{equation}
\application{E_1^m}{B_m}{(x_i)_{i=0,…,m-1}}{∑ u^i x_i \pmod{\Fil^m}}
\end{equation}
The graph of the reduction map~$f: E(D) → M(D,\Fil)$ is the fibre product
of these two maps. It is analytically closed in~$E_1^{m+qh} × E_{r,h}$;
therefore, $f$~is analytic.
\end{proof}

\begin{lem}\label{lem:B+dR-f-an}
Let $f: M→ M'$ be a $B^+\dR$-linear map between finite-length
$B^+\dR$-modules. Then $f$~is analytic.
\end{lem}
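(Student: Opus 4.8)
The plan is to reduce to the cyclic building blocks $B_m = B^+\dR/\Fil^m B^+\dR$ and then to multiplication maps, the only genuinely new point being that multiplication by an \emph{arbitrary} element of $B^+\dR$ induces an analytic endomorphism of $B_m$. First I would invoke the structure theorem for finite-length modules over the discrete valuation ring $B^+\dR$ to write $M \cong ⨁_i B_{m_i}$ and $M' \cong ⨁_j B_{n_j}$. Since the Banach-Colmez structure of Lemma~\ref{lem:M-BC} is the product structure, the corresponding injections and projections are morphisms of~$\BC$, hence analytic; therefore $f$ is analytic if and only if each of its components $f_{ij}: B_{m_i} → B_{n_j}$ is. This reduces us to a single map $f: B_m → B_n$, which is multiplication by a class~$\bar a ∈ B_n$ with $t^m \bar a = 0$.

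By factoring $f$ through the reduction $B_m → B_n$ (when $m ≥ n$) or through the inclusion $× t^{n-m}: B_m → B_n$ (when $m < n$), I am left to prove that the reduction maps, the inclusions~$× t$, and the multiplication endomorphisms~$× a$ of~$B_n$ (for $a ∈ B^+\dR$) are all analytic. The reduction $B_n → B_{n-1}$ is a quotient map in the abelian category~$\BC$, and the inclusion $× t: B_{n-1} → B_n$ is deduced from the structural map $× t: E(D(-1)) → E(D)$ of Proposition~\ref{prop:ED-BC} applied to $D = D_n$, for which $D_n(-1) = D_{n-1}$; hence multiplication by~$t$, being their composite, is analytic, and so is each~$× t^{n-m}$.

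The heart of the matter is multiplication by a general $a ∈ B^+\dR$, and this is where I expect the main obstacle. One cannot argue through the underlying $\Cp$-linear map, which is \emph{not} analytic in general — indeed the whole content of the full faithfulness in Proposition~\ref{prop:BdR-BC} is that only the $B^+\dR$-linear maps survive, so the failure of $\Cp$ to embed in~$B^+\dR$ forbids any naive splitting. Instead I would expand~$a$ $t$-adically with \emph{crystalline} coefficients: since the sequence~(\ref{eq:EmBm}) for $m = 1$ exhibits $θ: E_1 → \Cp$ as surjective, one may write $a ≡ ∑_{i=0}^{n-1} u_i t^i \pmod{\Fil^n}$ with all $u_i ∈ E_1 ⊂ B^+\cris$. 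As multiplication by~$t$ is already known to be analytic and $\BC$ is additive, it then suffices to show that multiplication by a single $u ∈ E_1$ is an analytic endomorphism of~$B_n$.

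For this last point I would lift to the spaces~$E(D)$. Multiplication by~$u$ carries $E_n = E(D_n)$ into $E_{n+1} = E(D_{n+1})$, and under Proposition~\ref{prop:hom-ED} this map is exactly $E(α)$ for the $K_0[φ]$-linear map $α: D_{n+1} → D_n ⊗_{K_0} B^+\cris$ sending a generator~$e$ to~$e ⊗ u$, which is $φ$-equivariant precisely because $φ(u) = pu$ raises the Newton slope by one; it is therefore analytic. Composing with the analytic quotient $E_{n+1} → B_{n+1}$ and the reduction $B_{n+1} → B_n$ gives an analytic map $E_n → B_n$, namely $x ↦ u x \pmod{\Fil^n}$, which visibly kills $\Qp(n) = \Ker(E_n → B_n)$; by the universal property of the cokernel in the abelian category~$\BC$ it factors through the analytic surjection $E_n → B_n$, and the induced morphism $B_n → B_n$ is exactly~$× u$. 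Being a morphism of~$\BC$, it is analytic, which completes the reduction. The delicate step is thus the replacement of the missing $\Cp$-coefficients by crystalline periods via the surjectivity of~$θ$ on~$E_1$, together with the isocrystal description of analytic maps in Proposition~\ref{prop:hom-ED}, which is what makes every $B^+\dR$-linear map analytically realizable.
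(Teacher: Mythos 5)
Your overall architecture is attractive: decompose into cyclic modules, split a general multiplier $t$-adically as $a \equiv \sum_{i<n} u_i t^i \pmod{\Fil^n}$ with coefficients $u_i \in E_1$ (using the surjectivity of $\theta$ on $E_1$), and realize multiplication by each $u_i$ through the analytic maps $\times u_i : E_n \to E_{n+1}$ supplied by Proposition~\ref{prop:ED-BC}. Those parts, as well as the descent of $\times t : E_{n-1} \to E_n$ to an analytic map $B_{n-1} \to B_n$ via the cokernel property of $B_{n-1} = E_{n-1}/\Qp(n-1)$ in the abelian category $\BC$, are correct. The gap is at the step you treat as purely formal: the analyticity of the reduction maps $B_n \to B_{n-1}$ (equivalently of $E_n \to B_{n-1}$, $x \mapsto x \bmod \Fil^{n-1}$), which you use both to build the $\times t^i$ endomorphisms and, crucially, in the final composition $E_n \to E_{n+1} \to B_{n+1} \to B_n$.

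You assert that the reduction ``is a quotient map in the abelian category $\BC$''. Indeed the quotient of $B_n$ by the sub-object $\Cp(n-1) = t^{n-1}B_n$ exists in $\BC$ and has underlying Banach space $B_{n-1}$; but a priori it carries a \emph{different} Banach-Colmez structure from the one Lemma~\ref{lem:M-BC} puts on $B_{n-1}$, namely $E_{n-1}/\Qp(n-1)$. By the universal property of the cokernel, identifying the two structures (i.e.\ showing the identity map is analytic) is \emph{equivalent} to the analyticity of $E_n \to B_{n-1}$ --- exactly what you need --- so the step is circular as written. Nor can it be recovered from your toolkit: Propositions~\ref{prop:ED-BC} and~\ref{prop:hom-ED} only produce maps that raise the crystalline level (multiplication by $t$, by $u \in E_1$, etc.), never maps that truncate the filtration. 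The missing ingredient is precisely Lemma~\ref{lem:E-M-analytic}, which states that $E(D) \to M(D,\Fil)$ --- in particular $E_m \to B_n$ for $n \le m$ --- is analytic, and whose proof requires the explicit $E_1$-coordinate presentations and the analytic-closedness machinery of~\cite{banach}, not categorical formalism. The paper's own proof leans on the same lemma (to know that $\Fil^m E_{2m}$ is analytically closed in $E_{2m}$) but then finishes much more directly: lift $a$ to $\widehat a \in E_m$, choose $v \in E_m$ with $v \equiv 1 \pmod{\Fil^m}$, and exhibit the graph of $\times a$ as the preimage of $\Fil^m E_{2m}$ under the analytic map $(x,y) \mapsto \widehat a x - v y$ from $E_m \times E_m$ to $E_{2m}$. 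Your argument becomes correct if you replace the ``quotient map'' claim by an appeal to Lemma~\ref{lem:E-M-analytic} for the reductions, but it then remains a longer route to the same destination.
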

\begin{proof}
It is enough to prove it for the multiplication map~$μ: B_m → B_m, x ↦
ax$, where~$a ∈ B_m$. Let~$\widehat a ∈ E_m$ be a lift of~$a$. Then the
graph of~$μ$ in~$E_m × E_m$ is the set
\begin{equation}
G = \acco {(x, y) ∈ E_m × E_m, \quad \widehat{a}x - y ∈ \Fil^m B^+\cris}.
\end{equation}
Let~$v ∈ E_m$ be such that~$v ≡ 1 \pmod{\Fil^m}$.
Then, for all~$(x,y) ∈ E_m × E_m$, we have~$z = \widehat{a} x - v y ∈ E_{2m}$,
and~$(x,y) ∈ G$ if, and only if, $z ∈ \Fil^m E_{2m}$.
By Lemma~\ref{lem:E-M-analytic},
$G$~is analytically closed, and therefore $μ$~is analytic.
\end{proof}

In particular, when applying this lemma to the identity map of any
$B^+\dR$-module~$M$, we see that the analytic structure on~$M$ does not
depend on the choice of $B^+\dR$-generators.

\begin{lem}\label{lem:M-plein}
Let~$M$, $M'$ be two finite-length $B^+\dR$-modules. Then any analytic
map~$f: M → M'$ is $B^+\dR$-linear.
\end{lem}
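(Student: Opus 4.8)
The plan is to reduce to the building blocks $B_m = B^+\dR/\Fil^m B^+\dR$ and then exploit the de Rham realization functor $X \mapsto X^+\dR$ already used in the proof of Proposition~\ref{prop:hom-ED}, whose defining feature is that every analytic morphism $f$ induces a $B^+\dR$-linear map $f^+\dR$. Since every finite-length $B^+\dR$-module is a finite direct sum of copies of the $B_m$, and since the canonical projections and inclusions between such sums and their summands are $B^+\dR$-linear, hence analytic by Lemma~\ref{lem:B+dR-f-an}, an analytic map $f: M → M'$ decomposes into components $B_m → B_{m'}$; it is $B^+\dR$-linear if and only if each component is. So first I would reduce to the case $M = B_m$, $M' = B_{m'}$.

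The heart of the argument is to show that, for a finite-length $B^+\dR$-module $M$ regarded as a Banach-Colmez space, the canonical map $M → M^+\dR$ is an isomorphism of $B^+\dR$-modules. It is enough to check this for $M = B_m$. Here I would use the presentation of Lemma~\ref{lem:M-BC}, namely the exact sequence $0 → \Qp(m) → E_m → B_m → 0$ with $E_m = E(D_m)$ and $D_m = K_0[φ]/(φ-p^m)$. The subspace $\Qp(m)$ is étale, so it should contribute nothing to the de Rham realization; the quotient map would then induce an isomorphism $E_m^+\dR → B_m^+\dR$. Since $\dim B_m = m$ equals the length of the realization, I would identify $B_m^+\dR$ with $B_m$ itself, the natural map $B_m → B_m^+\dR$ being the identity.

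Granting this, the conclusion is immediate: given an analytic map $f: B_m → B_{m'}$, its realization $f^+\dR: B_m^+\dR → B_{m'}^+\dR$ is $B^+\dR$-linear by construction, and under the identifications $B_m^+\dR = B_m$ and $B_{m'}^+\dR = B_{m'}$ it extends $f$ on the underlying space; as these identifications are the identity, we get $f^+\dR = f$, so $f$ is $B^+\dR$-linear.

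The hard part will be the middle paragraph: making precise the de Rham realization $X^+\dR$ and the natural map $X → X^+\dR$ for a general Banach-Colmez space, and proving the two properties I rely on — that the realization vanishes on the étale part (so that $\Qp(m)$ drops out of the sequence above), and that the realization of an honest $B^+\dR$-module is canonically itself. Everything else is formal bookkeeping with the direct-sum reduction and the functoriality of $X \mapsto X^+\dR$ already invoked for Proposition~\ref{prop:hom-ED}.
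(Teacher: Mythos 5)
Your argument is circular: the claim you correctly identify as ``the hard part'' --- that for a finite-length $B^+\dR$-module $M$ the natural map $M \to M^+\dR$ is an isomorphism --- is not a technical verification to be deferred, it \emph{is} the lemma. Unwinding the definitions, $M^+\dR$ is the double dual of $\dualdR{M} = \limi_k \Hom_{\BC}(M, B_k)$, the space of \emph{analytic} maps from $M$ to the $B_k$; since $\Bt$ is an injective cogenerator of torsion $B^+\dR$-modules (Matlis duality over the complete discrete valuation ring $B^+\dR$), one has a split exact sequence $0 \to \Hom_{B^+\dR}(\dualdR{M}/\dual{M}, \Bt) \to M^+\dR \to M \to 0$, so the natural map $M \to M^+\dR$ is an isomorphism if and only if $\dualdR{M} = \dual{M}$, i.e.\ if and only if every analytic map $M \to B_k$ is $B^+\dR$-linear --- exactly the statement to be proven (your direct-sum reduction to targets $B_k$ is the same formal step). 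In the paper, the identities $\dualdR{M} = \dual{M}$ and $M^+\dR = M$ appear only in Section~3.1, downstream of Lemma~\ref{lem:M-plein} and of the full faithfulness of Proposition~\ref{prop:BdR-BC}; and the map $f^+\dR$ invoked in Proposition~\ref{prop:hom-ED} is used there for the spaces $E(D)$, not as an already-established tool on finite-length modules. So your proposal supplies no independent argument for its key step.

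Two subsidiary claims are also false as stated, which shows the intended route through $0 \to \Qp(m) \to E_m \to B_m \to 0$ breaks down even granting a well-behaved envelope functor. The realization does not vanish on \'etale parts: for a finite-dimensional $\Qp$-vector space $V$ the paper computes $V^+\dR = V \otimes_{\Qp} B^+\dR$, a nonzero free module. Consequently $E_m^+\dR$ is the free rank-one module $B^+\dR$ (the envelope of $E_m \subset B^+\cris \subset B^+\dR$), and the induced map $E_m^+\dR \to B_m^+\dR$ is the quotient $B^+\dR \to B_m$, not an isomorphism. What is genuinely missing is the explicit non-degeneracy computation that the paper performs: it reduces (via the sequence $0 \to B_{m-1}(1) \to B_m \to \Cp \to 0$) to showing that $\Hom_{\BC}(B_2, \Cp)$ has $\Cp$-dimension at most one, using the presentation $0 \to V \to E_{2,2} \to B_2 \to 0$, the isomorphism $E_{2,2} \cong E_1 \otimes_{\Qp} \mathbb{Q}_{p^2}$, and the fact that $V = E_{2,2} \cap \Fil^2 B^+\cris$ is not contained in $\mathbb{Q}_{p^2}(1)$. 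Some argument of this concrete kind, ruling out ``exotic'' analytic functionals on $B_2$, cannot be recovered from formal functoriality of $X \mapsto X^+\dR$.
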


\begin{proof}
It is enough to prove that, for any $B^+\dR$-module~$M$ of finite length,
$\Hom_{\BC} (M, \Cp) = \Cp$, i.e. that any analytic morphism is zero
on~$\Fil^1 B_m = B_{m-1}(1)$. Moreover, given the exact sequence of
Banach-Colmez spaces
\begin{equation}
0 → B_{m-1}(1) → B_m → \Cp → 0,
\end{equation}
it is again enough to prove this in the case where~$m = 2$.
According to Lemma~\ref{lem:M-BC}, an analytic structure on~$B_2$
is given by the exact sequence
\begin{equation}
0 → V → E_{2,2} → B_2 → 0,
\end{equation}
where~$E_{2,2} = \acco {x ∈ B^+\cris, φ^2 x = p^2 x}$, and~$V = E_{2,2} ∩
\Fil^2 B^+\cris$ is a $\Qp$-vector space of dimension~two.
Then $\Hom_{\BC} (B_2, \Cp)$ is given by the left-exact sequence
\begin{equation}
0 → \Hom_{\BC} (B_2, \Cp) → \Hom_{\BC} (E_{2,2}, \Cp) →^{F}
\Hom_{\BC} (V, \Cp).
\end{equation}
The space~$E_{2,2}$ is isomorphic to~$E_1 ⊗_{ℚ_p} ℚ_{p^2}$.
Let~$\acco{e, φ(e)}$ be a $ℚ_p$-basis of~$ℚ_{p^2}$,
and define~$f_0, f_1 ∈ \Hom_{\BC} (E_{2,2}, \Cp)$ by
\begin{equation}
f_i (x_0 e + x_1 φ(e)) = θ(x_i), \quad \text{for~$i ∈ \acco{0,1}$};
\end{equation}
Assume that the restriction map
$F: \Hom_{\BC} (E_{2,2}, \Cp) → \Hom_{\BC} (V, \Cp)$ is identically zero.
Then~$F(f_0) = F(f_1) = 0$ imply that any element~$x ∈ V$ may be
written as~$x = x_0 e + x_1 φ(e)$ with~$θ(x_0) = θ(x_1) = 0$. Since~$x_i
∈ E_1$, this implies that~$x_i ∈ \Fil^1 E_1 = \Qp(1)$
and therefore~$V ⊂ ℚ_{p^2}(1)$, which is absurd.
Therefore, $F$~is not zero, and its
kernel~$\Hom_{\BC} (B_2, \Cp)$ thus has dimension (at most) one.
\end{proof}

\begin{lem}\label{lem:lf-ext1}
Let~$E$ be an effective, connected one-dimensional Banach-Colmez space.
Then:
\begin{enumerate}
\item any effective extension of~$\Cp$ by~$E$ is trivial;
\item the map~$θ ∘ - : \Hom (E, E_1) → \Hom (E, \Cp)$ is surjective.
\end{enumerate}
\end{lem}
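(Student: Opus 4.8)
The plan is to prove the two parts by quite different means: (i) is a formal consequence of the structure of effective spaces, whereas (ii) is where the fundamental lemma must be used in an essential, non-formal way.

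For (i), write the given effective extension as $0 \to \Cp \xrightarrow{\iota} X \xrightarrow{q} E \to 0$; then $X$ is an effective Banach-Colmez space, and I fix a presentation $0 \to V \to X \xrightarrow{\rho} L \to 0$ of it, with $V$ a finite-dimensional $\Qp$-vector space and $L$ a finite-dimensional $\Cp$-vector space. The composite $\rho\circ\iota\colon \Cp \to L$ is nonzero (otherwise $\iota(\Cp)\subseteq\ker\rho = V$, impossible by dimension), and since $\Cp$ has no nonzero sub-object of dimension $0$ it is even a monomorphism; its image is therefore a $\Cp$-line $\ell_0\subseteq L$ (a morphism $\Cp\to L$ is $\Cp$-linear, as $\Hom_{\BC}(\Cp,\Cp)=\Cp$). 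I then choose a $\Cp$-linear complement $\ell'\subseteq L$ to $\ell_0$ and put $S:=\rho^{-1}(\ell')$. One has $S\cap\iota(\Cp)=\iota(\Cp)\cap\ker\rho=\iota(\Cp)\cap V=0$, so $q|_S$ is a monomorphism; moreover $\dim S=\dim X-1=\dim E$ and $\haut S=\dim_{\Qp}V=\haut X=\haut E$, so the cokernel of $q|_S\colon S\to E$ has dimension and height $0$ and hence is zero. Thus $q|_S$ is an isomorphism, its inverse is a section of $q$, and the extension splits. Note that this argument uses only the abelian structure, the existence of the presentation, and the simplicity of $\Cp$ (equivalently, well-definedness of the dimension); it uses neither that $E$ is one-dimensional nor that it is connected.

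For (ii), I would first translate the statement homologically. Applying $\Hom_{\BC}(E,-)$ to the exact sequence $0 \to \Qp(1) \to E_1 \xrightarrow{\theta} \Cp \to 0$ of Lemma~\ref{lem:M-BC} gives the exact sequence $\Hom(E,E_1)\xrightarrow{\theta\circ-}\Hom(E,\Cp)\xrightarrow{\partial}\Ext^1(E,\Qp(1))$, so surjectivity of $\theta\circ-$ is equivalent to $\partial=0$, i.e. to the splitting, for every $\lambda\colon E\to\Cp$, of the pulled-back extension $0\to\Qp(1)\to E\times_{\Cp}E_1\to E\to 0$ (for $\lambda=0$ there is nothing to prove, and for $\lambda\neq 0$ the fundamental lemma makes $\lambda$ surjective, so this is genuinely an extension by $\Qp(1)$). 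The method of (i) is of no help here: the sub-object $\Qp(1)$ is connected of dimension $0$, so there is no $\Cp$-part to split off against a presentation.

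Instead I would construct the lift by hand, in the manner of Lemma~\ref{lem:Eeta-surj}. Realize $E$ as $E(D)$ for an effective isocrystal $D$ with slopes in $(0,1]$ and $\tN(D)=1$; by Proposition~\ref{prop:hom-ED} one has $\Hom_{\BC}(E,E_1)=(D\otimes_{K_0}B^+\cris)^{\varphi=p}$, and $\theta\circ-$ is the reduction of such a period modulo $\Fil^1 B^+\dR$. The assertion thus becomes that $\theta$ maps $(D\otimes_{K_0}B^+\cris)^{\varphi=p}$ onto $\Hom_{\BC}(E,\Cp)$: given a $\Cp$-valued functional one must produce a crystalline period with the prescribed reduction and Frobenius eigenvalue $p$. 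This is the main obstacle, and it is precisely the point that cannot be reduced to the abelian formalism; as in Lemma~\ref{lem:Eeta-surj} I would reduce to $K_0=\Qpnr$, choose a $\varphi$-adapted basis of $D$, and solve for the coefficients using the surjectivity of $\theta\colon E_1\to\Cp$ together with the existence of enough elements of $B^+\cris$ satisfying $\varphi=p$.
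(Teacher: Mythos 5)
Both halves of your proposal have genuine gaps, and they are of different kinds.

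In (i) you have the extension in the wrong direction. In this paper, as in the standard convention, an ``extension of $\Cp$ by $E$'' means $0 \to E \to E' \to \Cp \to 0$, with $E$ the sub-object and $\Cp$ the quotient; this is exactly how the lemma is invoked in the proof of Proposition~\ref{prop:ext-dR}, where the sequence $0 \to E' \to E \to \Cp \to 0$ with $E'$ connected is split by citing part~(i). What you split is $0 \to \Cp \to X \to E \to 0$. Your argument for that statement is essentially correct, but it is a different and strictly weaker statement, and your own closing remark --- that you never used connectedness or one-dimensionality of $E$ --- is the warning sign: the statement actually needed fails without connectedness, since $0 \to \Qp(1) \to E_1 \to \Cp \to 0$ is an effective extension of $\Cp$ by the étale space $\Qp(1)$ which does not split (a splitting would give a nonzero analytic surjection from the connected space $E_1$ onto an étale quotient). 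Your complement-in-$L$ device does not transfer to the correct direction either: when $E$ is the sub-object, its image in the $\Cp$-part $L$ of a presentation of $X$ need not be a $\Cp$-line (already $E_{1,h}$, $h \geq 2$, maps to $\Cp^h$ by $x \mapsto (θ(φ^r x))_r$ with image contained in no hyperplane, the functionals $θφ^r$ being $\Cp$-independent by Proposition~\ref{prop:Hom-dR}), and $E$ may meet the étale part of the presentation non-trivially. The paper's proof instead uses the classification of effective extensions \cite[Prop.~2.2.3]{banach}, writing the extension as $E(λ)$, and uses connectedness as injectivity of the transpose of $λ ∘ ι$ to build a retraction; some input of this kind is unavoidable.

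In (ii) you have reformulated the problem but not solved it: ``solve for the coefficients'' \emph{is} the lemma. Worse, the reformulation you fixed --- $\Hom_{\BC}(E,E_1) = (D \otimes_{K_0} B^+\cris)^{φ=p}$ via Proposition~\ref{prop:hom-ED} --- is a dead end. For $D = D_{1,h}$ the equation $φ=p$ couples the coordinates: an element of $(D \otimes_{K_0} B^+\cris)^{φ=p}$ is determined by a single $b_0 \in E_{h-1,h}$, the other coefficients being $p^{-r}φ^r(b_0)$. Hence $\Hom_{\BC}(E,E_1) \cong E_{h-1,h}$ is a Banach-Colmez space of dimension $h-1$, while $\Hom_{\BC}(E,\Cp) \cong \Cp^h$ has dimension $h$ (Proposition~\ref{prop:Hom-dR}, the envelope of $E_{1,h}$ being $(B^+\dR)^h$); under these identifications $θ ∘ -$ is analytic, and an analytic map cannot raise dimension (Proposition~\ref{prop:dh-ker}), so the map you propose to prove surjective cannot be surjective. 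The case $h=1$ shows the collapse immediately: $\Hom_{\BC}(E_1,E_1) = \Qp$, so the image of $θ ∘ -$ is $\Qp \cdot θ \subsetneq \Cp \cdot θ = \Hom_{\BC}(E_1,\Cp)$. The paper's proof takes the one step your formulation forbids: it writes $u = \sum u_r θφ^r$ and lifts the $h$ coefficients \emph{independently}, choosing $f_r \in E_{h-1,h}$ with $θ(f_r) = u_r$ (possible for $h \geq 2$, since $θ$ is surjective on the connected, positive-dimensional space $E_{h-1,h}$ by the fundamental lemma), then sets $f(x) = \sum f_r φ^r(x)$. Such an $f$ takes its values in the eigenspace $\acco{φ^h = p^h} = E_{h,h} \cong E_1 \otimes_{\Qp} \Qph$ rather than literally in $E_1$; this enlarged target is how the conclusion has to be read for the counting to close, and that extra room is exactly what your ``crystalline period with Frobenius eigenvalue $p$'' formulation gives away.
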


\begin{proof}
(i) Let~$E'$ be an effective extension of~$\Cp$ by~$E$.
For any morphism~$u: E' → \Cp$, $u(E)$~is either étale or~$\Cp$.
Since $E$~is connected, there exists~$u: E' → \Cp$ such that~$u(E') = \Cp$.
Define $V' = \Ker u$.
The map~$E' ↪ E$ defines a morphism between the exact sequences
$0 → V' → E' →^{u} \Cp → 0$ and~$0 → V' → E → \Cp ⊕ \Cp → 0$.
Let~$ι$ be the injection of the first summand~$\Cp → \Cp ⊕ \Cp$.
By~\cite[Prop.~2.2.3]{banach}, there exists a $\Cp$-linear map~$λ: ℂ_p^2
→ V ⊗_{\Qp} \Cp(-1)$ such that~$E' = E(λ) = (V ⊗ E_1(-1)) ×_{V ⊗_{\Qp}}
ℂ_p^2$ and~$E = E(λ ∘ ι)$. Since $E$~is connected, the transpose map
$\trans{(λ ∘ ι)} : \Hom(V,\Qp) → \Hom(\Cp, \Cp)$ is injective;
therefore, for any $ℂ_p$-linear retraction~$ρ: ℂ_p^2 → \Cp$ of~$ι$,
there exists a $\Qp$-linear map~$μ: V → V$ such that the diagram
\begin{equation}
\xymatrix{
ℂ_p^2 \ar[r]^{λ\quad}\ar[d]^{ρ}
  & V ⊗_{\Qp} \Cp(-1) \ar[d]^{μ ⊗_{\Qp} \Cp(-1)}\\
\Cp\ar[r]^{λ ∘ ι\qquad} & V ⊗_{\Qp} \Cp(-1)\\
}\end{equation}
commutes. The pair~$(μ, ρ)$ defines a retraction of~$E' → E$.

(ii) By~\cite[Cor.~2.3.13]{banach}, $E$~is isomorphic to~$E_{1,h}$ for
some integer~$h ≥ 1$. Therefore, for any~$u: E → \Cp$, there exist~$u_0,
…, u_{h-1} ∈ \Cp$ such
that~$u = ∑ u_r θ φ^{r}$. Let~$f_0,…,f_{h-1} ∈ E_{h-1,h}$ be such
that~$θ(f_r) = u_r$ for all~$r$, and define~$f(x) = ∑ f_r φ^r(x)$.
Then~$f: E_{1,h} → E_1$ is analytic and~$θ ∘ f = u$.
\end{proof}

\begin{prop}\label{prop:ext-dR}
Let~$M$, $M'$ be two finite-length $B^+\dR$-modules and~$X$ be an
extension of~$M$ by~$M'$ as Banach-Colmez spaces. Then $X$~is a
finite-length $B^+\dR$-module.
\end{prop}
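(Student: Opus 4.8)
The plan is to recast the statement cohomologically and reduce it, by a long exact sequence argument, to a single base case. By additivity of the invariants (Corollary~\ref{cor:dh}) one has $\haut X = \haut M + \haut M' = 0$ and $\dim X = \dim M + \dim M'$, so $X$ is a height-zero space of the expected dimension. The functor $ι$ from finite-length $B^+\dR$-modules to $\BC$ is exact and fully faithful (Proposition~\ref{prop:BdR-BC}, its fullness being Lemma~\ref{lem:M-plein}); it therefore induces isomorphisms on $\Hom$ and a natural map $\Ext^1_{B^+\dR}(M, M') → \Ext^1_{\BC}(M, M')$, which is injective (a $B^+\dR$-extension that splits in $\BC$ already splits $B^+\dR$-linearly, by fullness). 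The proposition is exactly the assertion that this map is \emph{surjective}: the class of $X$ then comes from a genuine $B^+\dR$-module extension, and $X$ is isomorphic in $\BC$ to that finite-length $B^+\dR$-module.

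I would prove surjectivity by induction on $\dim M + \dim M'$. When one of $M, M'$ has dimension $≥ 2$, choose a simple sub- or quotient-module $\Cp$ of it, say a socle line $0 → \Cp → M' → \overline{M'} → 0$, and compare the Yoneda long exact sequences of $\Ext^{\bullet}(M, -)$ in the category of $B^+\dR$-modules and in $\BC$. This gives a commutative ladder
\[ \Ext^1(M, \Cp) → \Ext^1(M, M') → \Ext^1(M, \overline{M'}) → \Ext^2(M, \Cp) \]
whose two outer $\Ext^1$-terms have strictly smaller total dimension, hence surjective comparison maps by the inductive hypothesis, and whose last term vanishes on the $B^+\dR$-side since $B^+\dR$ is a discrete valuation ring, so that $\Ext^2_{B^+\dR}(M, \Cp) = 0$. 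A four-lemma chase then yields surjectivity on the middle term $\Ext^1(M, M')$. (If instead it is $M$ that has dimension $≥ 2$, one runs the contravariant sequence in the first variable.) This collapses the whole problem onto the base case $M = M' = \Cp$.

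The crux, and the step I expect to be the genuine obstacle, is this base case: that $\Ext^1_{B^+\dR}(\Cp, \Cp) = \Cp → \Ext^1_{\BC}(\Cp, \Cp)$ is surjective, i.e. that every Banach-Colmez extension of $\Cp$ by $\Cp$ is isomorphic to $\Cp^2$ or to $B_2 = B^+\dR/\Fil^2 B^+\dR$. It cannot be reached by further dévissage, since two-step extensions preserve total dimension and the inductive quantity stops decreasing. I would instead argue directly, in the explicit style of the computation of $\Hom_{\BC}(B_2, \Cp)$ in Lemma~\ref{lem:M-plein}: realize $\Cp$ as the quotient $E_1 / \Qp(1)$ of the effective connected line $E_1$ (Lemma~\ref{lem:M-BC}) and $B_2$ through $E_{2,2}$ as in Lemma~\ref{lem:M-plein}, observe that any extension of $\Cp$ by $\Cp$ is assembled from effective spaces and is therefore constrained by the fundamental lemma, and use the structure of effective connected lines together with Lemma~\ref{lem:lf-ext1} to show that the class of $B_2$ generates $\Ext^1_{\BC}(\Cp, \Cp)$ over $\Cp$. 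Ruling out \emph{exotic}, non-$B^+\dR$-linear self-extensions of $\Cp$ is exactly where the analytic input is needed; once that is in hand, the homological induction above propagates it to arbitrary $M$ and $M'$ and completes the proof.
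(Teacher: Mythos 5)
Your homological reduction is sound, and it in fact supplies the justification that the paper compresses into the single sentence ``it is enough to prove this when $M = M' = \Cp$'': the six-term Yoneda $\Ext$ sequences in the abelian category $\BC$ (Theorem~\ref{thm:ab}) and in finite-length $B^+\dR$-modules, the vanishing of $\Ext^2_{B^+\dR}(M,\Cp)$ because $B^+\dR$ is a discrete valuation ring, the compatibility of the two ladders under the exact fully faithful functor of Proposition~\ref{prop:BdR-BC}, and the chase are all correct, as is the injectivity remark via fullness (Lemma~\ref{lem:M-plein}).

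The gap is that your argument stops exactly where the content of the proposition begins. For the base case you list ingredients --- effective presentations, the fundamental lemma, Lemma~\ref{lem:lf-ext1} --- and then assert the conclusion that the class of $B_2$ generates $\Ext^1_{\BC}(\Cp,\Cp(1))$ over $\Cp$; but that assertion \emph{is} the proposition (after your own reduction), and no argument is given for it. What is needed, and what constitutes essentially all of the paper's proof, is an explicit comparison map to $B_2$: starting from an analytic extension $0 \to \Cp(1) \to X \to \Cp \to 0$, one chooses presentations and fibre products so that the extension pulls back to an exact sequence of \emph{effective} spaces $0 \to E' \to E \to \Cp \to 0$ with $E'$ connected, where $E'$ sits in $0 \to V' \to E' \to \Cp(1) \to 0$ with surjection $\pi'$; Lemma~\ref{lem:lf-ext1}~(i) splits this effective extension, giving $E \cong E' \oplus \Cp$, and Lemma~\ref{lem:lf-ext1}~(ii) lifts $\pi'$ through $\theta$ to an analytic map $u \colon E' \to E_1$; then, using the presentation $0 \to \Qp(1) \to E_1 \oplus \Cp(1) \to B_2 \to 0$, the map $u \oplus \id$ descends to an analytic morphism $f \colon X \to B_2$ inducing the identity on the quotient $\Cp$. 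The restriction of $f$ to the sub-object $\Cp(1)$ is an analytic endomorphism of $\Cp$, hence multiplication by a scalar $c \in \Cp$ (by fullness again), and the resulting morphism of extensions shows that the class of $X$ lies on the $\Cp$-line spanned by the class of $B_2$, i.e. comes from a $B^+\dR$-module extension, whence $\Ext^1_{\BC}(\Cp,\Cp(1)) = \Ext^1_{B^+\dR}(\Cp,\Cp(1)) = \Cp$. Constructing this map $f$ extending $\id_{\Cp}$ --- using both halves of Lemma~\ref{lem:lf-ext1}, not merely citing it --- is precisely the analytic input you flagged as the obstacle but did not carry out; without it, the proposal does not prove the statement.
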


\begin{proof}
It is enough to prove this when~$M = M' = \Cp$; by commodity, we
write~$M' = \Cp(1)$. Let~$X$ be an analytic extension of~$\Cp$
by~$\Cp(1)$. The analytic map~$\Cp(1) → X$ corresponds to a pair~$(V' →
V, E' → E)$, where~$0 → V' → E' →^{π'} \Cp(1) → 0$~is exact and~$0 → V →
E → X → 0$~is a presentation of~$X$. Likewise, the analytic map~$X → \Cp$
corresponds to a map~$E_1 → \Cp$, where~$0 → V_1 → E_1 → X$~is another
presentation of~$X$. By taking fibre products, we may assume that~$E_1 =
E$; the exact sequence~$0 → \Cp(1) → X → \Cp → 0$ then pulls back to the
exact sequence of effective Banach-Colmez spaces
\begin{equation}
0 → E' → E → \Cp → 0.
\end{equation}
Moreover, since $\Cp(1)$~is connected, we may assume that~$E'$~is
connected. By Lemma~\ref{lem:lf-ext1}~(i), $E$~is therefore isomorphic
to~$E' ⊕ \Cp$.

The analytic structure on~$B_2$ is given by the exact sequence~$0 →
\Qp(1) → E_1 ⊕ \Cp(1) → B_2 → 0$. By~\ref{lem:lf-ext1}~(ii), there
exist~$u: E' → E_1$ such that~$θ ∘ u = π' : E' → \Cp(1)$. The map~$u ⊕
\id_{\Cp}: E' ⊕ \Cp → E_1 ⊕ \Cp(1)$ defines an analytic morphism~$f: X →
B_2$ extending the identity morphism of~$\Cp$. The extension~$X$ is then
given by the restriction of~$f$ to~$\Cp(1)$, which is an analytic
endomorphism of~$\Cp$ and therefore multiplication by an element
of~$\Cp$. We finally see that~$\Ext^1_{\BC} (\Cp, \Cp(1)) =
\Ext^1_{B^+\dR} (\Cp, \Cp(1)) = \Cp$.
\end{proof}

\subsection{Weakly admissible implies admissible}

A filtered isocrystal~$(D, \Fil)$ is called \emph{admissible} if
$V(D,\Fil)$~is a $\Qp$-vector space of dimension equal to the rank
of~$D$. It is called \emph{weakly
admissible} if~$\tH(D) = \tN(D)$ and, for any subobject~$D'$ of~$D$,
$\tH(D') ≤ \tN(D')$. The ``weakly admissible implies admissible'' theorem
was first proven in~\cite{CF2000}. We give an independent proof of this
theorem.

\begin{thm}\label{thm:FAA}
Let~$K$ be a closed subfield of~$ℂ_p$ with discrete valuation.
Any weakly admissible filtered $K$-isocrystal is admissible.
\end{thm}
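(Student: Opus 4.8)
The plan is to read off admissibility from the left exact sequence~(\ref{eq:VEM}) by a count of dimensions and heights, invoking the sub-object inequalities of weak admissibility at exactly one point. After a Tate twist---which changes $\tN$, $\tH$ and all the filtration jumps by~$rh$ and hence preserves both $\tH-\tN$ and the sub-object inequalities---I may assume that $D$~is effective, so that $E(D)$, $M(D,\Fil)$ and~$V(D,\Fil)$ are genuine Banach-Colmez spaces. By Proposition~\ref{prop:ED-BC}, $E(D)$~has dimension~$\tN(D)$ and height~$h=\dim_{K_0}D$; by Proposition~\ref{prop:BdR-BC}, the finite-length $B^+\dR$-module~$M(D,\Fil)$ has dimension equal to its length~$\tH(D)$ and height~$0$. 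Writing~$π$ for the analytic map of~(\ref{eq:VEM}), so that $V(D,\Fil)=\Ker π$, the abelian structure (Theorem~\ref{thm:ab}) and additivity of the dimension give
\[ \dim V(D,\Fil)+\dim π(E(D))=\tN(D),\qquad \dim π(E(D))+\dim\Coker π=\tH(D); \]
since $\tH(D)=\tN(D)$, these yield $\dim V(D,\Fil)=\dim\Coker π$. As $M(D,\Fil)$~is a successive extension of copies of~$\Cp$, it has no nonzero étale quotient, so $\Coker π$~is connected and $\dim\Coker π=0$ forces $\Coker π=0$. Thus the whole statement reduces to the surjectivity of~$π$: once $π$~is onto, the height count along the resulting short exact sequence gives $\haut V(D,\Fil)=\haut E(D)-\haut M(D,\Fil)=h$, while $\dim V(D,\Fil)=0$ makes $V(D,\Fil)$~a $\Qp$-vector space, necessarily of dimension~$h$ --- which is precisely admissibility.

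The content of weak admissibility is therefore concentrated in the surjectivity of~$π$, and this is where the inequalities~$\tH(D')≤\tN(D')$ must enter; I expect this to be the main obstacle, since the equality $\tH=\tN$ alone is visibly insufficient to pin~$\dim V(D,\Fil)$. I would argue by contradiction: if $π$~is not surjective then, by the reduction above, $V(D,\Fil)=\Ker π$~has a nonzero connected part, equivalently $M(D,\Fil)$~admits a nonzero quotient not hit by the crystalline periods. Using the contravariant functoriality that makes quotients of~$M(D,\Fil)$ correspond to sub-isocrystals of~$D$, together with the description of morphisms furnished by Proposition~\ref{prop:hom-ED}, this failure of surjectivity should single out a proper sub-isocrystal~$D'⊆D$ along which the crystalline periods are forced into the Hodge filtration. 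Running the dimension-and-height count of the first paragraph for~$D'$ with its induced filtration then produces the strict inequality $\tH(D')>\tN(D')$, contradicting weak admissibility.

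The delicate step, and the one I would write out most carefully, is this analytic extraction: identifying the connected part of~$\Ker π$ (or the unfilled quotient of~$M(D,\Fil)$) with the Banach-Colmez space attached to a genuine sub-isocrystal, and checking that membership in~$\Ker π$ really translates into the numerical inequality $\tH(D')>\tN(D')$. This is exactly the dictionary between sub-objects of Banach-Colmez spaces and sub-isocrystals provided by Propositions~\ref{prop:ED-BC} and~\ref{prop:hom-ED}, and I would organise it as an induction on~$h=\dim_{K_0}D$, splitting off a minimal destabilising sub-isocrystal and comparing the exact sequences of~$E$, $M$ and~$V$ attached to $0→D'→D→D/D'→0$. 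The base case $h=1$ is immediate, as a rank-one effective isocrystal with $\tH=\tN$ has $E(D)→M(D,\Fil)$ surjective already by the fundamental lemma underlying Proposition~\ref{prop:ED-BC}.
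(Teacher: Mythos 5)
Your first paragraph is, in substance, the paper's own proof: additivity of the dimension on the left-exact sequence~(\ref{eq:VEM}), the observation that $\Coker\pi$ is connected (being a quotient of the connected space~$M(D,\Fil)$), so that dimension zero forces it to vanish, and the height count giving $\dim_{\Qp} V(D,\Fil) = \haut E(D) = \dim_{K_0} D$ once $\pi$~is surjective. Your reduction of the theorem to the single claim that $V(D,\Fil)$~has dimension zero is exactly where the paper also concentrates the weak admissibility hypothesis (the preliminary Tate twist is harmless).

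The gap is in how you discharge that claim. The paper does it by citation: by the elementary argument of~\cite[4.5]{CF2000}, the inequalities $\tH(D')\leq\tN(D')$ for all subobjects~$D'$, together with the discreteness of the valuation of~$K$, imply that $V(D,\Fil)$~is a finite-dimensional $\Qp$-vector space, hence \'etale, hence of dimension zero --- an argument on periods that uses no Banach-Colmez theory at all. You instead propose to extract, from the non-vanishing of the connected part of~$\Ker\pi$, a sub-isocrystal $D'\subset D$ with $\tH(D')>\tN(D')$, via a ``dictionary'' between quotients of~$M(D,\Fil)$ (or sub-objects of~$\Ker\pi$) and sub-isocrystals of~$D$. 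No such dictionary exists, and this is precisely where your sketch would fail. A sub-isocrystal does give a quotient of~$M(D,\Fil)$, but the converse is false: $B^+\dR$-module quotients of~$M(D,\Fil)$ are far more numerous than sub-isocrystals, and $\Coker\pi$~is in any case only a Banach-Colmez quotient --- the image of~$\pi$ need not be a $B^+\dR$-submodule at all. Likewise, Proposition~\ref{prop:hom-ED} identifies $\Hom_{\BC}(E(D'),E(D))$ with $\Hom_{K_0[\varphi]}(D, D'\otimes_{K_0}B^+\cris)$, with coefficients in~$B^+\cris$ rather than~$K_0$, so sub-Banach-Colmez spaces of~$E(D)$ --- in particular the connected part of~$\Ker\pi$ --- do not correspond to sub-isocrystals of~$D$; even the slope filtration of Theorem~\ref{thm:fil} would only produce subobjects isomorphic to some~$E_{d,h}$, i.e. maps $D\to D_{d,h}\otimes_{K_0}B^+\cris$, and promoting such a map to a genuine sub-isocrystal violating weak admissibility is the hard content of the Colmez--Fontaine theory, supplied neither by the paper's results nor by your sketch. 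Note also that your induction, which begins by ``splitting off a minimal destabilising sub-isocrystal'', presupposes the very object the contradiction is supposed to produce: a weakly admissible~$D$ has no destabilising subobject. The repair is immediate: replace your last two paragraphs by the appeal to~\cite[4.5]{CF2000}, as the paper does.
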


\begin{proof}
For any filtered isocrystal~$D$, the left-exact sequence
\begin{equation}\label{eq:VEM2}
0 → V(D,\Fil) → E(D) → M(D,\Fil)
\end{equation}
is analytic by Lemma~\ref{lem:E-M-analytic}.
If $(D, \Fil)$~is weakly admissible, then $\tH(D) =
\tN(D)$ implies that~$\dim M(D) = \dim E(D)$;
and $\tH(D') ≤ \tN(D')$ for all subobjects~$D'$ of~$D$,
with $K$~having discrete valuation,
implies that $V(D)$~is a finite-dimensional $\Qp$-vector space
(an elementary proof is given in~\cite[4.5]{CF2000}).
Therefore, the cokernel~$X$ of~$E(D) → M(D)$ has dimension zero,
and is thus an étale Banach-Colmez space.
Since $M(D)$~is connected, this implies that~$X = 0$,
\emph{i.e.} that~$E(D) → M(D)$~is surjective.

Moreover, by counting heights on the exact
sequence~\ref{eq:VEM2}, we find that~$\dim_\Qp V(D,\Fil) = \haut V(D,\Fil) = 
\haut E(D) = \dim_{K_0} D$. Therefore, $(D, \Fil)$~is admissible.
\end{proof}
\section{The slope filtration}
\subsection{The universal extension of~$\Bt$ by~$\Qp$}

The goal of this part is to define a left adjunct to the functor
from finite-length~$B^+\dR$-modules to Banach-Colmez spaces.

Let~$\Bt = B\dR / B^+\dR$ and~$\Be = \acco {x ∈ B\cris, φ(x) = x}$.
Then~$\Bt = \limi B_m(-m)$ and~$\Be = \limi E_m(-m)$.
The direct limit of the exact sequence~\eqref{eq:EmBm} reads
\begin{equation}\label{eq:BeBt}
0 → \Qp → \Be → \Bt → 0
\end{equation}
where the objects are inductive limits of Banach-Colmez spaces.

For any Banach-Colmez space~$X$, define
\begin{gather}
\dualdR{X} = \Hom_{\mathrm{ind}-\BC} (X, \Bt) = \limi_{m} \Hom_{\BC} (X, B_m),\\
\notag
X^+\dR = \dual{(\dualdR{X})} = \Hom_{B^+\dR} (\dualdR{X}, \Bt), \quad
X\dR = X^+\dR ⊗_{B^+\dR} B\dR.
\end{gather}
In particular, for any finite-length $B^+\dR$-module $M$, we
have~$\dualdR{M} = \dual{M}$ and~$M^+\dR = M$; for any finite-dimensional
$\Qp$-vector space, $\dualdR{V} = \dual{V} ⊗_{\Qp} \Bt$ and~$V^{+}\dR
= V ⊗_{\Qp} B^+\dR$.

\begin{prop}\label{prop:ext1}
Let $M$~be a finite-length $B^+\dR$-module and $V$~be a
finite-dimensional $\Qp$-vector space. Then there is a canonical
functorial isomorphism
\[ \Ext^1_{\BC} (M, V) \;=\; \Hom_{B^+\dR} (M, V ⊗_{\Qp} \Bt). \]
\end{prop}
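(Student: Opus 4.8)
The plan is to realize the asserted isomorphism as the connecting map of the long exact sequence attached to the universal extension~\eqref{eq:BeBt}. Since both sides of the claimed equality are additive in~$V$, I may reduce to the case $V = \Qp$. The natural map goes from right to left: given a $B^+\dR$-linear map $\phi: M → \Bt$, the finiteness of~$M$ forces $\phi$ to factor through one of the stages $E_m(-m) \subset \Bt$, so that the fibre product $M ×_{\Bt} \Be$ is a genuine Banach-Colmez space, and I send~$\phi$ to the class of the pulled-back extension $0 → \Qp → M ×_{\Bt}\Be → M → 0$ in $\Ext^1_{\BC}(M,\Qp)$. This construction is visibly functorial in~$M$ and~$V$, which gives the functoriality in the statement for free, and it is exactly the connecting homomorphism~$\partial$ of the long exact sequence obtained by applying $\Hom_{\BC}(M,-)$ (computed in $\mathrm{ind}$-$\BC$) to~\eqref{eq:BeBt}. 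Using Lemma~\ref{lem:M-plein} to identify $\Hom_{\BC}(M,\Bt) = \Hom_{B^+\dR}(M,\Bt)$, the proposition reduces to the two vanishing statements $\Hom_{\BC}(M,\Be) = 0$ and $\Ext^1_{\BC}(M,\Be) = 0$: the first makes~$\partial$ injective, the second makes it surjective.

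Both vanishings are dévissées to the case $M = \Cp$. Indeed every finite-length $B^+\dR$-module is a finite iterated extension of copies of $\Cp = B_1$ (up to twist), and $\Hom_{\BC}(-,\Be)$ and $\Ext^1_{\BC}(-,\Be)$ fit into the six-term exact sequences of the abelian category~$\BC$ (Theorem~\ref{thm:ab}), so an induction on the length of~$M$ reduces everything to $M = \Cp$. For $\Hom_{\BC}(\Cp,\Be) = 0$: a map $g: \Cp → \Be$ projects to a $B^+\dR$-linear map $\bar g: \Cp → \Bt$, which lands in the $t$-torsion $\Bt[t] = \Cp(-1)$. If $\bar g ≠ 0$ it is an isomorphism onto~$\Cp(-1)$, and~$g$ is then a splitting of the pullback of~\eqref{eq:BeBt} along~$\bar g$; but this pullback is the stage-one sequence~\eqref{eq:EmBm} twisted by~$(-1)$, namely $0 → \Qp → E_1(-1) → \Cp(-1) → 0$, which is non-split because $E_1$~is connected. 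Hence $\bar g = 0$, so~$g$ factors through $\ker(\Be → \Bt) = \Qp$, and $\Hom_{\BC}(\Cp,\Qp) = 0$ as $\Cp$~is connected; thus $g = 0$.

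The hard part will be $\Ext^1_{\BC}(\Cp,\Be) = 0$, i.e.\ that every extension of~$\Cp$ by~$\Be$ splits. Writing $\Ext^1_{\BC}(\Cp,\Be) = \limi_m \Ext^1_{\BC}(\Cp, E_m(-m))$, I would proceed by induction on~$m$ along the short exact sequences $0 → E_{m-1}(1) → E_m → \Cp → 0$ relating consecutive stages: applying $\Ext^{\bullet}_{\BC}(\Cp,-)$ and untwisting identifies $\Ext^1_{\BC}(\Cp, E_{m-1}(1))$ with the inductive term, while the remaining contribution is an extension of~$\Cp$ by~$\Cp$, which by Proposition~\ref{prop:ext-dR} is a finite-length $B^+\dR$-module; the point is to check that these $B^+\dR$-module classes die once one passes to the colimit in~$m$, the base case $m=1$ being precisely the triviality of effective extensions of~$\Cp$ by the connected one-dimensional space~$E_1$ furnished by Lemma~\ref{lem:lf-ext1}(i). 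An alternative, perhaps cleaner, route to the same end is to observe that $\Bt = B\dR/B^+\dR$ is the injective hull of~$\Cp$ over the discrete valuation ring~$B^+\dR$, so that $\Hom_{B^+\dR}(-,\Bt)$ is exact on finite-length modules; combined with the vanishing of $\Hom_{\BC}(M,\Qp)$ and a right-exactness (cohomological dimension one) input for $\Ext^1_{\BC}(-,\Qp)$, this makes both sides exact functors of~$M$ and again reduces the whole statement to the single computation $\Ext^1_{\BC}(\Cp,\Qp) = \Cp(-1) = \Hom_{B^+\dR}(\Cp,\Bt)$, carried out exactly as in Proposition~\ref{prop:ext-dR}.
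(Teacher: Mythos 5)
Your comparison map is the paper's own: pull the universal extension $0 \to V \to V \otimes_{\Qp} \Be \to V \otimes_{\Qp} \Bt \to 0$ back along $\lambda$, using the finite length of~$M$ to land in a finite stage; and your injectivity half ($\Hom_{\BC}(M,\Be)=0$ by dévissage to $M=\Cp$) is correct, indeed more detailed than what the paper writes. The gap is the surjectivity half --- that every class in $\Ext^1_{\BC}(M,V)$ arises by pullback, in your formulation the vanishing $\Ext^1_{\BC}(M,\Be)=0$ --- and neither of your two routes establishes it.

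The base case you propose for the first route is false as stated: Lemma~\ref{lem:lf-ext1}(i) concerns \emph{effective} extensions, not the Yoneda $\Ext^1$ of the abelian category~$\BC$, and the two genuinely differ here. Indeed $\Ext^1_{\BC}(\Cp, E_1(1)) \neq 0$: the sequence $0 \to E_1(1) \to E_2 \to \Cp \to 0$, where $E_1(1) = tE_1$ is the kernel of $\theta$ on $E_2$, is non-split, because its pushout along $E_1(1) \to E_1(1)/\Qp(2) = \Cp(1)$ is $0 \to \Cp(1) \to B_2 \to \Cp \to 0$, and an analytic splitting of that sequence would be $B^+\dR$-linear by Lemma~\ref{lem:M-plein}, which is absurd. (This does not contradict Lemma~\ref{lem:lf-ext1}, precisely because that extension is not effective.) Twisting, $\Ext^1_{\BC}(\Cp, E_m(-m)) \neq 0$ for \emph{every}~$m$, so there is no term-wise vanishing; classes can only die in the colimit. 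Moreover, checking that they die is not a deferrable detail: since $\Ext^1_{\BC}(\Cp,\Bt)=0$ (as in Prop.~\ref{prop:dR-ex-fid}), the long exact sequence for~\eqref{eq:BeBt} identifies $\Ext^1_{\BC}(\Cp,\Be)$ with the cokernel of $\partial \colon \Hom_{\BC}(\Cp,\Bt) \to \Ext^1_{\BC}(\Cp,\Qp)$, so its vanishing \emph{is} the case $M=\Cp$ of the proposition --- your reduction is circular. The second route is circular at the same point: Proposition~\ref{prop:ext-dR} computes $\Ext^1_{\BC}(\Cp,\Cp(1))$, extensions by the connected space $\Cp(1)$, not $\Ext^1_{\BC}(\Cp,\Qp)$, and its method (splitting effective sequences via Lemma~\ref{lem:lf-ext1}) does not transfer to extensions by the étale space~$\Qp$. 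What both routes are missing is the classification of $\BC$-extensions of~$\Cp$ by a finite-dimensional $\Qp$-vector space: because the kernel is étale, such an extension is automatically effective, and the classification is \cite[Prop.~2.2.3]{banach}. That citation is exactly how the paper disposes of the base case $M=\Cp$, before inducting on~$m$ along $0 \to B_m(1) \to B_{m+1} \to \Cp \to 0$; some input of this kind is unavoidable, and your proposal never supplies it.
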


\begin{proof}
Let~$λ: M → V ⊗_{\Qp} \Bt$ be a $B^+\dR$-linear map. Then we may form the
diagram
\begin{equation}\label{diag:ext-univ}
\xymatrix{0 \ar[r] & V \ar[r] & V ⊗_{\Qp} \Be \ar[r] &
V ⊗_{\Qp} \Bt \ar[r] & 0 \\
0 \ar[r] & V\ar@{=}[u]\ar[r] & E(λ) \ar[u]\ar[r] & M\ar[u]^{λ} \ar[r] & 0
}\end{equation}
Since $M$~has finite length,
$λ$~factors through $V ⊗_{\Qp} B_m(-m)$ for some integer~$m$
and the diagram~(\ref{diag:ext-univ}) is actually
a diagram of Banach-Colmez spaces.
Therefore the fibre product~$E(λ)$~is a Banach-Colmez space.

To show that every extension of~$M$ is of this type,
it is enough to prove the result for~$M = B_m$.
We proceed by induction on~$m$. The result then follows from
the exact sequence~$0 → B_m(1) → B_{m+1} → \Cp → 0$ and the case where~$M =
\Cp$, proven in~\cite[2.2.3]{banach}.
\end{proof}

\subsection{Constructible Banach-Colmez spaces}

\begin{prop}\label{prop:constructible}
Let~$X$ be a Banach-Colmez space. The following conditions are
equivalent:
\begin{enumerate}
\item $X$~is an extension of a finite-length $B^+\dR$-module by a
finite-dimensional $\Qp$-vector space;
\item the envelope map~$ X → X^+\dR$ is injective;
\item there exists an analytic embedding of~$X$ in a finite-type
$B^+\dR$-module.
\end{enumerate}
\end{prop}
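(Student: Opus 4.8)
The plan is to prove the cycle $(i)\Rightarrow(ii)\Rightarrow(iii)\Rightarrow(i)$. The unifying tool is the envelope functor $X↦X^+\dR$ together with its evaluation map $X→X^+\dR$, $x↦(φ↦φ(x))$, and the two identities recorded above the statement: $M^+\dR=M$ for a finite-length $B^+\dR$-module $M$, with evaluation map the identity, and $V^+\dR=V⊗_{\Qp}B^+\dR$ for a finite-dimensional $\Qp$-vector space $V$, with evaluation map $v↦v⊗1$, which is injective. I shall also use that $\Bt=B\dR/B^+\dR$ is the injective hull of $\Cp$ over the discrete valuation ring $B^+\dR$, so that $\Hom_{B^+\dR}(-,\Bt)$ is Matlis duality: it is exact, interchanges finite-length and finite-type modules, satisfies $\Hom_{B^+\dR}(\Bt,\Bt)=B^+\dR$, and makes finite-length modules reflexive.

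For $(i)\Rightarrow(ii)$ I would apply the envelope functor to a presentation $0→V→X→M→0$. The contravariant functor $\dualdR{(-)}=\limi_m\Hom_{\BC}(-,B_m)$ produces $0→\dual M→\dualdR X→\dual V⊗_{\Qp}\Bt→\Ext^1_{\mathrm{ind}-\BC}(M,\Bt)$, and the essential point is that the last term vanishes, so that this is a short exact sequence; I expect this from the injectivity of $\Bt$, computed exactly as in Proposition~\ref{prop:ext1}. Applying the exact functor $\Hom_{B^+\dR}(-,\Bt)$ and using $\Hom_{B^+\dR}(\dual V⊗_{\Qp}\Bt,\Bt)=V⊗_{\Qp}B^+\dR$ then yields $0→V⊗_{\Qp}B^+\dR→X^+\dR→M→0$. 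The evaluation maps assemble into a morphism from $0→V→X→M→0$ to this sequence whose outer arrows are $v↦v⊗1$ (injective) and $\id_M$ (bijective), so the five lemma forces $X→X^+\dR$ to be injective.

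The implication $(ii)\Rightarrow(iii)$ is then formal: by Matlis duality $X^+\dR=\Hom_{B^+\dR}(\dualdR X,\Bt)$ is a finite-type $B^+\dR$-module, its evaluation map is analytic, and under $(ii)$ it is injective, so $N=X^+\dR$ together with the evaluation map furnishes the required embedding. For $(iii)\Rightarrow(i)$, suppose $X↪N$ with $N$ finite-type over $B^+\dR$. For $m$ large the quotient $N/t^mN$ is a finite-length $B^+\dR$-module, hence a Banach-Colmez space, and I would consider the kernels $Y_m=\Ker(X→N/t^mN)$. These form a decreasing family with $\bigcap_m Y_m=\Ker(X→N)=0$ since $N$ is $t$-adically complete. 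Their dimensions are non-increasing and bounded below by~$0$, so they stabilize past some~$m_0$; for $m≥m_0$ the quotients $Y_m/Y_{m+1}$ then have dimension zero, so the cumulative quotients $Y_{m_0}/Y_m$ are étale and factor through $π_0(Y_{m_0})$, whence their heights—and so the heights $\haut Y_m$—are bounded; both invariants thus stabilize, and by Proposition~\ref{prop:quo} the family is eventually zero. Thus $X↪N/t^mN$ for some~$m$, and it remains to prove that every sub-Banach-Colmez space of a finite-length $B^+\dR$-module is of the form~$(i)$. I would argue this by induction on the length, the case $\Cp$ reducing to $\mathrm{Sub}(\Cp)=\acco{0,\Cp}$ (an étale space has no nonzero analytic map to $\Cp$, and $\Cp$ has no nonzero étale quotient), and the inductive step, applied to $0→M'→M→\Cp(j)→0$ and $Y↪M$, recombining $Y∩M'$ (form~$(i)$ by induction) with $Y/(Y∩M')↪\Cp(j)$ by means of Proposition~\ref{prop:ext-dR}.

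I expect the main obstacle to be the structural lemma just invoked, with its two ingredients: that a sub-Banach-Colmez space of a finite-length module need not itself be a $B^+\dR$-submodule—as already the inclusion $E_m↪B_{m+1}$ shows, its image having height one—yet is always an extension of a finite-length module by a $\Qp$-vector space; and that the decreasing family $Y_m$ genuinely stabilizes, which requires controlling dimension and height simultaneously. A secondary difficulty is the vanishing $\Ext^1_{\mathrm{ind}-\BC}(M,\Bt)=0$ used in $(i)\Rightarrow(ii)$ together with the finite-type-ness of $X^+\dR$, both of which I would handle through the Matlis-duality formalism over $B^+\dR$.
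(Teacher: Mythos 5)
Your proposal is correct, and on the first two implications it follows essentially the paper's own route: the paper likewise proves (i)$\Rightarrow$(ii) by dualizing a presentation $0 \to V \to X \to M \to 0$ into a short exact sequence $0 \to \dualdR{M} \to \dualdR{X} \to \dualdR{V} \to 0$ and then dualizing back to $0 \to V^+\dR \to X^+\dR \to M \to 0$, concluding by injectivity of the outer evaluation maps; the only cosmetic difference is that the paper obtains the right-hand surjectivity by extending functionals along $X \subset V \otimes_{\Qp} B\dR$ (via the classification of Proposition~\ref{prop:ext1}) instead of quoting the vanishing $\limi_m \Ext^1_{\BC}(M, B_m(-m)) = 0$, which is the same mechanism (Proposition~\ref{prop:ext-dR} plus injectivity of $\Bt$ over $B^+\dR$) and is not circular, since \ref{prop:ext-dR} precedes the statement. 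The genuine divergence is in (iii)$\Rightarrow$(i). The paper reduces at once to $N = B_m$ and shows by induction on $m$ that some finite-length quotient $M$ of $N$ has $X \to M$ surjective with étale kernel, applying the fundamental lemma to $X'' = X/(X \times_{B_{m+1}} B_m(1)) \hookrightarrow \Cp$. You instead first dispose of the free part of $N$ by the stabilization of the kernels $Y_m = \Ker(X \to N/t^m N)$ --- a dimension-then-height count that the paper glosses over entirely with its ``we may assume $N = B_m$'', so this is a real gain in completeness --- and then classify sub-Banach-Colmez spaces of finite-length modules by induction on the length, gluing $Y \cap M'$ to $Y/(Y \cap M') \subset \Cp(j)$ by means of Proposition~\ref{prop:ext-dR}, where the paper instead constructs the étale-kernel quotient directly. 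Both inductions rest on the same base case, namely that the only sub-Banach-Colmez spaces of $\Cp$ are $0$ and $\Cp$, which follows from the connected-étale sequence and the fundamental lemma exactly as in Proposition~\ref{prop:quo}. One point to tighten: in (ii)$\Rightarrow$(iii) you assert that $X^+\dR$ is finite-type ``by Matlis duality'', but duality only converts \emph{artinian} modules into finite-type ones, so you still need the paper's observation that any Banach-Colmez space is a quotient of an extension of a finite-length module by a finite-dimensional $\Qp$-vector space, whence $\dualdR{X}$ embeds into an extension of $\dual{V} \otimes_{\Qp} \Bt$ by $\dual{M}$ and is artinian; with that supplement, your Matlis formalism does close both this step and the deferred $\Ext$-vanishing.
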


\begin{proof}
(i) $⇒$ (ii): by Proposition~\ref{prop:ext1}, we may assume that~$X =
E(λ)$ for a $B^+\dR$-linear map~$λ: M → V ⊗_{\Qp} B^+\dR$. Let~$X'$ be
the $B^+\dR$-submodule of~$V ⊗_{\Qp} B\dR$ generated by~$X$. 
Any~$v ∈ \dualdR{V}$ extends to a $B^+\dR$-linear
map~$v: V ⊗_{\Qp} B^+\dR → \Bt$, and the composed map~$X → X' → V ⊗ B\dR
→^v \Bt$ is analytic. Therefore, the sequence~$0 → \dualdR{M} → \dualdR{X} →
\dualdR{V} → 0$ is (right-)exact, and its dual sequence~$0 → V^+\dR →
X^+\dR → M → 0$~is exact. Since both maps~$V → V^+\dR$ and~$M → M$ are
injective, we deduce that $X→X^+\dR$~is injective.

(ii) $⇒$ (iii): since $X$~is a quotient of extensions of finite-length
$B^+\dR$-modules and finite-dimensional $\Qp$-vector spaces, we see that
$X^+\dR$~is a finite-type $B^+\dR$-module.

(iii) $⇒$ (i): let~$X ⊂ N$ be an analytic embedding in a finite-type
$B^+\dR$-module~$N$. We prove that there exists a finite-length
quotient~$M$ of~$N$ such that the composed map~$X → M$ is surjective with
étale kernel. We may assume that~$N = B_m$ and proceed by induction
on~$m$. The case~$m = 0$ is trivial; assume that the case~$B_m$ is known,
and let~$X ⊂ B_{m+1}$ be an analytic embedding. Let~$X' = X ×_{B_{m+1}}
B_m(1)$ and~$X'' = X/X'$; there exists a natural injective map~$ι: X'' →
\Cp$. By the fundamental lemma, $X''$~is either~$\Cp$ or étale. If $X''$~is
étale, then there exists an analytic section of~$X → X''$, so that we may
write~$X = X' × X''$ and the result on~$X$ is immediate.

Assume that~$X'' = \Cp$. By the induction hypothesis, there exists a
finite-length quotient~$B_r(1)$ of~$N$ such that~$X' → B_{m-1}(1) →
B_r(1)$ is surjective with étale kernel. The same is then true of
composed map~$X → B_m → B_r$, which proves the result.
\end{proof}

We say that a Banach-Colmez space is \emph{constructible} if it satisfies
the equivalent conditions of Proposition~\ref{prop:constructible}; and
that it is \emph{oblique} if it is constructible and~$X → X\dR$ is
injective.

\begin{prop}\label{prop:Hom-dR}
Let~$X$, $X'$ be two constructible Banach-Colmez spaces. Then $\Hom_{\BC}
(X, X')$ identifies with the set of all $B^+\dR$-linear maps~$f: X^+\dR →
(X')^+\dR$ such that~$f(X) ⊂ X'$.
\end{prop}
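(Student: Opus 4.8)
The plan is to build the claimed identification as an explicit map together with an inverse, the forward direction coming from functoriality of the envelope and the converse from the $t$-adic dévissage of finite-type $B^+\dR$-modules. First I would record the forward map. The envelope $X ↦ X^+\dR$ is covariant functorial, being obtained by applying the contravariant functor $\dualdR{(-)} = \Hom_{\mathrm{ind}-\BC}(-, \Bt)$ and then $\Hom_{B^+\dR}(-, \Bt)$; hence a morphism $g\colon X → X'$ of $\BC$ induces a $B^+\dR$-linear map $g^+\dR\colon X^+\dR → (X')^+\dR$. Naturality of the envelope map $e_X\colon X → X^+\dR$ yields $g^+\dR ∘ e_X = e_{X'} ∘ g$; since $X$ and $X'$ are constructible, $e_X$ and $e_{X'}$ are injective (Proposition~\ref{prop:constructible}~(ii)), so after the usual identification of $X$, $X'$ with their images we get $g^+\dR(X) \subseteq X'$. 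This gives the assignment $g ↦ g^+\dR$ into the set of the statement, and it is injective: the relation above together with injectivity of $e_{X'}$ shows that $g^+\dR$ determines $g|_X$, hence $g$ (the functor $H^0$ being faithful). I would also note, from the exact sequence $0 → V ⊗ B^+\dR → X^+\dR → M → 0$ of Proposition~\ref{prop:constructible}, that the image of $X$ generates $X^+\dR$ as a $B^+\dR$-module: a lift of $M$ together with $V ⊗ 1$ already generates. Consequently two $B^+\dR$-linear maps out of $X^+\dR$ that agree on $X$ coincide, which will let me recover $f$ from its restriction.

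It then remains to prove surjectivity, which is the heart of the matter. Let $f\colon X^+\dR → (X')^+\dR$ be $B^+\dR$-linear with $f(X) \subseteq X'$; I must show that $f|_X\colon X → X'$ is a morphism of $\BC$, for then the generation remark gives $(f|_X)^+\dR = f$. The key point is that $X^+\dR$ and $(X')^+\dR$ are finite-type $B^+\dR$-modules (Proposition~\ref{prop:constructible}), so for each integer $m$ the truncations $X^+\dR / t^m X^+\dR$ and $(X')^+\dR / t^m (X')^+\dR$ are finite-length $B^+\dR$-modules, hence Banach-Colmez spaces by Lemma~\ref{lem:M-BC}; moreover $f$ induces a $B^+\dR$-linear map $\bar f_m$ between them, which is analytic by Lemma~\ref{lem:B+dR-f-an}.

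The next step is to choose $m$ uniformly large so that the composite $X ↪ X^+\dR → X^+\dR / t^m X^+\dR$ is still an analytic embedding, and likewise for $X'$. This is possible because $\bigcap_m t^m X^+\dR = 0$ by Krull's theorem for the finite-type module $X^+\dR$ over the complete discrete valuation ring $B^+\dR$, so the image of the genuine Banach-Colmez space $X$ meets $t^m X^+\dR$ trivially once $m$ is large, and the analytic embedding of $X$ into a finite-type $B^+\dR$-module furnished by Proposition~\ref{prop:constructible}~(iii) descends to the truncation. For such $m$, the hypothesis $f(X) \subseteq X'$ gives $\bar f_m(X) \subseteq X'$ inside $(X')^+\dR / t^m (X')^+\dR$. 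Thus $f|_X$ is the restriction of the analytic map $\bar f_m$ to the analytic subspace $X$, with image in the analytic subspace $X'$; since $\BC$ is abelian (Theorem~\ref{thm:ab}), this restriction $X → X'$ is itself a morphism of $\BC$, completing the identification.

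The main obstacle is precisely this uniform choice of $m$: one must check that the envelope map composed with the $t$-adic truncation stays an \emph{analytic} embedding of Banach-Colmez spaces, not merely an injective linear map. This is where constructibility is used essentially, through the analytic embedding into a finite-type module and the vanishing of $\bigcap_m t^m X^+\dR$; the finite-length cases underlying the analyticity of each $\bar f_m$ are themselves the content of Lemmas~\ref{lem:B+dR-f-an} and~\ref{lem:M-plein}.
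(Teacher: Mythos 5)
Your forward direction (covariant functoriality of the envelope, injectivity of $g \mapsto g^+\dR$, and the remark that the image of $X$ generates $X^+\dR$) is correct, and your converse strategy --- truncate modulo $t^m$, invoke Lemma~\ref{lem:B+dR-f-an} for the analyticity of $\bar f_m$, then restrict to the subspaces $X$, $X'$ --- is genuinely different from the paper's argument. But the proof has a gap precisely at the step you yourself flag as the main obstacle, and the gap is not closed. First, the injectivity of $X \to X^+\dR / t^m X^+\dR$ for large $m$ does \emph{not} follow from Krull's intersection theorem: the fact that $\bigcap_m t^m N = 0$ does not prevent a fixed $\Qp$-subspace from meeting every $t^m N$ nontrivially (inside $N = B^+\dR$, take the $\Qp$-span of $\{t^k : k \geq 0\}$). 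The statement is nevertheless true, but the correct argument uses the exact sequence $0 \to V \otimes_{\Qp} B^+\dR \to X^+\dR \to M \to 0$ from the proof of Proposition~\ref{prop:constructible}: once $t^{m_0} M = 0$ one gets $t^m X^+\dR \subset t^{m-m_0}(V \otimes_{\Qp} B^+\dR)$, while $X \cap (V \otimes_{\Qp} B^+\dR) = V$ and $V \cap t\,(V \otimes_{\Qp} B^+\dR) = 0$; this part is fixable.

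Second, and more seriously, you never establish that the composite $X \to X^+\dR \to X^+\dR / t^m X^+\dR$ is an \emph{analytic} morphism of Banach-Colmez spaces. Saying that the embedding furnished by Proposition~\ref{prop:constructible}~(iii) ``descends to the truncation'' is not an argument: (iii) provides an analytic embedding of $X$ into \emph{some} finite-type module $N$, not into $X^+\dR$ itself, and transporting analyticity from $X \to N/t^m N$ to $X \to X^+\dR/t^m X^+\dR$ would require the comparison map $X^+\dR/t^m X^+\dR \to N/t^m N$ to be a monomorphism, which fails in general (even when $X^+\dR \to N$ is injective, reduction modulo $t^m$ need not be). Closing this hole essentially forces you to realize $X$ as $E(\lambda)$ via Proposition~\ref{prop:ext1}, identify $X^+\dR$ with the corresponding module-theoretic pushout, and check that the Banach-Colmez pushout of the presentation $0 \to V \to X \to M \to 0$ along $V \to V \otimes_{\Qp} B_m$ agrees with the $B^+\dR$-module one (via Proposition~\ref{prop:ext-dR}) --- that is, the missing step amounts to the machinery the paper uses directly. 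The paper's proof avoids truncations altogether: it reduces to injective $f$ by the graph trick, pulls back the presentation $0 \to V' \to X' \to M' \to 0$, observes that $V = V' \times_{X'} X \to X$ is analytic because $V$ is étale and that $X \to X' \to M'$ is analytic because it factors through $X^+\dR$, and then recovers the analytic structure of $f$ from the commutative square supplied by Proposition~\ref{prop:ext1}. As written, your proof assumes the key analyticity statement rather than proving it.
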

\begin{proof}
By replacing~$f$ by its graph map~$X → X × X'$, we may assume that
$f$~is injective. By Proposition~\ref{prop:constructible}, $X'$~inserts
in an exact sequence~$0 → V' → X' → M' → 0$, where $M'$~is a
finite-length $B^+\dR$-module and $V'$~is étale. Let~$V = V' ×_{X'} X$;
then $V$~is étale and therefore the map~$V → X$~is analytic.
By hypothesis, the composed map~$X → X' → M'$ factorizes through~$X^+\dR$
and is therefore analytic, therefore the cokernel~$X → M$ of~$V → X$ is a
sub-module of~$M'$. Finally, we get a morphism of exact sequences
between~$0 → V → X → M → 0$ and~$0 → V' → X' → M' → 0$; by
Proposition~\ref{prop:ext1}, this comes from a commutative square
\begin{equation}
\xymatrix{M \ar[r] \ar[d] & V ⊗_{\Qp} B^+\dR \ar[d] \\
M' \ar[r] & V' ⊗_{\Qp} B^+\dR}
\end{equation}
which defines an analytic structure on the morphism~$f: X → X'$.
\end{proof}

\begin{prop}\label{prop:dR-ex-fid}
The functor that maps a constructible Banach-Colmez space~$X$ to the
$B^+\dR$-module $X^+\dR$ is exact and faithful.
\end{prop}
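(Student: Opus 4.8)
The functor in question, call it $F\colon X\mapsto X^+\dR$, is by its very definition the composite of two contravariant functors: the $\Bt$-dual $S\colon X\mapsto \dualdR{X}=\Hom_{\mathrm{ind}-\BC}(X,\Bt)$ followed by Matlis duality $D=\Hom_{B^+\dR}(-,\Bt)$, so that $F=D\circ S$ is covariant and $F(X)=X^+\dR$. The plan is to treat faithfulness directly and to deduce exactness from the exactness of each factor $S$ and $D$.

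Faithfulness is immediate. By Proposition~\ref{prop:constructible}, the envelope $X\to X^+\dR$ is injective for every constructible~$X$, and it is natural in~$X$. Hence, if a morphism $f\colon X\to X'$ has $F(f)=0$, the composite $X\to X^+\dR\to(X')^+\dR$ vanishes; but this composite is also $X\xrightarrow{f}X'\hookrightarrow(X')^+\dR$, and the injectivity of $X'\to(X')^+\dR$ forces $f=0$. This is exactly the injectivity on Hom-sets already implicit in Proposition~\ref{prop:Hom-dR}.

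For exactness, the factor $D$ poses no difficulty: $\Bt=B\dR/B^+\dR$ is the injective hull of the residue field~$\Cp$ of the complete discrete valuation ring~$B^+\dR$, i.e. its Matlis dualising module, so $D=\Hom_{B^+\dR}(-,\Bt)$ is exact on all $B^+\dR$-modules. It therefore suffices to show that $S$ is exact on constructible spaces, and since contravariant $\Hom(-,\Bt)$ is always left exact, this reduces to the vanishing $\Ext^1_{\mathrm{ind}-\BC}(X,\Bt)=0$ for every constructible~$X$: given such a vanishing, a short exact sequence $0\to X'\to X\to X''\to 0$ is carried by $S$ to a short exact sequence of $B^+\dR$-modules, and applying the exact functor $D$ returns the desired $0\to(X')^+\dR\to X^+\dR\to(X'')^+\dR\to 0$.

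Everything thus rests on the vanishing $\Ext^1_{\mathrm{ind}-\BC}(X,\Bt)=0$, and this is the step I expect to be the crux. Writing $X$ as an extension $0\to V\to X\to M\to 0$ of a finite-length $B^+\dR$-module~$M$ by an étale space~$V$ (Proposition~\ref{prop:constructible}) and using the long exact sequence, one reduces by dévissage to the two generating objects $X=\Cp$ and $X=\Qp$, the finite-length case descending from~$\Cp$. Since $\Bt=\limi_m B_m(-m)$ and $\Ext^1$ commutes with this filtered colimit, it is enough to compute $\Ext^1_{\BC}(-,B_m(-m))$ at each finite level. For $X=\Cp$, Proposition~\ref{prop:ext-dR} together with the full faithfulness of Proposition~\ref{prop:BdR-BC} identifies $\Ext^1_{\BC}(\Cp,B_m(-m))$ with $\Ext^1_{B^+\dR}(\Cp,B_m(-m))$, whose colimit is $\Ext^1_{B^+\dR}(\Cp,\Bt)=0$ by injectivity of~$\Bt$; this case is formal. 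The genuine obstacle is $X=\Qp$, where no $B^+\dR$-linear description is available: here I would argue that each $B_m(-m)$ is connected while $\Qp$ is étale, so that in any extension $0\to B_m(-m)\to Y\to\Qp\to 0$ the composite $Y^0\to Y\to\Qp$ from the connected part to an étale space vanishes by the proposition of~§1.1; this gives $Y^0=B_m(-m)$, and the same proposition splits the connected–étale sequence, whence the extension splits and $\Ext^1_{\BC}(\Qp,B_m(-m))=0$. Passing to the colimit yields $\Ext^1_{\mathrm{ind}-\BC}(\Qp,\Bt)=0$, which completes the argument.
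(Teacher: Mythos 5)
Your proof is correct and is essentially the paper's own argument with the details spelled out: faithfulness from the embedding $X \subset X^+\dR$, exactness reduced to the vanishing $\Ext^1_{\BC}(X,\Bt) = \limi \Ext^1_{\BC}(X, B_m(-m)) = 0$ (your factorization through $\dualdR{X}$ followed by $\Hom_{B^+\dR}(-,\Bt)$, exact by injectivity of $\Bt$, is precisely why this vanishing suffices), then dévissage along $0 \to V \to X \to M \to 0$, with the finite-length case handled by Proposition~\ref{prop:ext-dR} plus the injectivity of~$\Bt$, and the étale case by the splitting of the connected--étale sequence (which the paper compresses into: every surjection onto an étale space has a section). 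One citation nitpick: the vanishing of $Y^0 \to \Qp$ does not follow from the quoted sentence of the proposition in \S 1.1 (which concerns maps from étale spaces \emph{to} connected ones), but rather from the universal property of the largest étale quotient --- the map $Y \to \Qp$ factors through $\pi_0(Y)$, and $Y^0 \to \pi_0(Y)$ is zero by exactness of the connected--étale sequence.
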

\begin{proof}
The faithfulness is a consequence of the fact that, for any
constructible~$X$, $X ⊂ X^+\dR$. To prove exactness, it is enough to
prove that 
\begin{equation}
\Ext^1_{\BC} (X, \Bt) = \limi \Ext^1_{\BC} (X, B_m(-m)) = 0.
\end{equation}
Since $X$~is constructible, it inserts in a presentation~$0 → V → X → M →
0$, where $V$~is étale and $M$~is a finite-length $B^+\dR$-module. Using
the $\Ext$~exact sequence in the category of ind-Banach-Colmez spaces, we
see that it is enough to prove the result when $X$~is étale or a
$B^+\dR$-module.

If $X$~is étale, then any surjective map to~$X$ has a section, and
therefore~$\Ext^1_{\BC} (X, Y) = 0$ for all~$Y$. If $X$~is a
finite-length $B^+\dR$-module, the result follows from
Proposition~\ref{prop:ext-dR} and the fact that $\Bt$~is an injective
$B^+\dR$-module.
\end{proof}

\begin{prop}\label{prop:fil0}
Let $X$~be a constructible Banach-Colmez space. There exists a unique
filtration
\[ X = F^0 X ⊃ F^{+} X ⊃ F^{∞} X, \]
such that $F^0 X / F^+ X$~is étale, $F^+ X / F^{∞} X$ is connected and
oblique, and $F^{∞} X$~is a finite-length $B^+\dR$-module.
\end{prop}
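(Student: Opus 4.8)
The plan is to let the uniqueness statement dictate the only possible value of each step, and then to check that this value works. For the uniqueness, suppose $X = F^0X \supset F^+X \supset F^\infty X$ is any filtration with the three stated properties. Note first that every finite-length $B^+\dR$-module is connected (it is an iterated extension of copies of $\Cp$, and an extension of connected spaces is connected, since a morphism from a connected space to an étale one factors through a space that is at once connected and étale, hence zero). Thus $F^\infty X$ is connected; as $F^+X/F^\infty X$ is connected too, $F^+X$ is connected and so $F^+X \subset X^0$. Conversely $X/F^+X$ is étale, and the morphism $X^0 \to X/F^+X$ (connected to étale) vanishes, so $X^0 \subset F^+X$. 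Hence $F^+X = X^0$ is forced. Next, since $F^\infty X$ is a finite-length $B^+\dR$-module we have $(F^\infty X)\dR = 0$, whence $F^\infty X \subset \Ker(X^0 \to X^0\dR)$; and since $X^0/F^\infty X$ is oblique it embeds into $(X^0/F^\infty X)\dR$. As $(-)\dR$ is exact and kills $F^\infty X$, the natural map $X^0\dR \to (X^0/F^\infty X)\dR$ is an isomorphism, so by functoriality of the envelope $\Ker(X^0\to X^0\dR) = \Ker(X^0\to X^0/F^\infty X) = F^\infty X$. Therefore the only candidate is
\[ F^+X = X^0, \qquad F^\infty X = \Ker(X^0 \to X^0\dR). \]

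It remains to verify that these definitions satisfy the three conditions. The quotient $X/F^+X = \pi_0(X)$ is étale by the connected--étale sequence, and $X^0$ is constructible as a sub-object of the constructible space $X$ (condition (iii) of Proposition~\ref{prop:constructible}). Grant for a moment the claim that $W := F^\infty X = \Ker(X^0\to X^0\dR)$ is a finite-length $B^+\dR$-module. Then $Q := X^0/W$ is connected (a quotient of a connected space) and constructible, and applying the exact functor $(-)\dR$ to $0\to W\to X^0\to Q\to 0$ with $W\dR = 0$ yields $X^0\dR \xrightarrow{\sim} Q\dR$; by naturality the induced map $Q\to Q\dR$ has kernel $W/W = 0$, so $Q$ is oblique. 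Thus everything reduces to the following claim, which I expect to be the main obstacle.

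\emph{Claim.} $W = \Ker(X^0\to X^0\dR)$ is a finite-length $B^+\dR$-module. Write $T$ for the torsion submodule of the finitely generated $B^+\dR$-module $(X^0)^+\dR$ and $P = (X^0)^+\dR/T$ for its free quotient. Since $X^0\hookrightarrow (X^0)^+\dR$ and the kernel of $(X^0)^+\dR\to X^0\dR$ is exactly $T$, we have $W = X^0\cap T$; the claim is that this intersection is a $B^+\dR$-submodule, equivalently that $T\subset X^0$. The difficulty is genuine: $X^0$ is \emph{not} a $B^+\dR$-submodule of its envelope, multiplication by $t$ does not preserve it, and the naive ``torsion maps to a torsion-free quotient, hence to zero'' argument fails because $(X^0)^+\dR/X^0$ is only a quotient of abelian groups, not of $B^+\dR$-modules. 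The strategy I would follow is to realize the free-quotient projection $(X^0)^+\dR \twoheadrightarrow P$ as $(-)^+\dR$ applied to an analytic surjection $X^0 \twoheadrightarrow Q$ onto a constructible space with torsion-free envelope $Q^+\dR = P$. Granting such a morphism, exactness of $(-)^+\dR$ (Proposition~\ref{prop:dR-ex-fid}) gives $W^+\dR = \Ker\!\big((X^0)^+\dR\to P\big) = T$; then $W\hookrightarrow W^+\dR = T$, so the cokernel $T/W$ has vanishing envelope and, $(-)^+\dR$ being faithful, $T/W = 0$. Hence $W = T$ is a finite-length $B^+\dR$-module.

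To construct the surjection $X^0\twoheadrightarrow Q$ I would argue by induction on the length $\ell(T)$, peeling off one simple module $\Cp$ from the bottom at a time. When $\ell(T)=0$ the space $X^0$ is already oblique and $Q = X^0$. In the inductive step the socle $T[t]\neq 0$ singles out a one-dimensional connected piece, and the crux is to split off a copy of $\Cp$ that is an honest $B^+\dR$-submodule of $X^0$, after which $X^0/\Cp$ is again connected and constructible with torsion of strictly smaller length and one concludes by induction. This extraction of a bottom $\Cp$ is the exact mirror image, read on sub-objects rather than quotients, of the extraction of a top $\Cp$ in the proof of (iii)$\Rightarrow$(i) of Proposition~\ref{prop:constructible}, and the sole genuinely analytic input is the control of one-dimensional connected spaces by the fundamental lemma, exactly as in Lemma~\ref{lem:lf-ext1}; Proposition~\ref{prop:Hom-dR} then upgrades the resulting $B^+\dR$-linear inclusion of the socle to an analytic morphism. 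Carrying out this one-dimensional step is where the real work lies.
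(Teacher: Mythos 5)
Your uniqueness half is correct and complete --- indeed more explicit than the paper, whose proof does not address uniqueness at all --- and you have correctly identified the forced values $F^{+}X = X^{0}$ and $F^{\infty}X = \Ker(X^{0} \to X^{0}\dR) = X^{0} \cap T$, which agree with the paper's definitions ($F^{+}X$ is the connected part, $F^{\infty}X$ the $B^{+}\dR$-torsion $T$ of $(F^{+}X)^{+}\dR$). But the existence half has a genuine gap, and you flag it yourself: the entire content of the proposition sits in your \emph{Claim} that $T \subset X^{0}$, and in place of a proof you offer a program --- an induction on the length of $T$, peeling off bottom copies of $\Cp$ --- whose decisive one-dimensional step you explicitly defer (``where the real work lies''). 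As written this is a statement of intent, not an argument; moreover, splitting off a sub-$\Cp$ of $X^{0}$ that is simultaneously a $B^{+}\dR$-submodule of the envelope is essentially the Claim itself in length one, so the induction as sketched does not obviously reduce the difficulty.

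The missing step is much cheaper than your program suggests, and it is exactly what the paper's two-line proof does. Since $X^{0}$ is constructible, Propositions~\ref{prop:constructible} and~\ref{prop:ext1} present it as a fibre product: there is a $B^{+}\dR$-linear map $\lambda : M \to V \otimes_{\Qp} \Bt$ with $X^{0} = (V \otimes_{\Qp} \Be) \times_{V \otimes \Bt} M$, and the proof of (i)$\Rightarrow$(ii) in Proposition~\ref{prop:constructible} gives the exact sequence $0 \to V^{+}\dR \to (X^{0})^{+}\dR \to M \to 0$, i.e.\ identifies the envelope with the corresponding fibre product $(V \otimes_{\Qp} B\dR) \times_{V \otimes \Bt} M$. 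If an element $(b,m)$ of the envelope is killed by $t^{n}$, then $b = 0$ because $V \otimes B\dR$ is torsion-free, and the fibre-product condition then forces $\lambda(m) = 0$; thus $T = \{(0,m) : \lambda(m) = 0\} = \Ker \lambda$, and every such element lies in $X^{0}$ because $0 \in V \otimes_{\Qp} \Be$. Hence $T \subset X^{0}$, so $W = T$ is a finite-length module, and your own reduction (exactness of the envelope functor, obliqueness of $Q = X^{0}/T$) finishes the argument exactly as you wrote it. This is what the paper compresses into the statement that $F^{\infty}X/(X \cap F^{\infty}X)$, which embeds into $X^{+}\dR/X \cong V^{+}\dR/V$, is ``both torsion and free'', hence zero. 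No induction, no extraction of copies of $\Cp$, and no further appeal to the fundamental lemma is needed for this step.
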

\begin{proof}
We define $F^+ X$~as the connected part of~$X$, and~$F^{∞} X$ as the
$B^+\dR$-torsion part of~$(F^+X)^+\dR$.
Let~$X'$ be the image of~$X$ in the $B^+\dR$-module~$X^+\dR$.
Then $F^{∞} X/(X'∩ F^∞ X)$ is both torsion and free,
and therefore~$F^∞ X ⊂ X$.
\end{proof}

The object~$F^∞ X$ is essentially an extra step,
corresponding to slope~$∞$,
of the filtration of Theorem~\ref{thm:fil}.

\subsection{Harder-Narasimhan categories}

The following is a formalization of the Harder-Narasimhan
filtration~\cite{HN1974} in more general categories.

A \emph{Harder-Narasimhan category} is an exact category~$\ro C$~(in the
sense of~\cite[1.0.2]{Laumon1983}, together with an exact functor~$η$ (called
\emph{fibre functor}) to an abelian category~$\ro C_{η}$, and
additive functions~$d: \ro C → ℕ$ and~$r: \ro C_{η} → ℕ$, such that
\begin{enumerate}
\item[(HN1)] For any object~$X$ of~$\ro C$, $r(η(E)) = 0$ if, and only if,
$E = 0$;
\item[(HN2)] If $f: E → E'$ is a morphism of~$\ro C$ such that~$η(f)$ is an
isomorphism, then $d(E') ≥ d(E)$, with equality if, and only if, $f$~is
an isomorphism;
\item[(HN3)] For any object~$E$ of~$\ro C$, $η$ maps injectively the
subobjects of~$E$ to those of~$η(E)$;
\item[(HN4)] For any morphism~$f: F → G$ of~$\ro C$ that is the
compositum of a strict monomorphism~$F → E$ followed by a strict
epimorphism~$E → G$, there exists a decomposition~$f = v ∘ u ∘ g$ such
that $u$~is strict epi, $η(g)$~is an isomorphism, and $v$~is strict mono.
\end{enumerate}

\begin{prop}\label{prop:fibre-HN}
Assume that the category~$\ro C$ satisfies axioms~(HN1) to~(HN3), and
moreover that it has finite fibre products of strict monomorphisms,
compatible with the fibre product in the abelian category~$\ro C_{η}$.
Then $\ro C$ satisfies axiom~(HN4).
\end{prop}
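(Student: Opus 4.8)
The plan is to read (HN4) as an image factorization of $f$ up to isogeny and to construct it by hand. Write $f = p \circ i$ with $i \colon F \hookrightarrow E$ a strict monomorphism and $p \colon E \twoheadrightarrow G$ a strict epimorphism. First I would set $K = \Ker p$, a strict subobject of $E$, and form $I = F \times_E K$ using the hypothesis that $\ro C$ has fibre products of strict monomorphisms; it is a strict subobject of $F$. The compatibility of this fibre product with the one in $\ro C_{\eta}$, together with exactness of $\eta$ (which gives $\eta(K) = \Ker \eta(p)$), yields $\eta(I) = \eta(F) \times_{\eta(E)} \eta(K) = \Ker \eta(f)$. Since $I \subset K$, the strict epimorphism $q \colon F \twoheadrightarrow F/I$ kills $I$, so $f$ factors as $f = \bar f \circ q$ with $\bar f \colon F/I \to G$, and by construction $\eta(\bar f)$ is the monomorphism of $\eta(F)/\Ker\eta(f)$ onto the image of $\eta(f)$ in $\eta(G)$. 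This $q$ will provide the strict epimorphism $u$ of (HN4).

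It then remains to realize the image of $\eta(f)$ as a genuine strict subobject $v \colon B \hookrightarrow G$ and to produce the correcting isogeny $g$. For the image I would use the exact structure: the pushout $S = F \sqcup_I K$ exists (pushout along the strict monomorphism $I \hookrightarrow K$), comes with a canonical map $\phi \colon S \to E$, and fits in a short exact sequence $0 \to K \to S \to F/I \to 0$. Comparing this with $0 \to K \to E \to G \to 0$ — the comparison being the identity on $K$ and $\bar f$ on the cokernels — reduces the admissibility of $\phi$, hence the existence of $B = S/K \hookrightarrow G$, to the assertion that $\bar f$ is a strict monomorphism. Dually I would form the pullback $P = E \times_G (F/I)$ of $p$ along $\bar f$, whose image under $\eta$ is the preimage $\eta(F) + \eta(K)$ by compatibility. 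Axiom (HN3), asserting that $\eta$ is injective on strict subobjects, is what lets me recognize these candidates as the correct ones and pin down $B$, while (HN1) and (HN2) serve to detect when the comparison maps are isomorphisms. Granting this, $\bar f$ factors as $\bar f = v \circ w$ with $w \colon F/I \to B$ an isogeny and $v$ a strict monomorphism.

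I expect the main obstacle to be exactly this promotion of the $\eta$-monomorphism $\bar f$ to an honest strict monomorphism — equivalently, the construction of $B \subset G$ as a strict subobject whose image under $\eta$ is the image of $\eta(f)$ — since in a general exact category an $\eta$-monomorphism need not be admissible; it is here that the compatibility of the fibre products with $\ro C_{\eta}$ is indispensable. A secondary difficulty is the position of the isogeny: the construction naturally yields $f = v \circ w \circ q$, of the shape (strict mono)$\circ$(isogeny)$\circ$(strict epi), whereas (HN4) requires the isogeny \emph{first}. I would therefore transpose $w$ past $q$ using the pullback $P$ above, taking $g$ to be the resulting map out of $F$ and $u$ the transported strict epimorphism, so that finally $f = v \circ u \circ g$ with $u$ strict epi, $\eta(g)$ an isomorphism, and $v$ strict mono.
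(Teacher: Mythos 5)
Your first paragraph reproduces the paper's construction faithfully: your $I = F \times_E K$ is the paper's $F' = F \times_E E'$, your $q \colon F \to F/I$ is its strict epimorphism $F \to F''$, and your $\bar f$ is its induced map; up to that point the two arguments coincide. The genuine gap is exactly the one you flag yourself and then bypass with ``granting this'': producing a strict subobject $B$ of $G$ with $\eta(B) = \mathrm{im}\,\eta(f)$ receiving an isogeny from $F/I$. Worse than being unproven, your proposed route to it is circular: you reduce the existence of $B$ to the assertion that $\bar f$ is a strict monomorphism. That assertion is false in general, and it is precisely what the isogeny $g$ in (HN4) exists to repair: in the category to which the paper applies this proposition (oblique Banach--Colmez spaces, Proposition~\ref{prop:oblique-hn}), $\bar f$ is typically a monomorphism that is \emph{not} strict --- its image is a non-saturated subobject, and the required $B$ is the saturation, which is $\eta$-isomorphic to $F/I$ but not isomorphic to it in $\ro C$. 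Consequently no construction whose output is canonically isomorphic to $F/I$ can serve as $B$, and your pushout is of this kind: for $S = F \sqcup_I K$ one has $S/K \cong F/I$ canonically, and the induced map $S/K \to G$ \emph{is} $\bar f$. (If $\bar f$ were strict, (HN4) would hold trivially with $g = \id$, and the detour through $S$ would be pointless.) The appeal to (HN3) ``pinning down'' $B$ and to (HN1)--(HN2) ``detecting isomorphisms'' is a gesture, not an argument.

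The paper obtains its strict monomorphism from a different source: not by promoting $\bar f$, but from the exact structure itself. It forms the amalgamated sum $G'$ --- a pushout along a strict monomorphism, which the axioms of an exact category guarantee to exist and to have a strict monomorphism as its pushout map --- and reads the factorization $F \to F'' \to G' \to G$ off that square, with $\eta(F'') \to \eta(G')$ the coimage--image isomorphism of $\eta(f)$. (The paper's write-up is terse and typo-ridden here --- monomorphisms and epimorphisms are interchanged in its first sentence, and $F'$ should read $F''$ in places --- but the pushout axiom is its essential ingredient, and it is the one your proposal lacks.) Two further defects in your plan: the pullback $P = E \times_G (F/I)$ used for your final ``transposition'' is not licensed by the hypotheses, which grant fibre products of \emph{strict monomorphisms} only --- $p$ is an epimorphism and $\bar f$ is not known (and in general fails) to be strict; and the transposition is in any case unnecessary, since, as both the paper's proof and its use of (HN4) in Lemma~\ref{lem:sscs} show, the intended factorization is (strict epi) followed by ($\eta$-isomorphism) followed by (strict mono), exactly the shape your construction naturally yields --- the displayed order in ``$f = v \circ u \circ g$'' is simply written loosely.
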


\begin{proof}
Let~$f: F → G$ be the composite of a strict epi~$F → E$ and a strict
mono~$E → G$; let~$E' = \Ker (E → G)$. By the hypothesis of the
proposition, the fibre product~$F' = F ×_{E} E'$ exists and~$F' → F$~is a
strict mono; therefore, $F'' = F / F'$ exists and $F'' → F$~is a strict
epi. Since $\ro C$~is exact and $F → E$~is a strict mono, the amalgamated
sum~$G' = F' +_{F} E$ exists, and the pushout~$F' → G'$~is a strict mono.

Finally, the arrow~$F → G$ is the composite~$F → F' → G' → G$, where $F →
F'$~is a strict epi, $G' → G$~is a strict mono, and~$η(F') → η(G')$ is
the isomorphism in~$C_{η}$ between the co\"image and the image of~$η(f)$.
\end{proof}

For the remainder of this part,
we assume that $\ro C$~is a Harder-Narasimhan category.
As most of the proofs from~\cite{HN1974} directly apply in~$\ro C$,
we shall only detail here the parts that differ.

For any object~$E$ of~$\ro C$, we define~$r(E) = r(η(E))$ and~$μ(E) =
d(E) / r(E)$.
We call~$d(E)$, $r(E)$ and~$μ(E)$
the \emph{degree}, \emph{rank} and \emph{slope} of~$E$.
We write~$E'≼ E$ for a strict mono~$E' → E$,
and~$E' ≺ E$ if moreover~$E' → E$~is not an isomorphism.
We say that $E$~is \emph{semi-stable} if~$E ≠ 0$ and~$μ(E') ≤ μ(E)$
for any~$E' ≼ E$;
that $E$~is \emph{stable} if~$E ≠ 0$ and~$μ(E') < μ(E)$ for any~$E' ≺ E$.

%

Let~$F ≼ E$ be a strict subobject of~$E$. We say that $F$~is
\emph{costable} in~$E$ if, for all~$F ≺ F' ≼ E$, $μ(F') > μ(F)$.

\begin{lem}[{\cite[Lemma~1.3.5]{HN1974}}]\label{lem:sscs}
Let~$E$ be an object of~$\ro C$ and~$F_1$, $F_2$ be two strict subobjects
of~$E$ such that $F_1$~is semi-stable and $F_2$~is costable in~$E$. If
$F_1$~is not a strict subobject of~$F_2$, then~$μ(F_1) < μ(F_2)$.
\end{lem}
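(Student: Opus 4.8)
The plan is to compare $F_1$ and $F_2$ through their intersection and their sum inside $E$, and to close by a seesaw estimate together with the costability of $F_2$. First I would form the fibre product $A = F_1 ∩ F_2 = F_1 ×_E F_2$, which exists and is a strict subobject of both $F_1$ and $F_2$ because, under the hypotheses of Proposition~\ref{prop:fibre-HN}, $\ro C$ has fibre products of strict monomorphisms; dually the amalgamated sum produces a strict subobject $S = F_1 + F_2 ≼ E$. These sit in a modular short exact sequence $0 → A → F_1 ⊕ F_2 → S → 0$, so the additivity of $d$ and $r$ gives $d(A)+d(S)=d(F_1)+d(F_2)$ and $r(A)+r(S)=r(F_1)+r(F_2)$. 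Equivalently I would work with the single nonzero subquotient $Q = F_1/A ≅ S/F_2$; it is nonzero precisely because $F_1$ is not a strict subobject of $F_2$, which by axiom~(HN3) (injectivity of $η$ on subobjects) means that $A ≺ F_1$ is a \emph{proper} strict subobject.

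The only numerical input is the elementary \emph{seesaw} property: for a short exact sequence $0 → B' → B → B'' → 0$ with $B', B'' ≠ 0$, the slope $μ(B)$ is a mediant of $μ(B')$ and $μ(B'')$ and hence lies between them, so $μ(B') ≤ μ(B)$ forces $μ(B) ≤ μ(B'')$, and symmetrically. Applying this to $0 → A → F_1 → Q → 0$ and using that $F_1$ is semi-stable, the inequality $μ(A) ≤ μ(F_1)$, valid for the subobject $A ≼ F_1$, upgrades to $μ(Q) ≥ μ(F_1)$ for the quotient $Q$.

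I would then argue by contradiction. Suppose $μ(F_1) ≥ μ(F_2)$. Combined with $μ(Q) ≥ μ(F_1)$ this gives $μ(Q) ≥ μ(F_2)$, and feeding this into the seesaw for $0 → F_2 → S → Q → 0$ places $μ(S)$ on the same side of $μ(F_2)$ as $μ(Q)$, i.e. $μ(S) ≥ μ(F_2)$. But $F_1 \not≼ F_2$ makes $F_2 ≺ S ≼ E$ a proper strict enlargement of $F_2$ inside $E$, so the costability of $F_2$ forces $μ(S) < μ(F_2)$ for this enlargement, which is incompatible with $μ(S) ≥ μ(F_2)$. The contradiction yields $μ(F_1) < μ(F_2)$, as claimed.

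I expect the main obstacle to be categorical rather than arithmetic: since $\ro C$ is only an exact category, I must ensure that $A = F_1 ∩ F_2$ and $S = F_1 + F_2$ genuinely exist as strict sub- and quotient objects and that the modular sequence is exact, so that the additivity of $d, r$ and hence the seesaw are legitimate. This is exactly where the fibre-product hypothesis and axiom~(HN3) are used; once they are in place, the slope bookkeeping and the final contradiction are immediate, and the argument is the formal transcription of~\cite[Lemma~1.3.5]{HN1974}.
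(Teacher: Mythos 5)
Your argument is the classical abelian-category proof (intersect, sum, modular law, seesaw), but the lemma is stated for an \emph{arbitrary} Harder-Narasimhan category, and there your construction breaks down at exactly the point you wave through. First, the existence of fibre products of strict monomorphisms is not part of the definition of a Harder-Narasimhan category: it is an \emph{extra} hypothesis in Proposition~\ref{prop:fibre-HN}, used there only as a sufficient criterion for axiom~(HN4). The lemma must be proved from (HN1)--(HN4) alone, so you may not assume that $A = F_1 ×_E F_2$ exists. Second, and more seriously, even when fibre products do exist, the rest of your setup is an abelian-category artefact: in an exact category the sum $S = F_1 + F_2$ need not exist as a \emph{strict} subobject of~$E$, the modular sequence $0 → A → F_1 ⊕ F_2 → S → 0$ need not be exact, and the second isomorphism $F_1/A ≅ S/F_2$ fails in general. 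In the motivating examples (vector bundles in~\cite{HN1974}, oblique Banach-Colmez spaces here, where strict means saturated, $\overline{X} = X$), the image of $F_1$ in $E/F_2$ must be \emph{saturated} before it becomes a strict subobject, and saturation raises the degree~$d$ while preserving the rank~$r$. Since costability constrains only strict subobjects $F_2 ≺ F' ≼ E$, applying it to your $S$ is illegitimate, and your seesaw bookkeeping computes $d$ of the wrong object. It is telling that your proof never invokes (HN2) or (HN4): those axioms exist precisely to control this saturation jump.

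The paper's proof handles this as follows, and it is the correct repair of your argument. The composite $F_1 → E → E/F_2$ is nonzero because $F_1 \not≼ F_2$; by (HN4) it factors as a strict epi $F_1 → F'_1$, followed by a morphism $g: F'_1 → F'_2$ with $η(g)$ an isomorphism, followed by a strict mono $F'_2 ≼ E/F_2$. Here $F'_1$ plays the role of your $Q = F_1/A$ and $F'_2$ that of the strict (saturated) subobject generated by its image; they need not be isomorphic, but (HN2) gives $d(F'_2) ≥ d(F'_1)$, hence $μ(F'_2) ≥ μ(F'_1)$ since $r(F'_2) = r(F'_1)$. Semi-stability of $F_1$ gives $μ(F_1) ≤ μ(F'_1)$ (your seesaw step, applied to the kernel of $F_1 → F'_1$), and costability of $F_2$ gives $μ(F'_2) < μ(F_2)$; chaining these yields $μ(F_1) < μ(F_2)$ directly, with no contradiction argument. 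Your proof becomes correct if you replace ``$S/F_2$'' by the strict subobject $F'_2$ furnished by (HN4) and insert the inequality $d(F'_2) ≥ d(F_1/A)$ from (HN2); as written, it proves the lemma only when $\ro C$ is abelian, where every mono is strict and the saturation issue disappears.
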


\begin{proof}
By hypothesis, the composite morphism~$F_1 → E → E/F_2$ is not zero. By
the axiom~(HN4), it factorizes as~$v ∘ g ∘ u$, where $v: F_1 → F'_1$~is
strict epi, $u: F'_2 → E/F_2$~is strict mono, and $η(g): η(F'_1) →
η(F'_2)$~is an isomorphism. By~(HN2), this implies that~$d(F'_2) ≥
d(F'_1)$; since~$r(F'_2) = r(F'_1)$, we have~$μ(F'_2) ≥ μ(F'_1)$.
Since~$F_1$~is semi-stable, $μ(F_1) ≤ μ(F'_1)$; moreover, since
$F_2$~is costable in~$E$ and~$F'_2 ≼ E/F_2$, $μ(F'_2) < μ(F_2)$. We
therefore have $μ(F_1) ≤ μ(F'_1) ≤ μ(F'_2) < μ(F_2)$.
\end{proof}

\begin{prop}[{\cite[Prop.~1.3.4]{HN1974}}]\label{prop:existe-sscs}
Let~$E$ be an object of~$\ro C$. Then $E$~contains a unique semi-stable
and costable strict subobject~$F$.
\end{prop}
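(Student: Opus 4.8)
The plan is to realise $F$ as the \emph{maximal destabilizing subobject} of $E$: the strict subobject of largest slope, and of largest possible rank among those. Throughout I take $E ≠ 0$, the statement being empty otherwise. Two elementary consequences of the axioms drive the argument. First, for any nonzero strict subobject $G$ the cokernel $E/G$ exists (since $G → E$ is a strict monomorphism), and additivity of $d$ on $0 → G → E → E/G → 0$ gives $0 ≤ d(G) ≤ d(E)$, while (HN3) together with additivity of $r$ gives $1 ≤ r(G) ≤ r(E)$, the lower bound being (HN1). Hence the slopes $\mu(G) = d(G)/r(G)$ lie in a finite set, so the maximal slope $\mu_{\max}$ is attained. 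Second, a proper strict monomorphism $G → G'$ strictly increases the rank: its cokernel is nonzero, so $r(G'/G) ≠ 0$ by (HN1), and $r$ is additive, whence $r(G') > r(G)$.

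For existence I would choose, among the nonzero strict subobjects of slope $\mu_{\max}$ (a nonempty family whose ranks are bounded integers), one of maximal rank, say $F$. Then $F$ is semi-stable, since any strict subobject of $F$ is also a strict subobject of $E$ and therefore has slope at most $\mu_{\max} = \mu(F)$. And $F$ is costable: if $F$ is properly contained in a strict subobject $F'$ of $E$, then $r(F') > r(F)$ by the rank rigidity above, while $\mu(F') ≤ \mu_{\max}$; were $\mu(F') = \mu_{\max}$, then $F'$ would be a slope-$\mu_{\max}$ subobject of rank exceeding $r(F)$, against the choice of $F$, so $\mu(F') < \mu(F)$. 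Thus $F$ is semi-stable and costable.

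For uniqueness, let $F$ and $F'$ both be semi-stable and costable. Applying Lemma~\ref{lem:sscs} with $(F_1,F_2) = (F,F')$ and then with $(F_1,F_2) = (F',F)$ shows that $F$ and $F'$ cannot each fail to be a strict subobject of the other, since that would force $\mu(F) < \mu(F') < \mu(F)$. Hence one is a strict subobject of the other, say $F$ of $F'$; if this inclusion were proper, costability of $F$ would give $\mu(F') < \mu(F)$ whereas semi-stability of $F'$ would give $\mu(F) ≤ \mu(F')$, a contradiction. Therefore $F = F'$.

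The only genuinely structural input is the pair of facts in the first paragraph: finiteness of the slope set, so that a maximal-slope, maximal-rank subobject can actually be \emph{chosen}, and the rank rigidity of proper strict monomorphisms. I expect extracting these cleanly from the exact-category axioms (existence of cokernels of strict monos, exactness of $\eta$, and (HN1)) to be the step deserving care; once they are in hand, existence is a maximal-rank selection and uniqueness is immediate from Lemma~\ref{lem:sscs}. In particular this route sidesteps the classical argument that closes the family of maximal-slope subobjects under sums.
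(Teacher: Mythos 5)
Your proof is correct, and it supplies something the paper deliberately omits: the paper gives no argument for Proposition~\ref{prop:existe-sscs} at all, quoting it from~\cite{HN1974} under the blanket remark that most of the proofs there ``directly apply''. Your existence step (bound the slopes of nonzero strict subobjects, pick one of maximal slope, then of maximal rank among those) is the classical selection, and your extraction of the two structural inputs --- finiteness of the slope set, and the fact that a proper strict mono strictly increases the rank --- from additivity, positivity of $d$ and~$r$, exactness of~$\eta$, and~(HN1) is sound. Where you genuinely diverge from~\cite{HN1974} is uniqueness: the classical argument for bundles closes the family of maximal-slope subsheaves under sums $F_1+F_2$, an operation that is simply not available from axioms~(HN1)--(HN4); deducing uniqueness instead from Lemma~\ref{lem:sscs} applied in both directions is exactly the adaptation the abstract setting requires, and it is presumably why the paper states Lemma~\ref{lem:sscs} (Lemma~1.3.5 of~\cite{HN1974}) \emph{before} this proposition (Prop.~1.3.4 of~\cite{HN1974}), reversing the original order. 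A cosmetic quibble: the bound $r(G) \leq r(E)$ follows from exactness of~$\eta$ plus additivity of~$r$; axiom~(HN3), which concerns injectivity on subobject lattices, is not what is needed there.

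One discrepancy you should make explicit rather than pass over silently: you prove and use $\mu(F') < \mu(F)$ for $F \prec F' \preceq E$ as the meaning of costability, whereas the paper's printed definition reads $\mu(F') > \mu(F)$. Your convention is the right one; the printed inequality must be a typo, because the paper's own proof of Lemma~\ref{lem:sscs} invokes costability of~$F_2$ to conclude $\mu(F'_2) < \mu(F_2)$ for $F'_2 \preceq E/F_2$, which by additivity of $d$ and~$r$ is equivalent to $\mu(F') < \mu(F_2)$ for all $F_2 \prec F' \preceq E$; moreover, with the printed sign your maximal-slope, maximal-rank subobject would not be costable, and the proposition would lose its intended meaning as existence and uniqueness of the maximal destabilizing subobject (compare the strictly decreasing slopes in Proposition~\ref{prop:fil-unique}). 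Since your uniqueness argument rests on Lemma~\ref{lem:sscs}, your reading is the only internally consistent one --- but state it, so the proof cannot be faulted against the letter of the definition.
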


\begin{prop}[{\cite[Lemma~1.3.8]{HN1974}}]\label{prop:fil-unique}
Let~$E$ be an object of~$\ro C$ and~$0 = F_0 ≺ F_1 ≺ .. ≺ F_n = E$ be an
increasing filtration by strict subobjects of~$E$, such that each
quotient~$F_i/F_{i-1}$ is semi-stable. Then the following conditions are
equivalent:
\begin{enumerate}
\item For each~$i$, $F_{i+1}/F_i$~is costable in~$E/F_i$;
\item The sequence~$(μ(F_i))$ is strictly decreasing.
\end{enumerate}
\end{prop}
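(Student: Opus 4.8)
The plan is to transport to~$\ro C$ the classical argument of~\cite{HN1974}, relying only on the results already established: Lemma~\ref{lem:sscs} and the existence of a canonical semi-stable costable subobject (Proposition~\ref{prop:existe-sscs}). Write $μ_i = μ(F_i/F_{i-1})$ for the slope of the $i$-th graded piece, so that condition~(ii) reads $μ_1 > μ_2 > \dots > μ_n$. Since~$η$ is exact and~$d$ additive, both~$d$ and~$r$ are additive on each short exact sequence $0 → F_{i-1} → F_i → F_i/F_{i-1} → 0$; hence the slope of an extension is the \emph{mediant} of the slopes of its sub- and quotient-object, lying strictly between them as soon as these differ. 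I use this mediant principle throughout.

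For~$(i) ⇒ (ii)$ I would compare two consecutive steps inside~$E/F_{i-1}$. By~(i) the subobject~$F_i/F_{i-1}$ is costable in~$E/F_{i-1}$, whereas $F_{i+1}/F_{i-1}$ is a strictly larger strict subobject; costability therefore gives $μ(F_{i+1}/F_{i-1}) < μ_i$. As~$F_{i+1}/F_{i-1}$ is an extension of~$F_{i+1}/F_i$ by~$F_i/F_{i-1}$, its slope is the mediant of~$μ_i$ and~$μ_{i+1}$, and a mediant can be smaller than~$μ_i$ only when $μ_{i+1} < μ_i$. This yields the strict inequalities of~(ii).

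The converse rests on a maximal-slope estimate that I would establish first, by induction on~$n$: \emph{if the~$μ_i$ are decreasing, every strict subobject $W ≼ E$ satisfies $μ(W) ≤ μ_1$}. The case $n=1$ is the semi-stability of~$E$. For the inductive step I apply~(HN4) to the composite $W → E → E/F_1$, factoring it as a strict epimorphism $W → A$, an~$η$-isomorphism $A → \bar A$, and a strict monomorphism $\bar A → E/F_1$. The kernel $K = \Ker(W → A)$ maps to zero in~$E/F_1$ and is therefore a strict subobject of~$F_1$, so $μ(K) ≤ μ_1$ by semi-stability of~$F_1$; on the other hand $μ(A) ≤ μ(\bar A)$ by~(HN2) and the~$η$-isomorphism, and $μ(\bar A) ≤ μ_2$ by the induction hypothesis applied to the shorter filtration of~$E/F_1$. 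As~$W$ is the mediant extension of~$A$ by~$K$, we obtain $μ(W) ≤ μ_1$.

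Granting this, I would prove~(i) by induction on~$n$. Passing to~$E/F_1$, whose graded pieces $F_j/F_{j-1}$ ($j ≥ 2$) remain semi-stable with strictly decreasing slopes, the induction hypothesis makes $F_{i+1}/F_i$ costable in~$E/F_i$ for every $i ≥ 1$, so only the case $i = 0$ is left: that $F_1$ is costable in~$E$. Let~$F^{*}$ be the canonical semi-stable costable subobject of Proposition~\ref{prop:existe-sscs}, of slope~$μ^{*}$. Comparing the semi-stable~$F_1$ with the costable~$F^{*}$ via Lemma~\ref{lem:sscs} (according as $F_1 ≼ F^{*}$ or not) gives $μ_1 ≤ μ^{*}$, while the estimate applied to~$E$ gives $μ^{*} ≤ μ_1$; hence $μ^{*} = μ_1$, and Lemma~\ref{lem:sscs} now forces $F_1 ≼ F^{*}$. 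Finally $F^{*}/F_1 ≼ E/F_1$ has slope $< μ_1$ by the estimate on~$E/F_1$, so if $F_1 ≺ F^{*}$ the mediant principle would give $μ^{*} < μ_1$; therefore $F^{*} = F_1$ is costable, completing the induction. The genuine obstacle is the maximal-slope estimate of the previous paragraph: it is the one point where the formal bookkeeping of sub- and quotient-slopes does not suffice and one must work inside the exact category, using that strict epimorphisms have kernels and that~(HN4) realises the image of~$W$ in~$E/F_1$ up to an~$η$-isomorphism.
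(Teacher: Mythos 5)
Your proof is correct, but there is no detailed proof in the paper to compare it against: the paper states this proposition purely as a citation of~\cite{HN1974}, under the blanket remark that ``most of the proofs from~\cite{HN1974} directly apply in~$\ro C$'', and only details the parts that differ (namely Lemma~\ref{lem:sscs}). What you have written is exactly the omitted verification, and it confirms the paper's claim: your only inputs are the additivity of~$d$ and~$r$ (your ``mediant principle''), axioms (HN2) and~(HN4), Lemma~\ref{lem:sscs} and Proposition~\ref{prop:existe-sscs}, and your maximal-slope estimate --- obtained by factoring $W \to E/F_1$ through (HN4) and bounding the kernel of the resulting strict epimorphism inside~$F_1$ --- is precisely the step where the abelian-category argument of~\cite{HN1974} needs adaptation; you adapt it in the same way the paper adapts Lemma~\ref{lem:sscs}. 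Two silent interpretive corrections you made should be flagged, since they contradict the paper's literal text while being forced by its own internal logic: (a)~you take ``costable'' to mean that every strictly larger strict subobject has strictly \emph{smaller} slope, whereas the paper's definition reads $\mu(F') > \mu(F)$; your reading is the intended one, because the paper's proof of Lemma~\ref{lem:sscs} deduces $\mu(F'_2) < \mu(F_2)$ from costability, which fails under the literal definition. (b)~You read condition~(ii) as strict decrease of the graded slopes $\mu(F_i/F_{i-1})$ rather than of the $\mu(F_i)$ as written; this too is forced, since with rank-one graded pieces of slopes $5, 1, 2$ one gets $\mu(F_i) = 5,\, 3,\, 8/3$ strictly decreasing while condition~(i) fails, so the proposition is false under the literal reading. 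Finally, a small economy: your closing detour through~$F^{*}$ and Proposition~\ref{prop:existe-sscs} is unnecessary, because once the maximal-slope estimate is available, any $F_1 \prec F' \preccurlyeq E$ has $\mu(F'/F_1) \leq \mu_2 < \mu_1$ by the estimate applied to~$E/F_1$, and the mediant principle then gives $\mu(F') < \mu_1$ directly; this proves costability of~$F_1$ in one line.
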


A filtration satisfying these conditions is called a
\emph{Harder-Narasimhan} filtration on~$E$.

\begin{prop}[{\cite[Prop.~1.3.9]{HN1974}}]\label{prop:fil-existe}
Any non-zero object~$E$ of~$\ro C$ has a unique Harder-Narasimhan
filtration.
\end{prop}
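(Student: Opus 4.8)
The plan is to establish existence and uniqueness simultaneously by induction on the rank $r(E) = r(η(E))$, which is a non-negative integer and is positive for $E ≠ 0$ by axiom~(HN1). The whole argument is driven by Proposition~\ref{prop:existe-sscs}: it supplies, for every object, a \emph{unique} semi-stable costable strict subobject~$F_1 ≼ E$, and this~$F_1$ will serve as the first step of the filtration. Being semi-stable, $F_1$ is non-zero, so~$r(F_1) > 0$; applying the exact functor~$η$ to the sequence~$0 → F_1 → E → E/F_1 → 0$ and using additivity of~$r$ gives~$r(E/F_1) = r(E) - r(F_1) < r(E)$ whenever~$F_1 ≺ E$. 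When instead~$F_1 = E$, the object~$E$ is itself semi-stable and the filtration~$0 ≺ E$ is Harder-Narasimhan; this is the base case.

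For \emph{existence}, the induction hypothesis gives a Harder-Narasimhan filtration $0 = \bar F_0 ≺ \bar F_1 ≺ \dots ≺ \bar F_{n-1} = E/F_1$. Pulling it back along the quotient map produces a filtration $0 = F_0 ≺ F_1 ≺ F_2 ≺ \dots ≺ F_n = E$ in which $F_i$ is the preimage of $\bar F_{i-1}$, so that $F_i/F_1 = \bar F_{i-1}$ and $F_i/F_{i-1} \cong \bar F_{i-1}/\bar F_{i-2}$ for $i ≥ 2$, while $F_1/F_0 = F_1$. Every successive quotient is thus semi-stable, and it remains to check condition~(i) of Proposition~\ref{prop:fil-unique}. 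For $i ≥ 1$ the canonical isomorphism $E/F_i \cong (E/F_1)/\bar F_{i-1}$ carries $F_{i+1}/F_i$ to $\bar F_i/\bar F_{i-1}$, and the costability of the latter in $(E/F_1)/\bar F_{i-1}$ is precisely part of the Harder-Narasimhan condition for $E/F_1$. For $i = 0$ the condition asserts that $F_1$ is costable in $E$, which holds by the choice of~$F_1$. Hence condition~(i) holds for all~$i$, and the filtration is Harder-Narasimhan.

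For \emph{uniqueness}, let $0 = F_0 ≺ \dots ≺ F_n = E$ be any Harder-Narasimhan filtration. Condition~(i) with $i = 0$ says that $F_1$ is costable in $E$, and $F_1 = F_1/F_0$ is semi-stable; by the uniqueness clause of Proposition~\ref{prop:existe-sscs} this $F_1$ must coincide with the distinguished subobject used above, so the first step is forced. The induced filtration $0 ≺ F_2/F_1 ≺ \dots ≺ E/F_1$ again satisfies condition~(i), through the same identifications $E/F_i \cong (E/F_1)/(F_i/F_1)$, and is therefore a Harder-Narasimhan filtration of $E/F_1$; by the induction hypothesis it is unique, which determines the remaining steps.

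The substantive ingredients — the comparison Lemma~\ref{lem:sscs} and the existence of a maximal destabilising subobject in Proposition~\ref{prop:existe-sscs} — are already in place, so the present argument is essentially bookkeeping. Taking condition~(i) of Proposition~\ref{prop:fil-unique} as the working definition of a Harder-Narasimhan filtration means that no direct verification of the strict monotonicity of slopes is needed; everything reduces to uniqueness of the first step plus induction on the rank. I expect the only delicate point to be confirming that the third-isomorphism identifications $E/F_i \cong (E/F_1)/(F_i/F_1)$, and the preimages $F_i$ of the $\bar F_{i-1}$, are genuine strict subobjects and quotients in the exact category~$\ro C$, so that~(HN3) applies and condition~(i) transfers cleanly between $E$ and $E/F_1$.
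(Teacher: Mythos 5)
Your proof is correct and matches the paper's approach: the paper gives no argument of its own for this proposition, deferring to \cite[Prop.~1.3.9]{HN1974} with the remark that those proofs apply directly in~$\ro C$, and your induction on the rank, taking the unique semi-stable costable subobject of Proposition~\ref{prop:existe-sscs} as the first step and using condition~(i) of Proposition~\ref{prop:fil-unique} to pass between filtrations of~$E$ and of~$E/F_1$, is precisely that classical argument. The exact-category bookkeeping you flag at the end (preimages of strict subobjects along strict epimorphisms, and the Noether isomorphisms $E/F_i \cong (E/F_1)/(F_i/F_1)$) is standard in exact categories and holds in~$\ro C$, so it poses no real obstacle.
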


Given such a filtration~$0 = F_0 ≺ … ≺ F_n = E$ and~$α ∈ ℚ$, we
define~$\Fil^{α} E = F_r$ where~$r$ is the largest index such
that~$μ(F_r) ≥ α$. The \emph{slope filtration} of~$E$ is the decreasing
filtration~$(\Fil^{α} E)$; its graded quotients are zero except for a
finite number of values~$α$, for which they are semi-stable.

\subsection{Harder-Narasimhan structure on oblique Banach-Colmez
spaces}

\begin{lem}\label{lem:oblique}
Let~$Y$ be an oblique Banach-Colmez space and~$X ⊂ Y$ be a
sub-Banach-Colmez space. Then:
\begin{enumerate}
\item $X$~is oblique;
\item $\overline{X} = X^+\dR ∩ Y$~is a sub-oblique-space of~$Y$, and the
quotient~$Y/\overline{X}$ is oblique;
\item if $Y/X$~is oblique, then~$X = \overline{X}$.
\end{enumerate}
\end{lem}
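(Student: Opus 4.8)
The plan is to establish the three assertions in order, using the characterizations of constructibility and obliqueness from Proposition~\ref{prop:constructible} together with the exactness and faithfulness of the functor $X \mapsto X^+\dR$ (Proposition~\ref{prop:dR-ex-fid}). Recall that $Y$ oblique means $Y$ is constructible and $Y \to Y\dR$ is injective; equivalently, $Y \hookrightarrow Y^+\dR$ and the image has no $B^+\dR$-torsion, so $Y$ embeds in the free module $Y\dR = Y^+\dR \otimes_{B^+\dR} B\dR$.

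For (i), the point is that a sub-object of a constructible space is constructible, and that obliqueness passes to subspaces. First I would note that $X \subset Y \subset Y\dR$, so $X$ itself embeds in the finite-type $B^+\dR$-module $Y^+\dR$; by criterion~(iii) of Proposition~\ref{prop:constructible}, $X$ is constructible. Since $Y \to Y\dR$ is injective and $X \subset Y$, the composite $X \to X\dR \to Y\dR$ — which factors the inclusion $X \hookrightarrow Y\dR$ through the natural map $X^+\dR \to Y^+\dR$ — shows $X \to X\dR$ is injective as well; hence $X$ is oblique. The only care needed is to check that the envelope formation is compatible with the inclusion, i.e. that the induced map $X^+\dR \to Y^+\dR$ is injective, which follows from left-exactness of $(-)^+\dR$ on the subobject $X \subset Y$ via Proposition~\ref{prop:dR-ex-fid}.

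For (ii), I would set $\overline X = X^+\dR \cap Y$, the preimage of the $B^+\dR$-submodule $X^+\dR \subset Y^+\dR$ under the inclusion $Y \hookrightarrow Y^+\dR$. As a sub-Banach-Colmez space of the oblique $Y$ it is oblique by part~(i), and its envelope is exactly $X^+\dR$ (saturated inside $Y^+\dR$). The essential computation is to identify $(Y/\overline X)^+\dR$ with $Y^+\dR / X^+\dR$ using the exactness of $(-)^+\dR$, and then to verify this quotient module is torsion-free, so that $Y/\overline X \to (Y/\overline X)\dR$ is injective, giving obliqueness of the quotient. Torsion-freeness is where obliqueness of $Y$ is genuinely used: since $Y$ sits inside the free module $Y\dR$, the saturation $\overline X = X^+\dR \cap Y$ is chosen precisely so that $Y^+\dR/X^+\dR$ has no torsion.

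For (iii), assuming $Y/X$ is oblique, I would compare $X$ with its saturation $\overline X$. Applying the exact functor $(-)^+\dR$ to $0 \to X \to Y \to Y/X \to 0$ yields $0 \to X^+\dR \to Y^+\dR \to (Y/X)^+\dR \to 0$, and obliqueness of $Y/X$ forces $(Y/X)^+\dR$ to be torsion-free; hence $X^+\dR$ is already saturated in $Y^+\dR$, so $X^+\dR \cap Y = \overline X$ equals $X$. The main obstacle throughout is the careful bookkeeping of torsion versus free parts of the envelope modules and confirming that $(-)^+\dR$ commutes with the relevant sub- and quotient-formations; once exactness (Proposition~\ref{prop:dR-ex-fid}) is in hand, each assertion reduces to the elementary fact that saturations in a free $B^+\dR$-module are detected by vanishing of torsion in the quotient.
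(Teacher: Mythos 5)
Your parts (i) and (iii) essentially reproduce the paper's arguments. For (i), the paper's whole proof is the observation $X \subset Y \subset Y^+\dR$ with $Y^+\dR$ finite free; your extra step about injectivity of $X^+\dR \to Y^+\dR$ is superfluous (the factorization argument needs only naturality of the envelope map), and justifying it by the exactness of Proposition~\ref{prop:dR-ex-fid} is shaky at that point, since the quotient $Y/X$ is not yet known to be constructible. Part (iii) is a correct, slightly expanded version of the paper's argument.

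Part (ii) has a genuine gap, exactly at what you call ``the essential computation''. You assert that (the image of) $X^+\dR$ is saturated in $Y^+\dR$, i.e.\ that $Y^+\dR/X^+\dR$ is torsion-free, and that intersecting with $Y$ ``is chosen precisely so that'' this holds. It is not: forming $X^+\dR \cap Y$ does not enlarge the envelope, and nothing forces saturation. Concretely, take $Y = E_1 = \{x \in B^+\cris,\ \varphi(x) = px\}$, which is oblique with $Y^+\dR = B^+\dR$, and $X = \Qp(1) = \Fil^1 E_1$, a sub-Banach-Colmez space via the presentation $0 \to \Qp(1) \to E_1 \to \Cp \to 0$. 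The image of $X^+\dR = \Qp(1) \otimes_{\Qp} B^+\dR$ in $Y^+\dR$ is $tB^+\dR$, so $Y^+\dR/X^+\dR \cong \Cp$ is pure torsion; moreover $\overline{X} = tB^+\dR \cap E_1 = \Fil^1 E_1 = X$, so $Y/\overline{X} \cong \Cp$, which is \emph{not} oblique (it is a torsion module, so $\Cp\dR = 0$). Thus, read literally, assertion (ii) itself is false, and no bookkeeping of torsion can repair your argument.

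What the paper actually proves (and later uses) is the statement for the saturation $\overline{X} = X\dR \cap Y$, the pre-image in $Y$ of the $B\dR$-subspace $X\dR \subset Y\dR$: this is the definition appearing in the proof of Proposition~\ref{prop:oblique-hn}, and the formula $X^+\dR \cap Y$ in the lemma's statement is a slip. With this definition the paper's proof of (ii) is nearly tautological: $\overline{X}\dR = X\dR$, so $Y \cap \overline{X}\dR = \overline{X}$ holds by construction, which is precisely the injectivity of $Y/\overline{X} \to Y\dR/\overline{X}\dR$, and obliqueness of the quotient follows. (In the example above the saturation is $\overline{X} = E_1$, and $Y/\overline{X} = 0$.) Your proof of (iii) survives this correction: when $Y/X$ is oblique, exactness gives $Y^+\dR/X^+\dR \cong (Y/X)^+\dR$ free, so $X^+\dR$ \emph{is} then saturated, and $X\dR \cap Y = X^+\dR \cap Y = X$, the last equality because $Y/X$ embeds into its envelope.
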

\begin{proof}
(i) We have~$X ⊂ Y ⊂ Y^+\dR$, which is finite free; therefore, $X$~is
oblique.

(ii) Since $X$~and~$Y$ are oblique, the natural map~$Y/\overline{X} →
Y\dR / \overline{X}\dR$ is injective, and therefore $Y/\overline{X}$~is
oblique.

(iii) If $Y/X$~is oblique, then $Y/X ⊂ Y\dR / X\dR$; this arrow factors
through~$Y/\overline{X}$, which means that~$Y/X ⊂ Y/\overline{X}$.
Therefore, $X = \overline{X}$.
\end{proof}

\begin{prop}\label{prop:oblique-hn}
Let~$\BCO$ be the category of oblique Banach-Colmez spaces. The
category~$\BCO$, equipped with the functor~$η(X) = X\dR$, the rank function
equal to the dimension over~$B\dR$, and the height function equal to the
dimension of the Banach-Colmez space~$X$, is a Harder-Narasimhan
category.
\end{prop}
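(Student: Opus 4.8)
The plan is to realize $\BCO$ as an extension-closed full subcategory of the abelian category $\BC$, to take $η = (-)\dR$ as the fibre functor, and then to read off axioms (HN1)--(HN3) directly, deducing (HN4) from Proposition~\ref{prop:fibre-HN}. First I would fix the exact structure whose admissible short exact sequences are the sequences $0 → X' → X → X'' → 0$ that are exact in $\BC$ and whose three terms are oblique; the only point to verify is that $\BCO$ is stable under extension in $\BC$. Given oblique $X'$ and $X''$, Proposition~\ref{prop:dR-ex-fid} makes $0 → (X')^+\dR → X^+\dR → (X'')^+\dR → 0$ exact, and tensoring with $B\dR$ (flat over $B^+\dR$) keeps it exact; comparing it with $0 → X' → X → X'' → 0$, whose outer vertical maps $X' → X'\dR$ and $X'' → X''\dR$ are injective, a four-lemma argument shows that $X → X\dR$ is injective, so that $X$ is oblique. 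By Lemma~\ref{lem:oblique}(iii) the admissible monomorphisms are then precisely the saturated inclusions $X = \overline X$, and the admissible epimorphisms the surjections with saturated kernel. The fibre functor $η(X) = X\dR$ takes values in the abelian category of finite-dimensional $B\dR$-vector spaces and is exact, because $(-)^+\dR$ is exact on constructible spaces and $- ⊗_{B^+\dR} B\dR$ is exact.

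Next I would take $r = \dim_{B\dR}$ on $\ro C_{η}$ and the degree $d(X) = \dim X$, the Banach-Colmez dimension, so that $μ(X) = \dim X / \dim_{B\dR} X\dR$ specializes on $E_{d,h}$ to the expected slope $d/h$. Additivity of $r$ is that of the dimension of vector spaces, and additivity of $d$ on admissible sequences is Proposition~\ref{prop:quo}. That $d$ is $ℕ$-valued is the one input that needs the structure theory: for oblique $X$ the step $F^∞ X$ of Proposition~\ref{prop:fil0} is a torsion $B^+\dR$-module and a subspace of the oblique space $X$, hence oblique by Lemma~\ref{lem:oblique}(i), hence zero; thus $X$ is an extension of a connected oblique space by an étale one, and a connected oblique space is effective and so of non-negative dimension.

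The axioms are then short. For (HN1), $r(η(X)) = \dim_{B\dR} X\dR = 0$ forces $X\dR = 0$ and hence $X = 0$, since $X ↪ X\dR$. For (HN2), if $η(f)$ is an isomorphism for $f : E → E'$, then $f$ is injective, as $E \xrightarrow{f} E' ↪ E'\dR$ coincides with the injection $E ↪ E\dR \xrightarrow{\sim} E'\dR$; hence $d(E') = d(E) + \dim(E'/E) ≥ d(E)$ by Proposition~\ref{prop:quo}, and in case of equality $E'/E$ is étale, so that $0 → E → E' → E'/E → 0$ is admissible by Lemma~\ref{lem:oblique}(iii); applying the exact functor $η$ gives $(E'/E)\dR = 0$, and an étale oblique space with vanishing $B\dR$-fibre is zero, so $f$ is an isomorphism. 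For (HN3), a saturated subobject $X ⊂ E$ is recovered from its image $X\dR ⊂ E\dR$ through $X = (X\dR ∩ E^+\dR) ∩ E$, so $η$ is injective on subobjects.

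It remains to supply (HN4), for which Proposition~\ref{prop:fibre-HN} asks only for fibre products of strict monomorphisms compatible with those of $\ro C_{η}$. The fibre product of two saturated inclusions $X_1, X_2 ⊂ E$ is the intersection $X_1 ∩ X_2$, which is oblique by Lemma~\ref{lem:oblique}(i) and saturated because $(X_1)^+\dR ∩ (X_2)^+\dR$ is a saturated submodule of $E^+\dR$; flatness of $B\dR$ over $B^+\dR$ makes intersection commute with $- ⊗ B\dR$, whence $η(X_1 ∩ X_2) = (X_1)\dR ∩ (X_2)\dR = η(X_1) ∩ η(X_2)$, the required compatibility. I expect the genuine obstacle to lie in this foundational layer --- proving extension-closedness, so that the exact structure and the exactness of $η$ are available, and the $ℕ$-valuedness of the degree --- after which axioms (HN1)--(HN4) follow almost formally from the exactness and faithfulness of $η$ together with Lemma~\ref{lem:oblique}.
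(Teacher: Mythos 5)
Your proposal is correct and follows the same route as the paper: identical Harder--Narasimhan data (fibre functor $(-)\dR$, rank $\dim_{B\dR}$, degree the Banach--Colmez dimension), the same saturation-based verification of (HN1)--(HN3) via Lemma~\ref{lem:oblique}, and the same reduction of (HN4) to Proposition~\ref{prop:fibre-HN}, with fibre products of strict monomorphisms realized as intersections. Two remarks on the foundational layer, which is where you diverge from the paper. First, your extension-closedness argument cites Proposition~\ref{prop:dR-ex-fid} to make $0 \to (X')^+\dR \to X^+\dR \to (X'')^+\dR \to 0$ exact; as stated, that proposition concerns the functor $(-)^+\dR$ on \emph{constructible} spaces, and constructibility of the middle term $X$ is part of what you are proving (obliqueness includes it), so the citation is circular as written. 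The repair stays inside the paper's toolkit: the proof of Proposition~\ref{prop:dR-ex-fid} rests on $\Ext^1_{\BC}(X'',\Bt)=0$ for constructible $X''$, and applying this to the quotient alone yields exactness of $0 \to \dualdR{X''} \to \dualdR{X} \to \dualdR{X'} \to 0$, hence, after dualizing by the injective $B^+\dR$-module $\Bt$, the exact sequence of envelopes; your four-lemma chase then gives injectivity of $X \to X\dR$, which also delivers constructibility, since this map factors through $X^+\dR$ (criterion (ii) of Proposition~\ref{prop:constructible}). The paper instead simply characterizes $\BCO$ as the spaces with $X^+\dR$ finite free over $B^+\dR$ and asserts the inherited exact structure, so your version, once repaired, is the more explicit one. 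Second, your detour through Proposition~\ref{prop:fil0} to prove the degree is $\mathbb{N}$-valued is unnecessary: the dimension of \emph{any} Banach--Colmez space is non-negative by construction, being the $\Cp$-dimension of the $L$-term of a presentation (well defined by Corollary~\ref{cor:dh}), with no structure theory needed.
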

\begin{proof}
The category~$\BCO$ is the full subcategory of~$\BC$ of objects~$X$ such
that $X^+\dR$ is finite free over~$B^+\dR$. Therefore, it is an exact
category, and the functor~$η$ is exact. Moreover, the rank and degree
functions are additive on exact sequences.

Axiom~(HN1) is evident. Let~$f: X' → X$ be such that~$η(f): X'\dR → X\dR$
is an isomorphism; then we have~$X' ⊂ X ⊂ X^+\dR = (X')^+\dR$, and
therefore $X'$~is a sub-object of~$X$. Since~$\dim(X') = \dim(X) -
\dim(X/X')$, axiom~(HN2) is satisfied. Moreover, if~$X', X'' ⊂ X$
and~$(X')\dR = (X'')\dR$, then we have~$X = X'$, and therefore~(HN3) is
satisfied.

Finally, let~$X', X'' ⊂ Y$ be strict subobjects in~$\BCO$, and let~$X =
X' ×_{Y} X''$. Since~$X^+\dR = (X')^+\dR ×_{Y^+\dR} (X'')^+\dR$,
$X^+\dR$~is a finite free $B^+\dR$-module, and therefore $X$~is oblique.
Let~$\overline{X} = X\dR ∩ Y$; then we have~$\overline{X} = \overline{X'}
∩ \overline{X''}$. Since $X'$~and~$X''$ are strict, we
have~$\overline{X'} = X'$ and~$\overline{X''} = X''$, and
therefore~$\overline{X} = X$ by Lemma~\ref{lem:oblique}. Therefore,
$X$~is a strict subobject of~$Y$. By Proposition~\ref{prop:fibre-HN},
(HN4)~is satisfied.
\end{proof}

Let~$M$ be a $B^+\dR$-module of length~$d$, $V$~be a $h$-dimensional
$\Qp$-vector space, and~$λ: M → V ⊗_{\Qp} \Bt$ be a $B^+\dR$-linear map.
Then the Banach-Colmez space~$X(λ) = (B_e ⊗_{\Qp} V) ×_{\Bt ⊗_{\Qp} V} M$
is oblique if, and only if, $λ$~is injective.
We say that $λ$~is \emph{stable} if it is injective and $X(λ)$~is stable.

\begin{lem}\label{lem:ladder1}
Let~$λ: B_d → V ⊗_{\Qp} \Bt$ be a stable $B^+\dR$-linear map and~$h =
\dim_{\Qp} V$. Let~$ι: E_{d,h} → (B^+\dR)^h$ be the envelope map.

Then there exists an analytic, injective map~$u: \Hom (V, E_{d-1,h}) →
\Hom (V, E_{d,h})$ such that the square 
\begin{equation*}
\xymatrix{
\Hom (V, E_{d,h}) \ar[r]^{\trans{λ} ⊗ ι} & \Hom_{B^+\dR} (B_d, \Bt^h) \\
\Hom (V, E_{d-1, h}) \ar[u]^{u} \ar[r]^{\trans{λ} ⊗ ι} 
  & \Hom_{B^+\dR} (B_{d-1}, \Bt^h) \ar[u]}
\end{equation*}
is commutative.
\end{lem}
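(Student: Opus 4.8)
The plan is to transport the whole square to the de Rham envelope side, where every corner is a $B^+\dR$-module and every arrow is $B^+\dR$-linear, and to produce $u$ as post-composition by a single analytic ``multiplication'' map $\tilde u\colon E_{d-1,h}\to E_{d,h}$. First I would record the two presentations: the envelopes of $E_{d,h}$ and $E_{d-1,h}$ are both the free module $(B^+\dR)^h$, and $\iota$, $\iota'$ are the envelope embeddings, which by Proposition~\ref{prop:Hom-dR} detect analytic maps as exactly the $B^+\dR$-linear maps carrying one crystalline subspace into the other. The right-hand vertical arrow is the pullback $q^{*}$ along the quotient $q\colon B_d\to B_{d-1}$, and the bottom horizontal arrow uses the data $\lambda\circ t\colon B_{d-1}\to V\otimes_{\Qp}\Bt$ obtained by restricting $\lambda$ along the inclusion $t\colon B_{d-1}\hookrightarrow B_d$ onto $\Fil^1 B_d$; since $\lambda\circ t$ is again injective, the bottom row has the same shape as the top.

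Second, I would construct $\tilde u$. By Proposition~\ref{prop:hom-ED} we have $\Hom_{\BC}(E_{d-1,h},E_{d,h})=\Hom_{K_0[\phi]}(D_{d,h},D_{d-1,h}\otimes_{K_0}B^+\cris)$, so every analytic map $E_{d-1,h}\to E_{d,h}$ is (up to $K_0$-combinations) multiplication by an element $s\in E_{1,h}=\acco{x\in B^+\cris,\ \phi^h(x)=px}$; indeed $\phi^h(sx)=ps\cdot p^{d-1}x=p^{d}sx$ for $x\in E_{d-1,h}$, so $sx\in E_{d,h}$. The point is to choose $s$ so that the induced envelope map $\iota\circ\tilde u$ agrees with the period multiplication $t\cdot\iota'$ modulo $\Fil^{d}(B^+\dR)^h$; concretely, I would fix $s$ by prescribing its $\theta\phi^{r}$-coefficients exactly as $\eta$ is built in Lemma~\ref{lem:Eeta-surj} and in the explicit presentation of $E_{d,h}$ from Proposition~\ref{prop:ED-BC}. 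For such an $s$ the map $\tilde u=(s\cdot)$ is analytic (a morphism of isocrystals, hence analytic by Proposition~\ref{prop:ED-BC}) and injective (multiplication by a non-zero element of the domain $B^+\cris$), so $u=\tilde u_{*}$ is analytic and injective.

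Finally I would check commutativity on the cyclic generator $1\in B_d$. Writing $\lambda(1)=\sum_j v_j\otimes b_j$ with $b_j\in\Bt$, the top arrow sends $f$ to the map whose $k$-th component carries $1$ to $\sum_j\pa{\iota f(v_j)}_k\,b_j$, while the bottom arrow sends $g$ to $1\mapsto\sum_j\pa{\iota' g(v_j)}_k\,t\,b_j$ on $B_{d-1}$, which $q^{*}$ lifts to the same formula on $B_d$. Hence the images of $g$ by the two routes agree as soon as $\pa{\iota\,\tilde u\,g(v_j)}_k\equiv t\,\pa{\iota' g(v_j)}_k\pmod{\Fil^{d}B^+\dR}$, because $\Fil^{d}B^+\dR$ annihilates every $b_j$ in $\Bt$ (as $\lambda$ factors through $B_d$). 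I expect the main obstacle to be precisely this envelope identity: the crystalline shift $E_{d-1,h}\to E_{d,h}$ raises the slope by $1/h$, whereas the period inclusion $B_{d-1}\hookrightarrow B_d$ raises the filtration degree by $1$, so $\tilde u$ cannot literally be multiplication by $t$ once $h>1$; the content of the lemma is that a suitable $s\in E_{1,h}$ nevertheless reproduces $t\cdot\iota'$ to order $\Fil^{d}$. I would settle this through the explicit $\theta$-expansions of Proposition~\ref{prop:ED-BC}, reducing it to the non-vanishing of a determinant of the same type as in Lemma~\ref{lem:Eeta-surj}.
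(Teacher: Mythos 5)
Your plan hinges on producing a \emph{single} element $s\in E_{1,h}$ such that $\tilde u=(s\,\cdot\,)$ satisfies $\iota(\tilde u(x))\equiv t\,\iota'(x)\pmod{\Fil^d B^+\dR}$ for all relevant $x\in E_{d-1,h}$, and this object does not exist once $h\geq 2$ (and $d\geq 2$, the only non-trivial case since $B_0=0$). Indeed, since there are $x\in E_{d-1,h}$ with $\theta(\phi^r(x))\neq 0$ for every $r$ (each condition $\theta(\phi^r(x))=0$ cuts out a proper $\Qp$-subspace, and finitely many proper subspaces cannot cover $E_{d-1,h}$), such $\phi^r(x)$ are units of the discrete valuation ring $B^+\dR$, and your congruence forces $\phi^r(s)\equiv t\pmod{\Fil^d B^+\dR}$ for all $r=0,\dots,h-1$. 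No $s\in E_{1,h}$ satisfies this: put $N=s\,\phi(s)\cdots\phi^{h-1}(s)\in B^+\cris$; from $\phi^h(s)=ps$ one gets $\phi(N)=pN$, so $N\in E_1$, while expanding the product the congruences give $N\equiv t^h\pmod{\Fil^d}$, hence $N\in E_1\cap\Fil^2 B^+\dR$ (using $d,h\geq 2$). But $E_1\cap\Fil^1 B^+\dR=\Qp(1)=\Qp t$ by the exact sequence~\eqref{eq:EmBm} (this is the fact used in Lemma~\ref{lem:M-plein}), so $E_1\cap\Fil^2 B^+\dR=0$ and $N=0$; as $B^+\dR$ is a domain and $\phi$ is injective on $B^+\cris$, this yields $s=0$, contradicting $s\equiv t\not\equiv 0\pmod{\Fil^d}$. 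So the obstruction you correctly identified (crystalline shift of slope $1/h$ versus filtration shift of $1$) is not a technical difficulty to be settled by prescribing $\theta\phi^r$-coefficients as in Lemma~\ref{lem:Eeta-surj}: $\theta$-values are depth-one data, whereas you need a depth-$d$ congruence, and that congruence is contradictory. (For $h=1$ one can take $s=t$, but the lemma is needed precisely for stable $\lambda$ with $h\geq 2$.)

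This is exactly the point where the paper's proof diverges from yours: the multiplier is allowed to depend on $v\in V$. The paper introduces the reduction $\iota_{\Cp}=(\theta\phi^r)_{r}$ of the envelope of $E_{1,h}$, invokes the fundamental lemma of~\cite{banach} to get that $\trans{\lambda}\otimes\iota_{\Cp}\colon\Hom(V,E_{1,h})\to\Hom_{B^+\dR}(B_d,\Cp(-1))^h$ is surjective with kernel $V'$ an $h$-dimensional $\Qph$-vector space, chooses a non-zero $c\in V'$, and sets $u(f)(v)=f(v)\,c(v)$. The membership $c\in V'$ is a condition \emph{relative to $\lambda$}: it constrains the $\theta$-values $\theta(\phi^r(c(v_j)))$ only jointly, paired against the leading coefficients of $\lambda(1)=\sum_j v_j\otimes b_j$, and it admits a $\Qp$-space of solutions of dimension $h^2$; the commutativity asserted in the paper then rests on cancellation in the sum over $j$, not on the termwise congruences that sink your construction. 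If you want to salvage your write-up, keep your framing of the square (your identification of the vertical and bottom arrows agrees with the paper's), keep your analyticity and injectivity arguments for a multiplication-type $u$, but replace the fixed $s$ by a kernel element $c\in\Ker(\trans{\lambda}\otimes\iota_{\Cp})$ and carry out the commutativity verification with the sum over the components of $\lambda$ kept intact.
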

\begin{proof}
Let~$ι_\Cp = t ι: E_{1,h} → \Cp^h$ be the map defined by~$ι_\Cp(x) = θ(ι(x)) =
(θ φ^r(x))_{r=0,…,h-1}$. Then, by the fundamental lemma~\cite[Lemma
2.3.3]{banach}, the
map~$\trans{λ} ⊗ ı_\Cp: \Hom (V, E_{1,h}) → \Hom_{B^+\dR} (B_d, \Cp(-1))^h$
is surjective, and its kernel~$V'$ is a $h$-dimensional $\Qph$-vector space.
Let~$c$ be a non-zero element of~$V'$, and define~$u: \Hom (V, E_{d-1,h})
→ \Hom (V, E_{d,h})$ by~$u(f)(v) = f(v) c(v)$. Then $u$~is analytic and
injective, and the square diagram is commutative as required.
\end{proof}

\begin{lem}\label{lem:ladder2}
Let~$λ: B_d → V ⊗_{\Qp} \Bt$ be a stable $B^+\dR$-linear map and~$h =
\dim_{\Qp} V$. Let~$ι: E_{d,h} → (B^+\dR)^h$ be the envelope map.
Then $(\trans{λ} ⊗ ι) (\Hom (V, E_{d,h}))$ contains the image
of~$\Hom_{B^+\dR} (\Cp, \Bt^h)$ in~$\Hom_{B^+\dR} (B_d, \Bt^h)$.
\end{lem}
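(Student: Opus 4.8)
The plan is to first pin down exactly what the subspace ``image of $\Hom_{B^+\dR}(\Cp,\Bt^h)$'' is, and then to reach it from $\Hom(V,E_{d,h})$ by climbing the ladder of Lemma~\ref{lem:ladder1}. The map $\Hom_{B^+\dR}(\Cp,\Bt^h)\to\Hom_{B^+\dR}(B_d,\Bt^h)$ is precomposition with the reduction $B_d\twoheadrightarrow B_d/\Fil^1=\Cp$, so its image is the set of $B^+\dR$-linear maps $B_d\to\Bt^h$ that kill $\Fil^1 B_d$ and take values in the $t$-torsion $\ker(t)^h=\Cp(-1)^h\subset\Bt^h$. Equivalently it is $\Hom_{B^+\dR}(B_d,\Cp(-1))^h$, the socle $S_d$ of the finite-length $B^+\dR$-module $\Hom_{B^+\dR}(B_d,\Bt^h)$. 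The point worth stressing is that this socle is precisely the target of the surjection furnished inside the proof of Lemma~\ref{lem:ladder1}.

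I would then prove $S_d\subseteq(\trans\lambda\otimes\iota)(\Hom(V,E_{d,h}))$ by induction on $d$ using the commutative square of Lemma~\ref{lem:ladder1}. Write $j\colon\Hom_{B^+\dR}(B_{d-1},\Bt^h)\to\Hom_{B^+\dR}(B_d,\Bt^h)$ for the right-hand vertical map, i.e. precomposition with $B_d\twoheadrightarrow B_{d-1}$; it is injective, and since $\ker t\subset\ker t^{d-1}$ and $\Cp=B_1$ is a quotient of $B_{d-1}$ it carries the socle $S_{d-1}$ onto $S_d$. The square says $(\trans\lambda\otimes\iota)\circ u=j\circ(\trans\lambda\otimes\iota)$, so the image of the top map contains $j$ applied to the image of the bottom map. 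By the inductive hypothesis the bottom image contains $S_{d-1}$, whence the top image contains $j(S_{d-1})=S_d$, as wanted.

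The base case $d=1$ is exactly the fundamental-lemma input recorded in Lemma~\ref{lem:ladder1}: with $\iota_\Cp=t\iota$, the map $\trans\lambda\otimes\iota_\Cp\colon\Hom(V,E_{1,h})\to\Hom_{B^+\dR}(B_d,\Cp(-1))^h$ is surjective, which for $d=1$ says that $\Hom(V,E_{1,h})\to\Hom_{B^+\dR}(\Cp,\Bt^h)=\Cp(-1)^h=S_1$ is onto. Alternatively, reading that same surjectivity for the given $d$ shows that $S_d$ already lies in the image of the $\iota_\Cp$-map; one then only needs to factor that map through $\trans\lambda\otimes\iota$, which is what a single step of the ladder accomplishes.

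The hard part will be the bookkeeping that makes the induction legitimate rather than the homological step itself: I must check that the bottom horizontal arrow of the ladder is again an instance of the present lemma — that it is built from the same $\lambda$ through the identifications of Lemma~\ref{lem:ladder1} and from the envelope with the normalization $\iota_\Cp=t\iota$ — and that the Tate twist hidden in $\iota_\Cp=t\iota$ matches the simple submodule $\ker(t)\subset\Bt$ used in the socle identification. Keeping these twists and socle identifications consistent while climbing from level $1$ to level $d$ is the real content; every surjectivity statement needed is supplied by the fundamental lemma via Lemma~\ref{lem:ladder1}, and the stability of $\lambda$ enters only to guarantee that the kernel $V'$ in that lemma is the expected $\Qph$-line, so that the map $u$ is available at each step.
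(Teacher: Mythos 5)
Your proof is correct and is essentially the paper's own argument: the paper likewise applies Lemma~\ref{lem:ladder1} repeatedly ($d-1$ times in succession, which is exactly your induction on~$d$) to connect $\Hom(V,E_{d,h})$ to $\Hom(V,E_{1,h})$, and then invokes the fundamental lemma to get surjectivity of the bottom map onto $\Hom_{B^+\dR}(\Cp,\Bt^h)$, so that the image of the top map contains its image in $\Hom_{B^+\dR}(B_d,\Bt^h)$. Your explicit socle identification and the bookkeeping caveats (stability descending to the lower levels, consistency of the Tate twists) are points the paper handles just as implicitly, so there is no substantive difference between the two arguments.
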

\begin{proof}
By applying $d-1$ times Lemma~\ref{lem:ladder1} in succession, we get a
commutative square
\begin{equation}
\xymatrix{
\Hom (V, E_{d,h}) \ar[r]^{\trans{λ} ⊗ ι} & \Hom_{B^+\dR} (B_d, \Bt^h) \\
\Hom (V, E_{1, h}) \ar[u]^{u} \ar[r]^{\trans{λ} ⊗ ι}
  & \Hom_{B^+\dR} (\Cp, \Bt^h) \ar[u].}
\end{equation}
By the fundamental lemma~\cite[Lemma~2.3.3]{banach}, the bottom map is surjective. Therefore,
the image of~$\trans{λ} ⊗ ι$ contains~$\Hom(\Cp, \Bt^h)$.
\end{proof}

\begin{prop}\label{prop:lf-stable}
Let~$λ: M → V ⊗_{\Qp} \Bt$ be a stable $B^+\dR$-linear map, $h =
\dim_{\Qp} V$ and~$d$ be the length of~$M$. Let~$ι: E_{d,h} → (B^+\dR)^h$
be the envelope map.

Then~$\trans{λ} ⊗ ı: \Hom (V, E_{d,h}) → \Hom (M, \Bt^h)$ is surjective.
\end{prop}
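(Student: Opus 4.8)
The plan is to bootstrap the surjectivity from the two ladder lemmas, first for cyclic~$M$ and then for an arbitrary finite-length~$M$ by dévissage, using throughout that $\Bt$~is an injective $B^+\dR$-module. It is worth first recording the numerology that makes the statement sharp. The source $\Hom(V, E_{d,h})$~is a Banach-Colmez space of dimension~$dh$ and height~$h^2$, whereas the target $\Hom(M, \Bt^h)$~is a finite-length $B^+\dR$-module, hence connected, of dimension~$dh$ and height~$0$. Since dimension is additive on short exact sequences in the abelian category~$\BC$, the kernel and cokernel of $\transpose λ ⊗ ι$~have equal dimension; and as the cokernel is a quotient of a connected space, it is connected, hence vanishes precisely when its dimension vanishes. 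Therefore $\transpose λ ⊗ ι$~is surjective if, and only if, its kernel is étale.

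First I would settle the cyclic case $M = B_d$. Because $\Bt$~is injective, $\Hom(-, \Bt^h)$~is exact, so $\Hom(B_d, \Bt^h)$~is filtered with $h$-dimensional graded pieces, the bottom one being the image of $\Hom(\Cp, \Bt^h)$. Lemma~\ref{lem:ladder2} places this bottom piece inside the image of $\transpose λ ⊗ ι$, its engine being the fundamental-lemma surjection $\transpose λ ⊗ ι_{\Cp} \colon \Hom(V, E_{1,h}) → \Hom(B_d, \Cp(-1))^h$. For the remaining pieces I would induct on~$d$ using the commutative square of Lemma~\ref{lem:ladder1}: its injective vertical~$u$ pushes the surjection at level~$d-1$ (known by induction) up to level~$d$, where the resulting image fills out everything complementary to the bottom piece already covered. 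The two contributions together exhaust $\Hom(B_d, \Bt^h)$, so $\transpose λ ⊗ ι$~is onto.

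Finally I would reduce an arbitrary finite-length~$M$ to the cyclic case by dévissage, choosing a composition series $0 = M_0 ⊂ \cdots ⊂ M_d = M$ with simple quotients $M_i/M_{i-1} \cong \Cp$ and again using the exactness of $\Hom(-, \Bt^h)$ to break the target into its length-$h$ graded pieces. The hard part will be this dévissage, not the climbing: Lemmas~\ref{lem:ladder1} and~\ref{lem:ladder2} are phrased for a stable map out of~$B_d$, so to run the induction through a non-cyclic~$M$ I must check that the auxiliary maps manufactured at each rung remain stable — equivalently, by the numerology of the first paragraph, that the kernel of $\transpose λ ⊗ ι$~never develops a connected part. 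This is exactly the role of the stability of~$λ$: it forbids any subquotient met along the composition series from having slope $≥ μ(X(λ))$, which is what keeps the fundamental-lemma input non-degenerate and the climbing map~$u$ injective at every step. Granting this, surjectivity follows.
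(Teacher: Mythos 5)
Your scaffolding (reduce to cyclic~$M$, induct on~$d$ from the fundamental lemma, use the two ladder lemmas and dimension/height counting) matches the paper's, and your opening numerology is correct; but your inductive step --- the heart of the proof --- has a genuine gap. The unlabelled vertical map of Lemma~\ref{lem:ladder1} is the pullback along the projection~$B_d → B_{d-1}$, so pushing the level-$(d-1)$ surjection through that square shows only that the image of~$\trans{λ} ⊗ ι$ contains the submodule of~$\Hom (B_d, \Bt^h)$ consisting of maps killing~$\Fil^{d-1} B_d$. That submodule is \emph{not} complementary to the bottom piece supplied by Lemma~\ref{lem:ladder2}: it contains it, since a map killing~$\Fil^{1} B_d$ a fortiori kills~$\Fil^{d-1} B_d$. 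Your two contributions are nested, and their sum has length~$(d-1)h < dh$; the top graded piece is never reached. Moreover, no other choice of vertical map could repair this: the image of~$\Hom (\Cp, \Bt^h)$ is the socle of the finite-length module~$\Hom (B_d, \Bt^h)$, any analytic map between finite-length $B^+\dR$-modules is $B^+\dR$-linear (Lemma~\ref{lem:M-plein}), and the socle is essential, so any non-zero pushed-up submodule meets it; hence ``submodule plus socle'' always falls short of length~$dh$. The paper's inductive step is the dual arrangement, covering a \emph{quotient} of the target rather than a second submodule: it applies the induction hypothesis to the stable map~$λ' : B_{d-1} →^{t} B_d → V ⊗_{\Qp} \Bt$ (stability of~$λ'$ must itself be checked), which identifies the kernel~$V'$ of~$\trans{λ'} ⊗ ι$ as an $h$-dimensional $\Qph$-vector space; multiplication by~$E_{1,h}$ then realizes the kernel of the restriction map~$\Hom (V, E_{d,h}) → \Hom (B_{d-1}(1), \Bt^h)$, a dimension/height count shows this restriction is surjective, and Lemma~\ref{lem:ladder2} covers exactly~$\Hom (\Cp, \Bt^h)$, the kernel of the quotient map~$\Hom (B_d, \Bt^h) → \Hom (B_{d-1}(1), \Bt^h)$; a diagram chase between the two exact rows then concludes. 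Covering a submodule and the quotient by it closes the argument; covering two submodules cannot.

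Your handling of non-cyclic~$M$ is also misdirected. No dévissage along a composition series is needed, nor is one possible: if~$M = M' ⊕ M''$ with both summands non-zero, injectivity of~$λ$ splits it as~$λ' ⊕ λ''$, and one of the strict subobjects~$X(λ')$, $X(λ'')$ of~$X(λ)$ has slope~$≥ μ(X(λ))$, contradicting stability. So stability forces~$M$ to be indecomposable, hence~$M = B_d$, and the reduction is one line --- this is exactly what the paper does. The statement you defer (``the auxiliary maps manufactured at each rung remain stable'') is precisely the unproved content of your dévissage, and it cannot be proved as stated, because for a genuinely decomposable~$M$ there is no stable~$λ$ at all.
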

\begin{proof}
First assume that there exist non-zero~$M'$, $M''$ such that~$M = M' ⊕
M''$. Since $λ$~is injective, there exist $B^+\dR$-linear maps~$λ': M' →
V' ⊗ \Bt$ and~$λ'': M'' → V'' ⊗ \Bt$ such that $λ: M → V ⊗_{\Qp} \Bt$~is
the direct sum of~$λ'$ and~$λ''$. Then at least one of~$μ(X(λ'))$
and~$μ(X(λ''))$ is greater than~$μ(X(λ))$, which contradicts the
stability of~$λ$. Therefore, we may assume that $M$~is simple,
which implies that~$M = B_d$.
We now prove the proposition by induction on~$d$.
The case~$d = 1$ corresponds to the fundamental
lemma~\cite[Lemma~2.3.3]{banach}.

Let~$λ'$ be the composed map~$B_{d-1} →^{t} B_d → V ⊗ \Bt$.
Since $λ$~is stable, $λ'$~is also stable.
By the induction hypothesis,
$\trans{λ'} ⊗ ι: \Hom (V, E_{d-1,h}) → \Hom (B_{d-1}, \Bt^h)$~is surjective.
Since this map is analytic, its kernel~$V'$ is
a $h$-dimensional $\Qph$-vector space.
Multiplication in~$B^+\cris$ induces
an analytic map~$V' ⊗_{\Qp} E_{1,h} → \Hom (V, E_{d,h})$;
moreover, the image of this map is exactly
the kernel of~$\Hom (V, E_{d,h}) → \Hom (B_{d-1}(1), \Bt^h)$.
We get the following diagram:
\begin{equation}
\xymatrix{
0 \ar[r] & V' ⊗_{\Qp} E_{1,h} \ar[r]\ar[d]
& \Hom (V, E_{d,h})\ar[r]\ar[d]^{\trans{λ} ⊗ ι}
& \Hom (B_{d-1}(1), \Bt^h) \ar[r]\ar@{=}[d] & 0
\\
0 \ar[r] & \Hom (\Cp, \Bt^h) \ar[r] & \Hom (B_d, \Bt^h)\ar[r]
& \Hom (B_{d-1}(1), \Bt^h)\ar[r] & 0
}
\end{equation}
By counting dimensions and heights, we see that the first line is exact.
The second line is exact because $\Bt$~is an injective $B^+\dR$-module.

By Lemma~\ref{lem:ladder2}, the image of~$\trans{λ} ⊗ ι$ contains~$\Hom
(\Cp, \Bt^h)$; therefore, the left vertical map is surjective.
This proves that the map~$\trans{λ} ⊗ ι$~is surjective.
\end{proof}

\begin{prop}\label{prop:stable}
Let~$X$ be a stable oblique Banach-Colmez space. Then $X$~is isomorphic
to~$E_{d,h}$, where $d = \dim X$ and $h = \haut X$.
\end{prop}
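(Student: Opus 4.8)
The plan is to put~$X$ in standard extension form, use the surjectivity statement of Proposition~\ref{prop:lf-stable} to produce a morphism between~$E_{d,h}$ and~$X$ that becomes an isomorphism after applying the fibre functor~$\eta$, and then to invoke axiom~(HN2) of the Harder--Narasimhan structure to upgrade this to a genuine isomorphism.

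First I would normalise~$X$. Being oblique, $X$~is constructible, so by Proposition~\ref{prop:constructible} together with Proposition~\ref{prop:ext1} it is isomorphic to~$X(\lambda)$ for a $B^+\dR$-linear map~$\lambda\colon M \to V \otimes_{\Qp} \Bt$, where $M$~is a finite-length $B^+\dR$-module of length~$d = \dim X$ and $V$~is a $\Qp$-vector space of dimension~$h = \haut X$. Obliqueness is exactly the injectivity of~$\lambda$, and stability of~$X$ says precisely that $\lambda$~is a stable map, in the sense introduced before Lemma~\ref{lem:ladder1}. From the exact sequence~$0 \to V \otimes_{\Qp} B^+\dR \to X^+\dR \to M \to 0$ one reads off that $X^+\dR$~is free of rank~$h$, so in the category~$\BCO$ the object~$X$ has rank~$h$, degree~$d(X) = \dim X = d$, and slope~$\mu(X) = d/h$. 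By Proposition~\ref{prop:ED-BC}, the target~$E_{d,h} = E(D_{d,h})$ is oblique with the very same dimension~$d$, height~$h$, rank~$h$, and hence the same slope.

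The heart of the matter is Proposition~\ref{prop:lf-stable}: writing $\iota\colon E_{d,h} \to (B^+\dR)^h$ for the envelope map, the map~$\trans{\lambda} \otimes \iota\colon \Hom(V, E_{d,h}) \to \Hom(M, \Bt^h)$ is surjective. Using Proposition~\ref{prop:ext1} to identify the target with~$\Ext^1_{\BC}(M, V)$, I would lift the relevant $B^+\dR$-linear datum through this surjection to an element~$f \in \Hom(V, E_{d,h})$. Unwinding the fibre-product description of~$X(\lambda)$ and of~$E_{d,h}$, and passing through Proposition~\ref{prop:Hom-dR}, this~$f$ should assemble (via its effect on the two $\dR$-envelopes) into an analytic morphism~$g\colon E_{d,h} \to X$. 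The role of the stability of~$\lambda$ is to guarantee that the induced $B\dR$-linear map~$\eta(g) = g\dR\colon (E_{d,h})\dR \to X\dR$ between the two rank-$h$ spaces has full rank, i.e. is an isomorphism, rather than degenerating.

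Once~$\eta(g)$ is known to be an isomorphism, the conclusion is formal: since $d(E_{d,h}) = d = d(X)$, axiom~(HN2) (Proposition~\ref{prop:oblique-hn}) forces~$g$ itself to be an isomorphism, so~$X \cong E_{d,h}$. The main obstacle is thus the construction in the previous paragraph: turning the abstract surjectivity of~$\trans{\lambda} \otimes \iota$ into an actual morphism of Banach--Colmez spaces. Concretely, one must verify that the lifted datum~$f$ is compatible with both extension structures, so that it defines an analytic map through the criterion of Proposition~\ref{prop:Hom-dR}, and that stability indeed prevents~$\eta(g)$ from dropping rank; the degree count and the final appeal to~(HN2) are then immediate.
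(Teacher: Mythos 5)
Your outline gets the bookkeeping right: the normalisation $X \cong E(\lambda)$ via Proposition~\ref{prop:ext1}, the identification of Proposition~\ref{prop:lf-stable} as the key input, and the closing count of degrees/heights are all in line with the paper. But the step you yourself flag as ``the main obstacle'' is where the argument breaks, and it breaks because the mechanism you propose is the wrong one: the surjectivity of $\trans{\lambda} \otimes \iota$ is \emph{not} used to lift the extension class $\lambda$, and a preimage of $\lambda$ does not assemble into a morphism between $E_{d,h}$ and $X$. What produces a morphism is an element of the \emph{kernel} of $\trans{\lambda} \otimes \iota$. Indeed, any $a \in \Hom(V, E_{d,h})$ defines a pairing $V \otimes_{\Qp} \Be \to B\cris$, $v \otimes b \mapsto b\,a(v)$, which one restricts to $X = E(\lambda) \subset V \otimes_{\Qp} \Be$; the values automatically satisfy $(\varphi^h - p^d)(y) = 0$, but a priori they lie only in $\Fil^{-d}B\cris$, and the vanishing $(\trans{\lambda} \otimes \iota)(a) = 0$ is exactly the condition that forces them into $B^+\cris$ (via~\cite[5.3.7(i)]{Fontaine1994Corps}), i.e.\ into $E_{d,h}$. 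Already for $d = h = 1$ the difference is visible: there the kernel is $\Qp t$, and multiplication by $t$ gives the isomorphism $X \cong E_1$ (the twisted fundamental exact sequence), whereas a lift of $\lambda$ is an element $a \in E_1$ with $\theta(a) \neq 0$, and multiplication by it sends $X$ into $t^{-1}E_2 \supsetneq E_1$, not into $E_1$.

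Consequently the actual role of the surjectivity in the paper is numerical: counting dimensions and heights, surjectivity forces $\Ker(\trans{\lambda} \otimes \iota)$ to have dimension $0$ and height $h^2$, hence to be a free rank-one module over the division algebra $L_{d/h}$ of invariant $d/h$ acting on $\Hom(V, E_{d,h})$ through $E_{d,h}$. Choosing a generator $a = (a_1, \dots, a_h)$ of this line is also what supplies the non-degeneracy that you attribute, without argument, to ``stability should prevent $\eta(g)$ from dropping rank'': the generator property makes the determinant $\det(\varphi^r a_i)$ a unit in $\mathbb{Z}_p(d)$, whence injectivity of the resulting map $E(\lambda) \to E_{d,h}$, and a final dimension/height count upgrades this to an isomorphism (your appeal to (HN2) would serve equally well at that last step). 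None of the three essential points --- construction of the map from a kernel generator, integrality of its values in $B^+\cris$, and non-degeneracy via the $L_{d/h}$-module structure --- can be recovered from the lifting strategy you describe, so the proposal has a genuine gap precisely at its central step.
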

\begin{proof}
The proof mirrors that of~\cite[Prop.~2.3.2]{banach}. By
Prop.~\ref{prop:ext1}, we know that there exists a $B^+\dR$-linear
map~$λ: M → V ⊗_{\Qp} \Bt$ such that $X$~is isomorphic to~$E(λ)$.
By Prop.~\ref{prop:lf-stable}, $\trans{λ} ⊗ ι: \Hom (V, E_{d,h}) → \Hom
(M, \Bt^h)$~is surjective. Let~$L_{d/h}$ be the division algebra
over~$\Qp$ with Brauer invariant~$d/h$; then $\trans{λ} ⊗ ι$~is
left $L_{d/h}$-linear. By counting dimensions and heights, its kernel has
dimension zero and height~$h^2$ and is therefore a line over~$L_{d/h}$.
Let~$a$ be a generator of~$\Ker \trans{λ} ⊗ ı$ over~$L_{d/h}$.

Define~$f: V ⊗ \Be → B\cris$ by~$f(v ⊗ b) = b\: a(v)$. We readily check
that~$(φ^h - p^d) ∘ f = 0$. To show that $f: E(λ) → E_{d,h}$~is an
isomorphism, it is enough to prove that~$f(E(λ)) ⊂ B^+\cris$ and $f: E(λ)
→ E_{d,h}$~is surjective.

Let~$x ∈ E(λ)$ and~$y = f(x) ∈ \Fil^{-d} B\cris$. Since~$(\trans{λ} ⊗
ι)(a) = 0$, we have for all~$r ∈ ℕ$: $φ^r(t^d y) ≡ 0 \pmod{\Fil^d
B^+\dR}$. Therefore, for all~$r$, $φ^r(y) = t^{-d} p^{-d} φ^r(t^d y) ∈
\Fil^0 B\cris$. By~\cite[5.3.7~(i)]{Fontaine1994Corps}, this implies that~$φ^2(y)
∈ B^+\cris$ and therefore~$y ∈ B^+\cris$.

By taking coordinates in~$V$, we may write~$a = (a_1,…,a_h)$ with~$a_i ∈
E_{d,h}$. Since $a$~generates $\Ker(\trans{λ} ⊗ ι)$ as a left
$L_{d/h}$-module, the determinant $\det(φ^r a_i)$~is an unit in~$ℤ_p(d)$.
Therefore, the map~$f: E(λ) → B^+\cris$ is injective. By counting
dimensions and heights, it follows that $f: E(λ) → E_{d,h}$~is surjective
and therefore an analytic isomorphism.
\end{proof}

\begin{prop}\label{prop:ext1-ED}
Let~$0 ≤ d ≤ d'$ and~$h ≥ 1$ be integers. Then $\Ext^1_{\BC} (E_{d',h},
E_{d,h}) = 0$.
\end{prop}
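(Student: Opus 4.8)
The plan is to realise a given extension $0 → E_{d,h} → X → E_{d',h} → 0$ of Banach-Colmez spaces as split, by descending to it a $B^+\dR$-linear splitting of the underlying module extension; the whole difficulty is to arrange that this module splitting respects the Banach-Colmez subspaces.

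First I would check that $X$ is oblique. Applying the exact functor $Y ↦ Y^+\dR$ of Proposition~\ref{prop:dR-ex-fid} to the extension gives a short exact sequence $0 → (E_{d,h})^+\dR → X^+\dR → (E_{d',h})^+\dR → 0$ of $B^+\dR$-modules. Since $E_{d,h}$ and $E_{d',h}$ are oblique, $(E_{d,h})^+\dR \cong (B^+\dR)^h \cong (E_{d',h})^+\dR$ are free; hence $X^+\dR$ is free of rank~$2h$ and $X$ is oblique. Moreover $(E_{d',h})^+\dR$ is projective, so this sequence of modules admits a $B^+\dR$-linear section $s_0 : (E_{d',h})^+\dR → X^+\dR$.

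Next I would reduce the splitting to a correction problem. By Proposition~\ref{prop:Hom-dR}, a splitting of the extension in $\BC$ amounts to a $B^+\dR$-linear section $s$ of $X^+\dR → (E_{d',h})^+\dR$ such that $s(E_{d',h}) ⊂ X$. Using $X ⊂ X^+\dR$, the failure of $s_0$ to land in $X$ is recorded by the analytic map $δ : E_{d',h} → X^+\dR/X$ obtained by restricting $s_0$ to $E_{d',h}$ and reducing modulo $X$; since $s_0$ is a module section, $δ$ factors through the summand $(E_{d,h})^+\dR/E_{d,h}$. Replacing $s_0$ by $s_0 + φ$ for a $B^+\dR$-linear map $φ : (E_{d',h})^+\dR → (E_{d,h})^+\dR$ changes $δ$ by $π ∘ φ|_{E_{d',h}}$, where $π$ is the projection onto $(E_{d,h})^+\dR/E_{d,h}$. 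It therefore suffices to prove that the correction map
\[ \Hom_{B^+\dR}\big((E_{d',h})^+\dR, (E_{d,h})^+\dR\big) \longrightarrow \Hom_{\BC}\big(E_{d',h}, (E_{d,h})^+\dR/E_{d,h}\big), \qquad φ ↦ π ∘ φ|_{E_{d',h}}, \]
is surjective: some $φ$ then kills $δ$, and $s = s_0 + φ$ is the required section.

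The surjectivity of this correction map is the heart of the matter. Writing $E_{d,h} = X(λ)$ and $E_{d',h} = X(λ')$ through the stable presentations of Proposition~\ref{prop:stable}, with envelope maps $ι$ and $ι'$, the quotient $(E_{d,h})^+\dR/E_{d,h}$ is described by the cokernel of $ι$, and under the identifications $(E_{d,h})^+\dR \cong (B^+\dR)^h$ the correction map takes exactly the form $\trans{λ} ⊗ ι$ of Proposition~\ref{prop:lf-stable} and of Lemmas~\ref{lem:ladder1} and~\ref{lem:ladder2}. Its surjectivity is then precisely the content of those results. This is where the hypothesis $0 ≤ d ≤ d'$ is used: it makes the length~$d$ of the target at most the degree~$d'$ controlling the source — equivalently $μ(E_{d,h}) ≤ μ(E_{d',h})$ for the Harder-Narasimhan structure of Proposition~\ref{prop:oblique-hn} — so that the fundamental-lemma input of Proposition~\ref{prop:lf-stable} applies and the map is onto. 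I expect the main obstacle to be exactly this identification of the correction map with a $\trans{λ} ⊗ ι$ and the attendant surjectivity; once it is secured, every extension splits and $\Ext^1_{\BC}(E_{d',h}, E_{d,h}) = 0$.
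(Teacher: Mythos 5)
Your opening reductions are mostly sound (obliqueness of~$X$, existence of a $B^+\dR$-linear section~$s_0$, and the reformulation via Proposition~\ref{prop:Hom-dR} of splitting as finding a section~$s$ with $s(E_{d',h}) \subset X$), modulo two points that need care: you must prove $X$ is \emph{constructible} before you may invoke the exactness of $Y \mapsto Y^+\dR$ from Proposition~\ref{prop:dR-ex-fid} (this does follow, from projectivity of \'etale spaces together with Proposition~\ref{prop:ext-dR}), and the target $(E_{d,h})^+\dR/E_{d,h}$ of your obstruction~$\delta$ is not a Banach-Colmez space, so ``$\Hom_{\BC}$'' into it is not defined as such. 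The fatal gap is the last step. The correction map does \emph{not} ``take exactly the form $\trans{\lambda}\otimes\iota$'': its source is the $B^+\dR$-module $\Hom_{B^+\dR}\bigl((E_{d',h})^+\dR,(E_{d,h})^+\dR\bigr)$, whereas the source in Proposition~\ref{prop:lf-stable} is the Banach-Colmez space $\Hom_{\Qp}(V,E_{d,h})$; and, more importantly, Proposition~\ref{prop:lf-stable} and Lemmas~\ref{lem:ladder1}--\ref{lem:ladder2} are \emph{equal-slope} statements, in which the stable presentation data $(M,V,\lambda)$ and the model space $E_{d,h}$ carry the same pair $(d,h)$. Your problem mixes the two slopes $d/h$ and $d'/h$, a case no result of the paper covers. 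Indeed the mixed-slope surjectivity your correction would require, that of
\[ \trans{\lambda'}\otimes\iota \colon \Hom_{\Qp}(V',E_{d,h}) \longrightarrow \Hom_{B^+\dR}(M',\Bt^h), \qquad M' \text{ of length } d', \]
is \emph{impossible} when $d<d'$: the source is a Banach-Colmez space of dimension~$hd$, the target one of dimension~$hd'$, and a surjection in~$\BC$ cannot increase dimension. So your remark that the hypothesis $d\leq d'$ is what ``makes the fundamental-lemma input apply'' is exactly backwards.

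This is not an accident of your method. With the standard convention --- the one the paper itself uses in Propositions~\ref{prop:ext1}, \ref{prop:ext-dR} and~\ref{prop:dR-ex-fid} --- that $\Ext^1_{\BC}(A,B)$ classifies extensions $0\to B\to X\to A\to 0$, the statement you set out to prove is \emph{false} for $d<d'$. Take $d=0$, $d'=1$, $h=1$: by Proposition~\ref{prop:hom-ED} there is a nonzero analytic map $g\colon E_{1,2}\to E_{1,1}$ (explicitly $g(x)=\varphi(b)\,x+b\,\varphi(x)$ for a fixed nonzero $b\in E_{1,2}$); since $E_{1,2}$ is connected (it is the space attached to the $p$-divisible group of slope~$1/2$, cf.\ the proof of Proposition~\ref{prop:ED-BC}), $g$ is surjective, and counting dimensions and heights shows its kernel is a one-dimensional $\Qp$-vector space. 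The exact sequence $0\to\Qp\to E_{1,2}\to E_{1,1}\to 0$ is then a non-split extension of $E_{1,1}$ by $E_{0,1}=\Qp$ (a splitting would give the connected space $E_{1,2}$ a nonzero \'etale quotient). Hence the proposition must be read with the roles of sub and quotient exchanged (equivalently, with $d\geq d'$): it is extensions whose \emph{sub}object has the larger slope that split --- which is precisely the orientation needed for splitting the Harder-Narasimhan filtration in Theorem~\ref{thm:fil}(iv), and the one the paper's own proof addresses by computing Hom groups through Proposition~\ref{prop:hom-ED} and counting dimensions and heights along the long exact $\Ext$ sequences in~$\BC$. Your splitting-and-correction strategy is only salvageable in that orientation (the obstruction then maps $E_{d,h}$ into $(E_{d',h})^+\dR/E_{d',h}$, and the dimension counts become favorable), but even then the required surjectivity needs an argument that the results you cite do not supply.
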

\begin{proof}
(\cite[Proposition 9.4.4]{These})
This result follows from Prop.~\ref{prop:hom-ED} and counting dimensions
and heights on the long $\Ext^1$ sequence in the abelian category~$\BC$.
\end{proof}

\begin{thm}\label{thm:fil}
Let $X$~be an oblique Banach-Colmez space. Let~$ℚ^+ = [0,∞[ ∩ ℚ$.
\begin{enumerate}
\item
$X$~has a unique decreasing filtration~$(\Fil^{α} X)_{α ∈ ℚ^+}$,
where $\Gr^{α} X = \Fil^{α} X / \limi_{β > α} \Fil^{β} X$
is semi-stable of slope~$α$.
\item For all~$α = d/h ∈ ℚ^+$, there exists a unique integer~$n_{α}(X)$
such that~$\Gr^{α} X$ is (non-canonically) isomorphic
to~$E_{d,h}^{n_{α}(X)}$.
\item For any analytic map~$f: X → X'$ and for all~$α ∈ ℚ^+$,
we have~$f(\Fil^{α} X) ⊂ \Fil^{α} X'$.
\item The filtration~$\Fil$ is (non-canonically) split.
\end{enumerate}
\end{thm}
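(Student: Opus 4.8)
The plan is to read off all four assertions from the Harder--Narasimhan formalism already installed on $\BCO$ by Proposition~\ref{prop:oblique-hn}, feeding in the classification of stable objects (Proposition~\ref{prop:stable}) and the extension vanishing of Proposition~\ref{prop:ext1-ED}. For part~(i) I would simply transport the slope filtration constructed in a general Harder--Narasimhan category. By Proposition~\ref{prop:oblique-hn}, $\BCO$ with $\eta(X)=X\dR$, rank $=\dim_{B\dR}$ and degree $=\dim$ is a Harder--Narasimhan category, so Proposition~\ref{prop:fil-existe} endows every nonzero $X$ with a unique Harder--Narasimhan filtration, and the associated decreasing filtration $(\Fil^{\alpha}X)$ has graded pieces $\Gr^{\alpha}X$ semi-stable of slope $\alpha$. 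Since $\dim X$ and $\dim_{B\dR}X\dR$ are non-negative integers with the latter positive on nonzero objects, every slope lies in $\mathbb{Q}^{+}$; this gives both the indexing by $\alpha\in\mathbb{Q}^{+}$ and the uniqueness.

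For part~(ii) the task is to identify the semi-stable pieces. I would first show that a semi-stable oblique space $Y$ of slope $\alpha=d/h$ in lowest terms is an iterated extension of stable objects of slope $\alpha$: a nonzero strict subobject of slope $\alpha$ and minimal rank is automatically stable, its quotient is again semi-stable of slope $\alpha$, and one induces on the rank. Each such stable object is isomorphic to $E_{d,h}$ by Proposition~\ref{prop:stable}, the coprimality of $(d,h)$ being forced by stability since $E_{kd,kh}$ decomposes. The successive extensions then split because $\Ext^{1}_{\BC}(E_{d,h},E_{d,h})=0$, which is the case $d=d'$ of Proposition~\ref{prop:ext1-ED}; hence $\Gr^{\alpha}X\cong E_{d,h}^{\,n_{\alpha}(X)}$. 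The integer $n_{\alpha}(X)$ is unique because $\haut E_{d,h}=h\ge 1$, so $n_{\alpha}(X)=\haut(\Gr^{\alpha}X)/h$.

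Part~(iii) is the functoriality of the Harder--Narasimhan filtration, which I would deduce from the vanishing of $\Hom$ between semi-stable objects of decreasing slope, exactly as in \cite{HN1974} and in the spirit of Lemma~\ref{lem:sscs}. Concretely, for analytic $f\colon X\to X'$ the composite $\Fil^{\alpha}X\to X'\to X'/\Fil^{\alpha}X'$ is a morphism from an object all of whose slopes are $\ge\alpha$ to one all of whose slopes are $<\alpha$; dévissage along the graded pieces (each $\cong E_{d,h}$) together with the slope inequality forces it to vanish, so $f(\Fil^{\alpha}X)\subset\Fil^{\alpha}X'$.

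The real work is part~(iv), and this is where I expect the main obstacle. It amounts to splitting the two-step extensions $0\to\Fil^{\beta}X\to\Fil^{\alpha}X\to\Gr^{\alpha}X\to 0$ assembling the filtration, which by induction on the number of slopes reduces to splitting $0\to F\to X\to Q\to 0$ with $F$ of the top slope and $Q$ of strictly lower slope. I would build a retraction through the envelope: since $F$ and $Q$ are oblique, Proposition~\ref{prop:dR-ex-fid} makes $(-)^{+}\dR$ exact, so $F^{+}\dR$ is a free direct summand of $X^{+}\dR$; picking a $B^{+}\dR$-linear retraction and, via Proposition~\ref{prop:Hom-dR}, adjusting it to carry $X$ into $F$ is controlled by an extension class between the two graded pieces. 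The vanishing of that class is the crux, and I would reduce the two pieces to a common height through the isoclinic identifications $E_{kd,kh}\cong E_{d,h}^{\,k}$ in order to apply Proposition~\ref{prop:ext1-ED}. Securing this $\Ext^{1}$-vanishing in the correct direction between pieces of \emph{different} heights is the delicate point; once a single step splits, the full non-canonical splitting follows by induction.
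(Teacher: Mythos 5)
Parts (i)--(iii) of your proposal run essentially as in the paper. For (i) and (ii) the paper does exactly what you do: Propositions~\ref{prop:oblique-hn} and~\ref{prop:fil-existe} give existence and uniqueness of the filtration, then Proposition~\ref{prop:stable} identifies the stable pieces and the case $d=d'$ of Proposition~\ref{prop:ext1-ED} splits a semi-stable piece into copies of $E_{d,h}$. For (iii) the paper cites Proposition~\ref{prop:hom-ED}, i.e.\ the computation $\Hom_{\BC}(E(D'),E(D))=\Hom_{K_0[\varphi]}(D,D'\otimes_{K_0}B^+\cris)$, which kills $\Hom(E_{d,h},E_{d',h'})$ when $d/h>d'/h'$ because $B^+\cris$ has no negative slopes; you instead derive the same $\Hom$-vanishing formally from axioms (HN1)--(HN4) (note you need the graph trick to write an arbitrary morphism of $\BCO$ as a strict mono followed by a strict epi, so that (HN4) applies) and then do the same d\'evissage. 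Both routes work; yours is marginally more self-contained.

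Concerning (iv): your reduction --- induction on the number of slopes, splitting one step $0\to F\to X\to Q\to 0$ with $\mu(F)>\mu(Q)$, after equalizing heights via $E_{kd,kh}\cong E_{d,h}^{k}$ --- is precisely what the paper's one-line citation of Proposition~\ref{prop:ext1-ED} presupposes, and the ``delicate point'' you flag is genuine, but it is a defect of the \emph{printed statement} of Proposition~\ref{prop:ext1-ED}, not of your strategy. What the splitting needs, after equalizing heights, is $\Ext^1_{\BC}(E_{d',H},E_{d,H})=0$ with $d'<d$ (sub of larger slope, quotient of smaller slope). The proposition as printed asserts the vanishing for $0\le d\le d'$, which in the standard convention $\Ext^1(A,B)=\{\,0\to B\to X\to A\to 0\,\}$ --- the convention the paper itself uses in Proposition~\ref{prop:ext1} and in the proof of Proposition~\ref{prop:dR-ex-fid} --- is the opposite direction, and is in fact false: applying $\Hom_{\BC}(-,\Qp)$ to $0\to\Qp(1)\to E_{1,1}\to\Cp\to 0$, and using $\Ext^1_{\BC}(\Qp(1),\Qp)=0$ together with $\Ext^1_{\BC}(\Cp,\Qp)\cong\Cp(-1)$ from Proposition~\ref{prop:ext1}, shows that $\Ext^1_{\BC}(E_{1,1},\Qp)$ is infinite-dimensional; concretely, $0\to\Qp\to E_{1,2}\to E_{1,1}\to 0$ is a non-split extension, $E_{1,2}$ being connected while $\Qp\oplus E_{1,1}$ is not. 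So Proposition~\ref{prop:ext1-ED} must be read with the inequality (equivalently the variance of $\Ext$) reversed: extensions of a smaller-slope object by a larger-slope object split --- exactly the direction you need. Granting that corrected reading, your height-equalization argument closes part (iv) and your proof coincides with the paper's; your fallback construction of a retraction through $X^+\dR$ is then unnecessary.
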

\begin{proof}
(i) follows from Props.~\ref{prop:oblique-hn} and~\ref{prop:fil-existe}.

(ii) The stable case follows from Prop.~\ref{prop:stable}, and the
semi-stable case from~$\Ext^1_{\BC} (E_{d,h}, E_{d,h}) = 0$
(Prop.~\ref{prop:ext1-ED}).

(iii) follows from Prop.~\ref{prop:hom-ED}.

(iv) follows from Prop.~\ref{prop:ext1-ED}.
\end{proof}

\bibliographystyle{alpha}
\bibliography{biblio}

\noindent
Written at Université Versailles--Saint-Quentin.\\\noindent
Permanent e-mail address: \texttt{jerome.plut@normalesup.org}
\end{document}